\numberwithin{equation}{section}
\def\@tocline#1#2#3#4#5#6#7{\relax
  \ifnum #1>\c@tocdepth 
  \else
    \par \addpenalty\@secpenalty\addvspace{#2}%
    \begingroup \hyphenpenalty\@M
    \@ifempty{#4}{%
      \@tempdima\csname r@tocindent\number#1\endcsname\relax
    }{%
      \@tempdima#4\relax
    }%
    \parindent\z@ \leftskip#3\relax \advance\leftskip\@tempdima\relax
    \rightskip\@pnumwidth plus4em \parfillskip-\@pnumwidth
    #5\leavevmode\hskip-\@tempdima
      \ifcase #1
       \or\or \hskip 1em \or \hskip 2em \else \hskip 3em \fi%
      #6\nobreak\relax
    \dotfill\hbox to\@pnumwidth{\@tocpagenum{#7}}\par
    \nobreak
    \endgroup
  \fi}
\definecolor{Gray}{gray}{0.9}
\pgfplotsset{compat=1.8}
\definecolor{rulecolor}{RGB}{0,71,171}
\definecolor{tableheadcolor}{gray}{0.92}
\newtheorem{theorem}{Theorem}[section]
\newtheorem{lemma}[theorem]{Lemma}
\newtheorem{proposition}[theorem]{Proposition}
\newtheorem{corollary}[theorem]{Corollary}
\theoremstyle{definition}
\newtheorem{definition}[theorem]{Definition}
\newtheorem{remark}[theorem]{Remark}
\newtheorem{assumption}[theorem]{Assumption} 
\newcommand{\eye}{\mathbf{1}}
\newcommand{\sgn}{\mathrm{sgn}}
\newcommand{\Langle}{\left \langle}
\newcommand{\Rangle}{\right \rangle}
\newcommand{\cF}{\mathcal{F}}
\newcommand{\cD}{\mathcal{D}}
\newcommand{\cM}{\mathcal{M}}
\newcommand{\cH}{\mathcal{H}}
\newcommand{\Ccal}{\mathcal{C}}
\newcommand{\Dcal}{\mathcal{D}}
\newcommand{\Fcal}{\mathcal{F}}
\newcommand{\Hcal}{\mathcal{H}}
\newcommand{\Mcal}{\mathcal{M}}
\newcommand{\Sbb}{\mathbb{S}}
\DeclareMathOperator{\id}{id}
\DeclareMathOperator{\diam}{diam}
\DeclareMathOperator*{\wslim}{w*-lim}
\DeclareMathOperator{\diverg}{div}
\DeclareMathOperator{\dist}{dist}
\DeclareMathOperator{\Tan}{Tan}
\DeclareMathOperator{\supp}{supp}
\newcommand{\N}{\mathbb{N}}
\newcommand{\R}{\mathbb{R}}
\newcommand{\loc}{\mathrm{loc}}
\newcommand{\spt}{\mathrm{spt} \,}
\newcommand{\toweakstar}{\overset{*}\rightharpoonup}
\newcommand{\eps}{\varepsilon}
\newcommand{\vphi}{\varphi}
\renewcommand*\env@matrix[1][*\c@MaxMatrixCols c]{%
    \hskip -\arraycolsep
    \let\@ifnextchar\new@ifnextchar
    \array{#1}}
\DeclareMathOperator{\Lip}{Lip}
\newcommand{\mres}{\mathbin{\vrule height 1.6ex depth 0pt width
        0.13ex\vrule height 0.13ex depth 0pt width 1.3ex}}
\newcommand{\restr}{\mathbin{\vrule height 1.6ex depth 0pt width
        0.13ex\vrule height 0.13ex depth 0pt width 1.3ex}}
\DeclareMathOperator{\Id}{id}
\newcommand{\divr}{\mathrm{div}}
\def\vint_#1{\mathchoice%
    {\mathop{\kern 0.2em\vrule width 0.6em height 0.69678ex depth -0.58065ex
            \kern -0.8em \intop}\nolimits_{\kern -0.4em#1}}%
    {\mathop{\kern 0.1em\vrule width 0.5em height 0.69678ex depth -0.60387ex
            \kern -0.6em \intop}\nolimits_{#1}}%
    {\mathop{\kern 0.1em\vrule width 0.5em height 0.69678ex depth -0.60387ex
            \kern -0.6em \intop}\nolimits_{#1}}%
    {\mathop{\kern 0.1em\vrule width 0.5em height 0.69678ex depth -0.60387ex
            \kern -0.6em \intop}\nolimits_{#1}}}
\newcommand*{\RangeX}{%
    {%
        \mathpalette\@RangeOf{X}%
    }%
}
\newcommand*{\@RangeOf}[2]{%
    \sbox0{$\m@th#1\mathsf{#2}$}%
    \mathsf{#2}%
    \kern-\wd0 %
    \mkern2.75mu\relax
    \nonscript\mkern.25mu\relax
    \mathsf{#2}%
}
\newcommand{\aveint}[2]{\mathchoice%
    {\mathop{\kern 0.2em\vrule width 0.6em height 0.69678ex depth -0.58065ex
            \kern -0.8em \intop}\nolimits_{\kern -0.45em#1}^{#2}}%
    {\mathop{\kern 0.1em\vrule width 0.5em height 0.69678ex depth -0.60387ex
            \kern -0.6em \intop}\nolimits_{#1}^{#2}}%
    {\mathop{\kern 0.1em\vrule width 0.5em height 0.69678ex depth -0.60387ex
            \kern -0.6em \intop}\nolimits_{#1}^{#2}}%
    {\mathop{\kern 0.1em\vrule width 0.5em height 0.69678ex depth -0.60387ex
            \kern -0.6em \intop}\nolimits_{#1}^{#2}}}
\title{On the geometric properties of multi-operator two-phase elliptic measure}
\author[M. Goering]{Max Goering}
\address{Affiliation: Department of Mathematics and Statistics, University of Jyväskylä, Finland. \newline
\indent Address: Max Goering, Department of Mathematics and Computer Science, Technical University of Eindhoven,   5600 MB Eindhoven, The Netherlands}
\email{mlgoering@gmail.com}
\author[A. Skorobogatova]{Anna Skorobogatova}
\address{Anna Skorobogatova, Institute for Theoretical Studies, ETH Z\"{u}rich, Scheuchzerstrasse 70
8092 Z\"{urich}, Switzerland}
\email{anna.skorobogatova@eth-its.ethz.ch}
\begin{document}

\begin{abstract}
    We provide a structural characterization of a given boundary using two-phase elliptic measure in a multi-operator setting, extending to this {novel} setting results of Kenig, Preiss \& Toro, Toro \& Zhao and Azzam \& Mourgoglou, including {the validity of Oksendal's conjecture for two-sided NTA domains} under the assumption of mutual absolute continuity of the elliptic measures. Our techniques rely on a reduction to a multi-operator two-phase free-boundary problem combined with an extension of the powerful tools introduced by Preiss in his Density Theorem.
\end{abstract}

\maketitle

\tableofcontents

\section{Intro}
\subsection{The multi-operator setting and new results}
In this work, we study the structure of the points of mutual absolute continuity for a pair of elliptic measures $\omega^{\pm}$ corresponding to two \emph{different} divergence-form operators on complementary NTA domains $\Omega^{+}$ and $\Omega^- = \R^n \setminus \overline{\Omega^+}$. Identifying the relationship between the structure of a boundary and the behavior of its elliptic measure has a long history, starting with the classical case of harmonic measure, i.e. when the operator in question is the Laplacian. We provide an overview of the history of the problem in Section \ref{ss:background} below. Our setting herein falls into the regime of \emph{two-phase} elliptic measure problems. In contrast to the one-phase setting, one does not relate the behavior of the elliptic measure and surface measure of the boundary, but instead considers the relative behavior between the two elliptic measures for the complementary domains.

More precisely, our setting is as follows. Consider complementary domains $\Omega^+ \subset \R^n$ and  $\Omega^- = \R^n \setminus \overline{\Omega^+}$, with $n\geq 3$. Suppose that $\Omega^+$ is a two-sided local NTA  domain (see Section \ref{s:prelim}). Recall that\footnote{See, for instance, \cite{KPT}.} if $\Omega^{\pm}$ are two-sided NTA with $\partial \Omega^+ = \partial \Omega^- =: \partial \Omega$ then the Dirichlet problem for an elliptic divergence-form operator with any continuous boundary data admits a solution in $W^{1,2}_{\loc}(\Omega^{+})\cap C(\overline{\Omega^+})$, and the elliptic measure associated to this Dirichlet problem is well-defined. For each choice of sign, let $L_{A^{\pm}} u = - \diverg(A^\pm \nabla u)$ be the second order linear elliptic differential operator with symmetric and uniformly elliptic coefficients $A^{\pm}$, where $A^{\pm}$ are $2$-quasicontinuous matrix-valued functions on $\R^{n}$. Recall that a function $f: \R^{n} \to \R$ is called $2$-quasicontinuous if for all $\eps > 0$ there exists an open set $V$ so that $\textrm{cap}_{2}(V) \le \eps$ and $f|_{\R^{n} \setminus V}$ is continuous, see \cite[Sections 4.7, 4.8]{EvansGariepy}. 

\begin{remark}
    We work with $2$-quasicontinuous functions $A^\pm : \R^{n} \to \R^{n \times n}$ precisely so that for $\omega^{\pm}$-a.e. $p$ we know $\lim_{r \to 0} \frac{1}{\omega^{\pm}(B(p,r))} \int |A^\pm - A^\pm(p)| d \omega^{\pm} = 0$, as soon as $\Omega^{\pm}$ are Wiener regular. See \cite[Theorem 11.14]{HKT} for more details. 
\end{remark}

\begin{remark}
    Since we expect tools from complex analysis will provide stronger answers with simpler techniques to the questions answered herein, we do not consider the case $n=2$ even though the results remain valid in that case under the assumption that the coefficients are continuous outside a set of zero logarithmic capacity (see for instance the difference between Theorem \ref{t:planardecomp} and Theorem \ref{t:kptdecomp}).
\end{remark}

Whenever $A^{\pm}$ and $\Omega^{\pm}$ are understood, we allow $\omega^{\pm}$ to denote the elliptic measures with suppressed poles $x^{\pm} \in \Omega^{\pm}$ (see Section \ref{s:prelim} for more details).

Recall that a non-zero Radon measure $\nu$ is a \emph{tangent measure} to $\mu$ at $a\in \R^n$, denoted $\nu \in \Tan(\mu,a)$, if there exists $c_{i} > 0$ and $r_{i} \downarrow 0$ so that the pushforwards $T_{a,r_{i}}[\mu]$ of $\mu$ under the maps $T_{a,r_i}(x) := \frac{x-a}{r_i}$ satisfy $\lim_{i \to \infty} c_{i} T_{a,r_{i}}[\mu] = \nu$ in the weak-$*$ sense \cite{Preiss}. We denote the space of $(n-1)$-dimensional flat measures in $\R^{n}$ by
    \begin{equation}\label{e:flat-meas}
        \cF = \{ c \cH^{n-1} \restr \pi \mid \pi \in G(n-1,n) \},
    \end{equation}
    where $G(m,n)$ denotes the Grassmannian of $m$-dimensional linear subspaces of $\R^n$.

Our main result is the following.

\begin{theorem}\label{t:decomp}
    Suppose that $\Omega^\pm$ {are complementary NTA domains with common boundary} $\partial\Omega$, {that $A^{\pm}:\R^{n} \to \R^{n \times n}$ are $2$-quasicontinuous and uniformly elliptic matrix-valued functions}, and that $L_A^{\pm}{= - \divr \left( A^{\pm} \nabla \cdot \right)}$ {on $\Omega^{\pm}$}. If
    \[
        F_{1} = \left\{y \in \partial \Omega : 0 < \frac{d\omega^-}{d\omega^+}(y) < + \infty\right\}\,,
    \]
    then there exists {an $\omega^
\pm$-}full measure subset of $F_{1}$, denoted $F^{*}$, so that 
    $\partial \Omega = F^{*} \sqcup S \sqcup N$
     and 
    \begin{itemize}
        \item[(i)] {$\Tan(\omega^{\pm},p) \subset \cF$ for all $p \in F^{*}$}, $\dim_\Hcal(F^*) {= n-1}$, and on $F^{*}$, $\omega^{+} \ll \omega^{-} \ll \omega^{+}$;
        \item[(ii)] $\omega^+ \perp \omega^-$ on $S$;
        \item[(iii)] $\omega^\pm(N) = 0$.
    \end{itemize}
\end{theorem}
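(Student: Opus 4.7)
The decomposition itself is largely formal. Writing the Lebesgue--Radon--Nikodym decomposition $\omega^- = h\,\omega^+ + \omega^{-,s}$ with $\omega^{-,s}\perp\omega^+$, one identifies (up to a joint $\omega^{\pm}$-null set $N$) the set $F_1$ with $\{0<h<+\infty\}$; setting $S:=\{h=0\}\cup\{h=+\infty\}\cup\supp(\omega^{-,s})$ then produces conclusions (ii) and (iii) together with mutual absolute continuity of $\omega^+$ and $\omega^-$ on all of $F_1$. The real content of the theorem is the claim in (i) that on a full $\omega^{\pm}$-measure subset $F^*\subseteq F_1$, all tangent measures are flat, and that this forces $\dim_\Hcal F^*\le n-1$.

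The plan for producing $F^*$ is a blow-up reduction to a two-phase free-boundary problem for a \emph{pair of distinct} constant-coefficient operators. At $\omega^{\pm}$-a.e. $p\in F_1$ one has simultaneously: the pointwise ratio $\omega^-(B(p,r))/\omega^+(B(p,r))\to h(p)\in(0,\infty)$ by standard differentiation theory; $p$ is a common $\omega^{\pm}$-Lebesgue point of the matrix fields $A^{\pm}$ by the $2$-quasicontinuity assumption together with the remark immediately following its introduction; and the two-sided NTA structure passes to all blow-ups. Along any sequence $r_i\downarrow 0$ and renormalizations $c_i$ with $c_iT_{p,r_i}[\omega^+]\toweakstar\nu^+$, one then also has $c_iT_{p,r_i}[\omega^-]\toweakstar h(p)\,\nu^+$, and standard arguments identify $\nu^{\pm}$ as elliptic measures for the constant-coefficient operators $L_{A^{\pm}(p)}$ on complementary two-sided NTA cones $\Omega_\infty^{\pm}$. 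Thus (i) reduces to the following free-boundary rigidity statement: whenever $\nu^{\pm}$ are elliptic measures for (possibly distinct) constant-coefficient symmetric uniformly elliptic operators on complementary NTA domains, satisfying $\nu^-=c\nu^+$ for some $c\in(0,\infty)$, one must have $\nu^{\pm}\in\cF$.

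The hard part is precisely this free-boundary rigidity: since one allows $A^+\neq A^-$, the classical Alt--Caffarelli--Friedman monotonicity formula does not apply directly and the single-operator two-phase results of Kenig--Preiss--Toro and Azzam--Mourgoglou are not off-the-shelf applicable. The strategy is to change variables by the positive square root of each $A^{\pm}(p)$, reducing each side to a harmonic measure on a (possibly mismatched) NTA cone in which the proportionality relation is preserved on the shared boundary, and then to iterate the tangent measure operation. Using the connectedness and cone structure of $\Tan(\nu^{\pm},q)$, together with the stability of $\cF$ under tangent operations, and carefully tracking how the two sides couple through the common boundary, one extends the machinery of Preiss's Density Theorem to conclude that every tangent measure to $\nu^{\pm}$ at every boundary point is already flat, which forces $\nu^{\pm}\in\cF$ globally.

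Granted the free-boundary step, $\omega^{\pm}\restr F^*$ has positive and finite $(n-1)$-density at $\omega^{\pm}$-a.e. point of $F^*$; the Preiss--Mattila rectifiability criterion then yields $(n-1)$-rectifiability of $\omega^{\pm}\restr F^*$ and in particular $\omega^{\pm}\restr F^* \ll \cH^{n-1}$, so that $\dim_\Hcal F^*\le n-1$, completing (i).
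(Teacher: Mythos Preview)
Your overall strategy matches the paper's, but there are two genuine gaps.

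\textbf{The free-boundary rigidity is misstated.} You reduce (i) to the claim that \emph{every} $\nu\in\cD(A^+(p),A^-(p))$ is flat. This is almost certainly false: already when $A^+=A^-=\Id$, the cone $\cD(\Id,\Id)$ contains measures supported on zero sets of harmonic polynomials of arbitrary degree, not just linear ones. The paper never proves (and does not need) such a global rigidity. What it actually establishes is a \emph{Liouville-type} statement (Corollary~\ref{c:flat-fb}): if $\nu\in\cD(\Id,M_p)$ is sufficiently close to $\cF$ at all large scales, then $\nu\in\cF$. This is exactly Property~\eqref{e:pi} for the pair $(\cF,\cD(\Id,M_p))$, and is obtained via Caffarelli's $\eps$-monotonicity $\Rightarrow$ Lipschitz $\Rightarrow$ $C^{1,\alpha}$ scheme for viscosity solutions of the two-phase problem, followed by a scaling argument. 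The connectedness lemma (Lemma~\ref{l:connectedness}) then gives the dichotomy $\Tan_\Lambda(\omega^\pm,p)\subset\cF$ or $\Tan_\Lambda(\omega^\pm,p)\cap\cF=\emptyset$, and the empty case is ruled out by producing \emph{one} flat tangent: pick any $\nu\in\Tan_\Lambda(\omega^\pm,p)$, find a touching ball at some $p_0\in\spt\nu$, apply Lemma~\ref{l:twoplane} to get $\Tan(\nu,p_0)\subset\cF$, and invoke ``tangents to tangents are tangents'' (Theorem~\ref{t:tan2ltan}). Your sketch gestures at connectedness and iterated tangents but never identifies Property~\eqref{e:pi} as the target, and your sentence ``every tangent measure to $\nu^\pm$ at every boundary point is already flat, which forces $\nu^\pm\in\cF$ globally'' is circular: you are trying to prove $\Tan(\omega^\pm,p)\subset\cF$, and knowing that tangents to $\nu$ lie back in $\Tan(\omega^\pm,p)$ buys nothing until the latter is known to be $\cF$.

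\textbf{The dimension argument is unjustified.} You claim that flat tangents give positive and finite $(n-1)$-density of $\omega^\pm\mres F^*$, hence rectifiability via Preiss--Mattila. But flat tangents are statements about \emph{normalized} blow-ups $c_iT_{p,r_i}[\omega^\pm]$; they say nothing about $\omega^\pm(B(p,r))/r^{n-1}$, which could tend to $0$ or $\infty$ while the normalized limits remain flat. The paper explicitly notes that rectifiability and $\omega^\pm\ll\cH^{n-1}$ are \emph{open} in this multi-operator setting (the harmonic case uses Riesz-transform machinery unavailable here). The correct argument is purely set-theoretic: flat tangents at every $p\in F^*$ force $\Theta_{\partial\Omega}(p,r)\to 0$, hence $\beta_{F^*}(p,r)\to 0$, and Lemma~\ref{l:beta-decay-dim} gives $\dim_\Hcal F^*\le n-1$ directly, with no density or rectifiability input.
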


Recent results \cite{david2021good,perstneva2025good} demonstrate that in the one-phase elliptic measure setting, sufficient regularity of the elliptic coefficients are necessary at the boundary for the relationship between geometry of a boundary and absolute continuity of the elliptic measure to hold. {Note that although in \cite{david2021good}, the coefficients of the operator are continuous up to the boundary, the setting therein is not a two-phase one since the boundary is a four-corner Cantor set. On the other hand, the snowflake examples constructed in \cite{perstneva2025good}, which are admissible in the two-phase setting, \emph{do not} have coefficients that are continuous up to the boundary; the oscillations degenerate as one approaches the boundary, see Remark 2 therein. In fact, a by-product of our result is that no such example can be constructed with coefficients that are 2-quasicontinuous, thus demonstrating an obstruction to how badly behaved the coefficients can be at the boundary in any example of this type.}

\subsection{Motivating background and history of harmonic measure}\label{ss:background}

{
The motivation for our study of the relationship between regularity properties of the two-phase multi-operator elliptic measures and structure of the boundary stem from powerful analogous results for two-phase harmonic measure, that is, the case where $A^\pm = \Id$. Let us briefly describe the history of this problem, starting with the story in the plane. The following is a combination of results due to Makarov, McMillan, Pomerenke, and Choi; see \cite{garnett2005harmonic} for the precise references.

\begin{theorem} \label{t:planardecomp}
    If $\partial \Omega\subset \R^2$ is a Jordan curve and $\omega$ the corresponding harmonic measure, then one can write $\partial \Omega$ as a disjoint union $\partial\Omega = G \sqcup S \sqcup N$, with
    \begin{itemize}
        \item[(1)] $\omega(N) = 0$;
        \item[(2)] $\cH^{1}(S) = 0$;
        \item[(3)] $\limsup_{r \to 0} r^{-1} \omega(B(p,r)) = + \infty$ and $\liminf_{r \to 0} r^{-1} \omega(B(p,r)) = 0$ for $\omega$-a.e. $p \in S$;
        \item[(4)] there is a geometric description of the points in $S$ as twist points\footnote{See \cite{garnett2005harmonic} for a definition of twist points.};
        \item[(5)] $\omega \ll \cH^{1} \ll \omega$ on $G$;
        \item[(6)] Every point in $G$ is a cone point for $\Omega$, that is, the vertex of a cone contained in $\Omega$;
        \item[(7)] $\omega$- a.e. and $\cH^{1}$- a.e. cone point for $\Omega$ is in $G$
        \item[(8)]  For $\omega$-a.e. $p \in G$, $\lim_{r \to 0} r^{-1} \omega(B(p,r))$ exists and takes values in $(0,\infty)$.     
    \end{itemize}
\end{theorem}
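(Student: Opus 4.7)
The plan is to reduce the entire decomposition to boundary behavior of the Riemann map and then invoke three classical ingredients: Carathéodory's extension theorem, McMillan's twist-point dichotomy, and Makarov's distortion and compression estimates. First, take the Riemann map $\phi: \mathbb{D} \to \Omega$. Since $\partial\Omega$ is a Jordan curve, Carathéodory's theorem extends $\phi$ to a homeomorphism $\overline{\mathbb{D}} \to \overline{\Omega}$, and normalizing so that $\phi(0)$ is the pole of $\omega$, we get $\omega = \phi_\ast \sigma$, where $\sigma$ is normalized arc length on $\partial\mathbb{D}$. Thus the sought decomposition of $\partial\Omega$ corresponds under $\phi$ to a measurable decomposition of $\partial\mathbb{D}$, and I can transfer all statements to the circle.

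Second, define $\tilde G \subset \partial\mathbb{D}$ as the set where $\phi$ admits a finite non-zero angular derivative $\phi'(\zeta)$, $\tilde S$ as the set of twist points (those $\zeta$ at which $\arg(\phi(r\zeta) - \phi(\zeta))$ is unbounded above and below as $r \to 1^-$), and $\tilde N = \partial\mathbb{D}\setminus(\tilde G \cup \tilde S)$; then set $G = \phi(\tilde G)$, $S = \phi(\tilde S)$, $N = \phi(\tilde N)$. McMillan's twist-point theorem (cf. \cite{garnett2005harmonic}) yields the dichotomy $\sigma(\tilde N) = 0$ and hence (1). On $\tilde G$, the classical boundary behavior of conformal maps at points with a finite non-zero angular derivative implies that $\phi(\zeta)$ is a cone point of $\Omega$ (giving (6)), that $\omega$ and $\cH^1$ are mutually absolutely continuous with Radon--Nikodym derivative governed by $|\phi'(\zeta)|$ (giving (5) and (8), with $\lim_{r\to 0} r^{-1}\omega(B(p,r))$ proportional to $|\phi'(\phi^{-1}(p))|$). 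Item (7) then follows by a Privalov/F.\ and M.\ Riesz uniqueness argument, ruling out a positive-$\omega$ or positive-$\cH^1$ set of cone points at which $\phi$ fails to have a non-zero angular derivative.

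For the twist set, I would invoke Makarov's quantitative estimates: at $\zeta \in \tilde S$ the image of the radial segment ending at $\zeta$ under $\phi$ spirals arbitrarily tightly and loosely around $\phi(\zeta)$, and translating this via the conformal identity $\omega = \phi_\ast \sigma$ yields both $\limsup_{r\to 0} r^{-1}\omega(B(p,r)) = +\infty$ and $\liminf_{r\to 0} r^{-1}\omega(B(p,r)) = 0$ for $\omega$-a.e.\ $p \in S$, establishing (3); the geometric description (4) is built into the definition of $\tilde S$ and transported by $\phi$. The negligibility $\cH^1(S) = 0$ of (2) then follows from the fact, implicit in the oscillation of (3), that a set on which the upper linear density of $\omega$ is infinite carries no $\cH^1$-mass; alternatively, one can deduce (2) from Makarov's theorem that harmonic measure on a simply connected domain restricted to the twist-point set is mutually singular with $\cH^1$.

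The main obstacle is precisely the twist-point analysis underlying (2) and (3). Upgrading the qualitative statement that $\phi(r\zeta)$ winds infinitely about $\phi(\zeta)$ into the sharp two-sided density oscillation (3), and deducing the $\cH^1$-annihilation in (2), is the technical heart of the classical proof: it requires the law-of-iterated-logarithm machinery of Makarov for $\log \phi'$ along hyperbolic geodesics, combined with the Koebe distortion theorem and the Beurling projection theorem to pass from bounds on $|\phi'|$ to bounds on $\omega$-mass of Euclidean balls.
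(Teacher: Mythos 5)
The first thing to say is that the paper does not prove Theorem \ref{t:planardecomp} at all: it is stated as background, explicitly described as ``a combination of results due to Makarov, McMillan, Pomerenke, and Choi,'' with \cite{garnett2005harmonic} cited for the precise references. So there is no in-paper argument to compare you against; your sketch is essentially an outline of the classical proofs the paper is citing, and at that level the route is the right one (Riemann map plus Carath\'eodory, McMillan's dichotomy, the Makarov--Choi density results at twist points, and a differentiation argument on the good set).

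Two places where your set-up, as written, does not quite deliver the statement. First, McMillan's twist-point theorem is a dichotomy between twist points and cone (inner-tangent) points, not between twist points and points where $\phi$ has a finite nonzero angular derivative; with $\tilde G$ defined via the angular derivative, the claim $\sigma(\tilde N)=0$ needs an additional ingredient (e.g.\ the local F.\ and M.\ Riesz theorem, or Pommerenke's results relating sector points to angular derivatives). The cleaner variant is to define $G$ through cone points, as items (6)--(7) of the statement suggest: (5) is then McMillan's sector theorem, and (8) follows because cone points form a countable union of Lipschitz graphs, so $\cH^1$ restricted to $G$ is $\sigma$-finite and rectifiable and Besicovitch differentiation applies. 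Second, with $S:=\phi(\tilde S)$ the image of \emph{all} twist points, (2) does not follow: the density oscillation (3) --- the $\limsup=\infty$ half is Makarov's, the $\liminf=0$ half is Choi's, not Makarov's --- is only an $\omega$-a.e.\ statement on the twist points, and the exceptional $\omega$-null set of twist points could a priori carry $\cH^1$-mass. The standard fix is the one your own argument suggests: take $S$ to be the set of twist points at which $\limsup_{r\to 0} r^{-1}\omega(B(p,r))=\infty$ (a full $\omega$-measure subset of the twist points), move the rest into $N$ (which remains $\omega$-null), and then the comparison theorem for upper densities gives $\cH^1(S)=0$ because the infinite upper density holds at \emph{every} point of $S$, not merely almost every point.
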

Note that property (8) implies that $\omega \restr G$ is rectifiable by Preiss' Density Theorem \cite{Preiss}.

We recall the Hausdorff dimension of $\omega$, denote $\dim_{\cH}(\omega)$ is defined by
$$
\dim_{\cH}(\omega) = \inf\{s : \exists E \subset \partial \Omega \text{ with } \cH^{s}(E) = 0 \text{ and } \omega(E \cap K) = \omega(\partial \Omega \cap K) \, \forall \text{ compact } K \subset \R^{n}\}.
$$
Through a slight abuse of notation, we will also write $\dim_{\cH}(E)$ to denote the Hausdorff dimension of a set $E$.

In the plane, Makarov established that for simply connected domains, $\dim_{\cH}(\omega) = 1$ \cite{makarov1985distortion}, answering Oskendal's Conjecture in the plane, concerning which type of domains $\dim_{\cH}(\omega) = 1$ for. For general domains in $\R^{2}$, it is only known that $\dim_{\cH}(\omega) \le 1$, \cite{carleson1985support}, \cite{jones1988hausdorff}. 

In $\R^{n}$ for $n \ge 3$, Bourgain \cite{bourgain1987hausdorff} showed there exists a constant $\beta_{n} > 0$ so that $\dim_{\cH}(\omega) \le n - \beta_{n}$. In \cite{wolff1995counterexamples}, Wolff showed that Oksendal's conjecture fails in general for $n \ge 3$ by constructing ``Wolff snowflakes" for which one can have $\dim_{\cH}(\omega) < n-1$ and $\dim_{\cH}(\omega) > n-1$. Wolff's examples are two-sided NTA domains. Lewis, Verchota \& Vogel \cite{LVV} re-examined Wolff's snowflakes and discovered one can simultaneously have $\dim_{\cH}(\omega^{\pm}) < n-1$ or $\dim_{\cH}(\omega^{\pm}) > n-1$. However, due to the Alt-Caffarelli-Friedman monotonicity formula, if $\omega^{+} \ll \omega^{-} \ll \omega^{+}$ then $\dim_{\cH}(\omega^{\pm}) \ge n-1$. This final observation led Bishop \cite{bishop1992questions} to ask whether $\omega^{\pm}(E) > 0$ and $\omega^{+} \ll \omega^{-} \ll \omega^{+}$ ensure that {$\omega^\pm$ are both mutually absolutely continuous with respect to $\Hcal^{n-1}$ on $E$, modulo a set of $\omega^\pm$-measure zero, and thus} $\dim_{\cH}(E) = n-1$, where $\dim_{\cH}(E)$ denotes the Hausdorff dimension of $E$. Bishop's question was answered positively {in \cite{azzam2017mutual} for CDC domains, and subsequently in \cite{azzam2019two} in full generality.}

In the absence of complex analytic tools, studying the geometry of $\partial \Omega$ for rough domains $\Omega\subset \R^n$ for $n\geq 3$ in terms of the behavior of harmonic measure relies heavily on ``two-phase" techniques for the harmonic measure. The following slightly weaker version of a boundary decomposition theorem holds for harmonic measure in $\R^{n}$, $n\geq 3$.

\begin{theorem}[\cite{KPT}] \label{t:kptdecomp}
    Let $n \ge 3$. If $\Omega^{\pm} \subset \R^{n}$ are complementary NTA domains and $\Omega^{\pm}$ are the interior and exterior harmonic measures, then $\partial \Omega = \Gamma \sqcup S \sqcup N$ where
    \begin{itemize}{
        \item[(1)] $\omega^{+} \perp \omega^{-}$ on $S$ and $\omega^{\pm}(N) = 0$.
        \item[(2)] On $\Gamma$, $\omega^{+} \ll \omega^{-} \ll \omega^{+}$ and $\dim_{\cH} \Gamma \le n-1$, and if $\omega^\pm(\Gamma)>0$ then $\dim_{\cH} \Gamma = n-1$.
        \item[(3)] If $\cH^{n-1} \restr \partial \Omega$ is a Radon measure then $\Gamma$ is $(n-1)$-rectifiable and $\omega^{\pm} \ll \cH^{n-1} \ll \omega^{\pm}$ on $\Gamma$
        \item[(4)] For every point $p \in \Gamma$, $\Tan(\omega^{\pm},p) \subset \cF$.}
    \end{itemize}
\end{theorem}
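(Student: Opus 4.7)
The plan is to follow the Kenig-Preiss-Toro blueprint: perform a Lebesgue decomposition of $\omega^{-}$ with respect to $\omega^{+}$; use the Alt-Caffarelli-Friedman (ACF) monotonicity formula applied to the pair of Green functions to pin down flatness of tangent measures on the mutual absolute continuity set; and then read off dimensional bounds and rectifiability via Preiss-type density arguments.

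\textbf{Measure decomposition.} Writing $\omega^{-} = h\,\omega^{+} + \omega^{-}_{s}$ with $\omega^{-}_{s}\perp \omega^{+}$, put
\[
\Gamma_{0} := \{p\in\partial\Omega : 0 < h(p) < \infty\},
\]
let $N$ be the common $\omega^{\pm}$-null exceptional set, and let $S$ carry the singular part $\omega^{-}_{s}$ together with $\{h=0\}$. By Radon-Nikodym one has $\omega^{+}\ll\omega^{-}\ll\omega^{+}$ on $\Gamma_{0}$, $\omega^{+}\perp\omega^{-}$ on $S$, and $\omega^{\pm}(N)=0$, proving (1) and the absolute continuity clause of (2). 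One then refines $\Gamma_{0}$ to a Borel full-$\omega^{\pm}$-measure subset $\Gamma$ on which the geometric conclusions below actually hold pointwise.

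\textbf{Flat tangents via ACF.} Fix poles $x^{\pm}\in\Omega^{\pm}$ and let $u^{\pm}$ be the corresponding Green functions, extended by zero across $\partial\Omega$. NTA domains are Wiener regular, so these extensions are non-negative subharmonic functions on $\R^{n}\setminus\{x^{\pm}\}$ with disjoint supports, and the ACF monotonicity formula applies to the functional
\[
J(p,r) := \frac{1}{r^{4}}\prod_{\pm}\int_{B(p,r)} \frac{|\nabla u^{\pm}(y)|^{2}}{|y-p|^{n-2}}\,dy.
\]
For every $p\in\partial\Omega$, $r\mapsto J(p,r)$ is non-decreasing. Using the Jerison-Kenig comparison $u^{\pm}(y) \approx \omega^{\pm}(B(p,|y-p|))\,|y-p|^{2-n}$ on NTA domains, the finiteness and positivity of $h(p)$ at $\omega^{\pm}$-a.e.\ $p\in\Gamma_{0}$ force a finite, positive limit $J(p,0^{+})$. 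The rigidity clause of the ACF inequality then forces the blow-ups of $u^{\pm}$ at $p$ to be of the form $y\mapsto \alpha^{\pm}(\pm y\cdot \nu)_{+}$ for some unit vector $\nu$ and constants $\alpha^{\pm}>0$. Since the harmonic measure of a half-space is a constant multiple of $\cH^{n-1}$ on its bounding hyperplane, one obtains (4): $\Tan(\omega^{\pm},p)\subset\cF$ at every $p$ in the refined $\Gamma$.

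\textbf{Dimension and rectifiability.} The Jerison-Kenig comparison combined with doubling of $\omega^{\pm}$ on NTA domains converts finiteness of $J(p,0^{+})$ into a finite upper $(n-1)$-density for $\omega^{\pm}$ on $\Gamma$, and positivity of $J(p,0^{+})$ into a positive lower $(n-1)$-density. Standard Frostman-type estimates then give $\dim_{\cH}\Gamma \le n-1$, with the matching lower bound $\dim_{\cH}\Gamma \ge n-1$ whenever $\omega^{\pm}(\Gamma)>0$, completing (2). If in addition $\cH^{n-1}\restr\partial\Omega$ is a Radon measure, the finite and positive $(n-1)$-densities of $\omega^{\pm}$ on $\Gamma$ feed into Preiss' density theorem, giving the $(n-1)$-rectifiability of $\omega^{\pm}\restr\Gamma$ and of $\Gamma$ itself, as well as $\omega^{\pm}\ll \cH^{n-1}\ll\omega^{\pm}$ on $\Gamma$, which is (3). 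The main technical obstacle is the second step: establishing that $J(p,0^{+})$ is finite on a set of full $\omega^{\pm}$-measure in $\Gamma_{0}$, and extracting genuinely flat blow-ups from the rigidity case of ACF on an arbitrary NTA boundary, which requires careful control of blow-up compactness for NTA domains together with the precise boundary behavior of the zero-extended Green functions. Once this is secured, the passage from flat tangents to dimension and rectifiability is comparatively routine.
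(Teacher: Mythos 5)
The statement you are proving is quoted in the paper from \cite{KPT}; the paper itself does not reprove it, but both the original proof and the machinery this paper builds (Theorem \ref{t:blowups}, Lemma \ref{l:connectedness}, Lemma \ref{l:twoplane}, Theorem \ref{t:tan2ltan}) follow a very different route from yours at the crucial step, and your shortcut contains a genuine gap. Your Lebesgue decomposition giving (1) and the mutual absolute continuity clause of (2) is fine, and using the ACF monotonicity formula together with the CFMS comparison to get the dimension lower bound ($\dim_{\cH}\Gamma = n-1$ when $\omega^{\pm}(\Gamma)>0$) is indeed how \cite{KPT} argue (the introduction of this paper says as much). The problem is your claim that ``finiteness and positivity of $h(p)$ at $\omega^{\pm}$-a.e.\ $p\in\Gamma_0$ force a finite, \emph{positive} limit $J(p,0^{+})$.'' Monotonicity plus the comparison $u^{\pm}(x_{p,r}^{\pm})\approx \omega^{\pm}(B(p,r))r^{2-n}$ gives finiteness of $J(p,0^+)$ (equivalently, an upper bound on the product of the $(n-1)$-density ratios), but positivity is not a consequence of $0<h(p)<\infty$: the blow-ups of $u^{\pm}$ at the linear scaling $u(r\cdot+p)/r$ may a priori degenerate, for instance when the tangent behavior is governed by a harmonic polynomial of degree $\ge 2$, in which case $\omega^{\pm}(B(p,r))/r^{n-1}\to 0$ and $J(p,0^{+})=0$, so the ACF rigidity clause gives you nothing. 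Asserting $J(p,0^{+})>0$ a.e.\ on $\Gamma_0$ amounts to asserting positive lower $(n-1)$-density of $\omega^{\pm}$ there, which is strictly stronger than the theorem's conclusions (note that conclusion (4) is about tangent \emph{measures}, normalized by $\omega^{\pm}(B(p,r))$ rather than $r^{n-1}$, so flat tangents do not imply nondegenerate linear growth of the Green functions). Ruling out the degenerate, higher-degree alternatives is precisely the hard part of the theorem.

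The actual argument (in \cite{KPT}, and in the multi-operator analogue carried out in Sections \ref{s:blowup}--\ref{s:Preiss} of this paper) replaces your ACF rigidity step by: (a) a blow-up theorem identifying $\Tan(\omega^{\pm},p)$, at a.e.\ point of mutual absolute continuity, with a cone of measures associated to a global two-phase problem (polynomial harmonic measures in the harmonic case; $\cD(A^+,A^-)$ here); (b) an isolation/rigidity property of the flat measures $\cF$ inside that cone (property \eqref{e:pi}, proved here via Corollary \ref{c:flat-fb}); (c) Preiss' connectedness dichotomy (Lemma \ref{l:connectedness}); and (d) the production of at least one flat tangent via ``tangents of tangents are tangents'' (Theorem \ref{t:tan2ltan}) applied at a touching-ball point of the support of a tangent measure (Lemma \ref{l:twoplane}), which rules out the empty-intersection horn of the dichotomy. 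Your step two silently replaces (b)--(d) by an unproved density statement. Relatedly, in part (3) your appeal to Preiss' density theorem needs the \emph{existence} of a positive finite $(n-1)$-density, not merely upper and lower bounds, so even granting your ACF claim that step would need more care; \cite{KPT} instead derive (3) from the flat-tangent structure together with the extra hypothesis on $\cH^{n-1}\restr\partial\Omega$. As written, the proposal does not prove conclusions (2) (the bound $\dim_{\cH}\Gamma\le n-1$), (3), or (4).
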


The conclusion of Theorem \ref{t:kptdecomp} was extended to the case of two-phase elliptic measure \cite{AzzMou} for a \emph{single} elliptic operator whose coefficients have sufficient regularity across the boundary $\partial\Omega$. One can thus view our main result Theorem \ref{t:decomp} as an answer to the question: does the above relationship between the regularity of interior- and exterior- elliptic measure and the geometry of the boundary remain true in the \emph{multi-operator} setting, where the coefficients of the corresponding elliptic operators may have a discontinuity across the boundary? 

In particular, in the multi-operator setting, we {establish the validity Oksendal's conjecture for NTA domains} under the assumption of mutual absolute continuity of the pair of elliptic measures. Namely, we show that the set of points of mutual absolute continuity has dimension {$n-1$ (modulo a set of $\omega^\pm$-measure zero)} and we make progress toward a geometric characterization by showing that on a full measure subset of these points the elliptic measures $\omega_{A^{\pm}}^{\pm}$ have flat tangents. {To show Oksendal's conjecture in the present setting, we invoke an argument in \cite{AzzMou} in place of that in \cite{KPT}, which avoids the use of the ACF monotonicity formula.} Although an analogue of such a monotonicity formula exists in the multi-operator setting (see \cite{TerrSoa}), it does not share the property of characterizing flat regions of the boundary as in the single-operator case, thus preventing it from being useful in the study of the regularity properties of the boundary; cf. Remark \ref{r:TerrSoa}.

Two notable differences between Theorem \ref{t:planardecomp} and Theorem \ref{t:kptdecomp} are that, first, mutual absolute continuity of the harmonic and surface measure on the ``good set" is established in the plane and not in Theorem \ref{t:kptdecomp} without any additional assumption and, second, only in the plane is there a geometric characterization of the ``good set" and the ``singular set" {in terms of the surface measure}.  Both the mutual absolute continuity of harmonic and surface measure and a geometric characterization of the ``good set" are handled in the remarkable works \cite{azzam2017mutual,azzam2019two}, which can be summarized as follows.

\begin{theorem} \cite{azzam2017mutual,azzam2019two} \label{t:amtdecomp}
    Let $n \ge 3$. If $\Omega^{\pm} \subset \R^{n}$ are complementary domains, then $\partial \Omega = G \sqcup S \sqcup N$, where
    \begin{itemize}
        \item[(1)] $\omega^{+} \perp \omega^{-}$ on $S$ and $\omega^{\pm}(N) = 0$
        \item[(2)] $G$ is $(n-1)$-rectifiable and $\omega^{\pm} \ll \cH^{n-1} \ll \omega^{\pm}$ on $G$.
    \end{itemize}
\end{theorem}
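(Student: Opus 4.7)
The splitting $\partial\Omega = F^{*}\sqcup S\sqcup N$ is induced by the Lebesgue decomposition of $\omega^{-}$ with respect to $\omega^{+}$. Writing $\omega^{-} = h\,\omega^{+} + \omega^{-}_{s}$ with $h = d\omega^{-}/d\omega^{+}$ and $\omega^{-}_{s}\perp\omega^{+}$, the set $F_{1}=\{0<h<\infty\}$ is exactly where the two measures are mutually absolutely continuous. Take $S$ to be the union of the (symmetric) singular parts together with $\{h\in\{0,\infty\}\}$, and let $N$ be the residual $\omega^{\pm}$-null set. This already handles (ii), (iii), and the absolute continuity clause of (i). The refined set $F^{*}\subset F_{1}$ will be the full $\omega^{\pm}$-measure subset of $F_{1}$ on which, simultaneously, (a) $p$ is a Lebesgue point for $h$ with $h(p)\in(0,\infty)$; (b) the quasicontinuity limit $\lim_{r\downarrow 0}\omega^{\pm}(B(p,r))^{-1}\int|A^{\pm}-A^{\pm}(p)|\,d\omega^{\pm}=0$ holds as in the Remark following the definition of $2$-quasicontinuity; and (c) all standard doubling/NTA density properties hold.

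For any $p\in F^{*}$ and any sequence $r_{j}\downarrow 0$, the renormalized push-forwards $c_{j}^{\pm}T_{p,r_{j}}[\omega^{\pm}]$ with $c_{j}^{\pm}=\omega^{\pm}(B(p,r_{j}))^{-1}$ are weak-$*$ precompact by NTA doubling, and the rescaled domains $T_{p,r_{j}}(\Omega^{\pm})$ Hausdorff-converge along subsequences to complementary unbounded two-sided NTA blow-up domains $\Omega^{\pm}_{\infty}$. Using (b) together with standard coefficient-perturbation arguments (Caccioppoli energy estimates combined with the comparison principle on NTA domains), the tangent measure $\omega^{\pm}_{\infty}\in\Tan(\omega^{\pm},p)$ is identified with the elliptic measure of the \emph{constant-coefficient} operator $L^{\pm}_{\infty}=-\divr(A^{\pm}(p)\nabla\cdot)$ on $\Omega^{\pm}_{\infty}$. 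By (a), these two tangent measures are uniformly comparable with $\omega^{-}_{\infty}=h(p)\,\omega^{+}_{\infty}$. This reduces matters to a global multi-operator two-phase free-boundary problem.

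The main obstacle, and the technical heart of the argument, is to prove that every such tangent pair $(\omega^{+}_{\infty},\omega^{-}_{\infty})$ forces $\omega^{\pm}_{\infty}\in\cF$. In the single-operator case of \cite{KPT,TZ,AM}, the Alt-Caffarelli-Friedman monotonicity formula delivers flatness as soon as mutual absolute continuity holds. As noted in Remark \ref{r:TerrSoa}, the multi-operator substitute of \cite{TerrSoa} does not characterize half-space configurations, so monotonicity must be bypassed. Following the strategy of Preiss's density theorem, I would iterate tangents-of-tangents to produce self-similar (conical, and eventually translation-invariant along a subspace) configurations that still carry a pair of mutually absolutely continuous constant-coefficient elliptic measures on complementary unbounded NTA domains, and then use a direct free-boundary rigidity argument to rule out every non-flat self-similar configuration. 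Once $\Tan(\omega^{\pm},p)\subset\cF$ is known for all $p\in F^{*}$, Preiss-type density bounds produce an $(n-1)$-dimensional upper density estimate $\limsup_{r\downarrow 0} r^{-(n-1)}\omega^{\pm}(B(p,r))<\infty$ on $F^{*}$, which yields $\omega^{\pm}\restr F^{*}\ll\cH^{n-1}$ and hence $\dim_{\cH}F^{*}\le n-1$, completing (i).
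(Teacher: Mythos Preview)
You are proving the wrong theorem. The statement labeled \texttt{t:amtdecomp} is a \emph{cited} result from \cite{azzam2017mutual} about \emph{harmonic} measure on complementary NTA domains; the present paper does not prove it, and in fact explicitly remarks (immediately after the statement) that its proof depends on Riesz transform results of \cite{girela2018riesz} which are currently unavailable in the multi-operator setting. Your entire write-up --- the sets $F_{1}$, $F^{*}$, the $2$-quasicontinuous coefficients $A^{\pm}$, the reference to Remark~\ref{r:TerrSoa}, and the conclusion $\Tan(\omega^{\pm},p)\subset\cF$ with $\dim_{\cH}F^{*}\le n-1$ --- is a sketch of Theorem~\ref{t:decomp}, the paper's own main theorem, not of Theorem~\ref{t:amtdecomp}.

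Crucially, you never establish the two distinguishing conclusions of Theorem~\ref{t:amtdecomp}: that $G$ is $(n-1)$-\emph{rectifiable} and that $\omega^{\pm}\ll\cH^{n-1}\ll\omega^{\pm}$ on $G$. Flatness of all tangent measures does not by itself yield either of these; one needs quantitative control (e.g., upper and lower $(n-1)$-densities, or the Riesz-transform machinery used in \cite{azzam2017mutual}). Your final sentence is also incorrect on its own terms: the inclusion $\Tan(\omega^{\pm},p)\subset\cF$ does \emph{not} give the upper density bound $\limsup_{r\downarrow 0}r^{-(n-1)}\omega^{\pm}(B(p,r))<\infty$, nor does the paper claim this. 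The paper obtains $\dim_{\cH}F^{*}\le n-1$ instead via Lemma~\ref{l:beta-decay-dim}, by showing $\Theta_{\partial\Omega}(p,r)\to 0$ (hence $\beta_{F^{*}}(p,r)\to 0$) for every $p\in F^{*}$ --- a purely set-theoretic argument that does not pass through any density bound or through $\omega^{\pm}\ll\cH^{n-1}$.
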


Note that a characterization of $S$ in terms of surface measure is not possible in general without additional assumptions such as uniform upper $(n-1)$ density control (see \cite{Badger12}), as demonstrated via a counterexample in \cite{AMT-counterex}.

The proof of Theorem \ref{t:amtdecomp} relies heavily on the Riesz transform (in particular the results of \cite{girela2018riesz}). Thus, the analogous rectifiable structure (up to an $\omega^\pm$-null set) for mutually absolutely continuous points is currently out of reach in the setting herein. We conjecture that such a strengthened characterization should also hold true here, however.

\subsection{Overview of proof of Theorem \ref{t:decomp} and structure of article}\label{s:overview}

The following class of measures identifies the free-boundary problem that our tangent measures satisfy at a.e. point in $F_{1}$.  

\begin{definition} \label{d:cda1a2}
For any two elliptic matrices $A^{\pm}$, we write $\nu \in \cD(A^{+},A^{-})$ to mean that $\nu$ is a Radon measure with $0 \in \spt \nu$ that satisfies the following properties:
\begin{enumerate}
    \item $\R^{n} \setminus \spt ~ \nu = \Omega^{+} \cup \Omega^{-}$ for unbounded complementary NTA domains $\Omega^\pm$ with fixed NTA constants.
    \item There exist non-negative functions $u^{\pm} >0$ on $\Omega^{\pm}$ and supported on $\overline{\Omega^{\pm}}$ respectively, so that
    $$
        \int \varphi \, d \nu = \int_{\R^{n}} \nabla u^{\pm} \cdot (A^{\pm} \nabla \varphi) \qquad \forall \varphi \in C^{\infty}_{c}(\R^{n}).
    $$

    The functions $u^{\pm}$ are referred to as Green's functions with pole at infinity for the triple $(\Omega^{\pm},\nu,L_{A^{\pm}})$, see Remark \ref{r:poleatinf}.
\end{enumerate}
\end{definition}

{Definition \ref{d:cda1a2} can be viewed as the property that $\nu$ is supported on an NTA boundary admitting a pair of Greens functions for $L_{A^\pm}$-elliptic measures with poles at infinity on either side.} We have the following theorem (see Theorem \ref{t:blowups} for a more detailed version).
\begin{theorem} \label{t:maintangents}
    Suppose that $\Omega^\pm$, $L_A$ and $\omega^\pm$ are as above. For $\omega^{\pm}$-a.e. point $p$ in the set $F_1$ defined in Theorem \ref{t:decomp},
    $$
        \Tan(\omega^\pm,p) \subset \cD(A^+(p), A^{-}(p)).
    $$
\end{theorem}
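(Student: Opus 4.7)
The plan is to realize each tangent measure $\nu \in \Tan(\omega^+, p)$ as arising from a \emph{simultaneous} blowup of the two one-phase Green's function problems for $L_{A^\pm}$, where the identification of $\nu$ as the common blowup of $\omega^+$ and $\omega^-$ (up to a positive multiplicative constant) is supplied by the positivity and finiteness of the Radon-Nikodym derivative at $p$. First I fix a generic $p \in F_1$ at which: (a) $\lambda(p) := \tfrac{d\omega^-}{d\omega^+}(p) \in (0, \infty)$; (b) $\lim_{r \to 0} \omega^\pm(B(p,r))^{-1} \int_{B(p,r)} |A^\pm - A^\pm(p)| d\omega^\pm = 0$, available by $2$-quasicontinuity together with \cite[Theorem 11.14]{HKT} as recorded in the remark near the start of the paper; (c) $p \in \partial \Omega$ and the local NTA constants at $p$ coincide with the global ones. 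Let scales $r_i \downarrow 0$ and normalizers $c_i^+ > 0$ give $c_i^+ T_{p, r_i}[\omega^+] \toweakstar \nu$. A standard application of (a) (via comparison of $\omega^-(B(p,r))$ with $\lambda(p)\omega^+(B(p,r))$ on small balls and localization of test functions) forces $c_i^+ T_{p, r_i}[\omega^-] \toweakstar \lambda(p)\, \nu$, so with $c_i^- := \lambda(p)^{-1} c_i^+$ one obtains $c_i^- T_{p, r_i}[\omega^-] \toweakstar \nu$. The same $\nu$ is thus a tangent of $\omega^-$ along the same scales, which is exactly what allows a single limiting measure to carry two Green's-function representations, as required by Definition \ref{d:cda1a2}(2). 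Note that this also handles the $\omega^-$ assertion of the theorem, since $\Tan(\omega^+, p) = \Tan(\omega^-, p)$ at any such $p$.

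Next I blow up the domains. Since $\Omega^\pm$ are NTA with uniform constants preserved under rescaling, the standard compactness theory for NTA domains yields, along a subsequence, unbounded complementary NTA domains $\Omega^\pm_\infty$ with $T_{p, r_i}\Omega^\pm \to \Omega^\pm_\infty$ in Hausdorff distance on compact sets; non-degeneracy and doubling of $\omega^\pm$ at boundary points then force $\spt \nu = \partial \Omega^+_\infty = \partial \Omega^-_\infty$, which gives item (1) of Definition \ref{d:cda1a2}. To produce the Green's functions $u^\pm$ in item (2), I set $u_i^\pm(y) := \beta_i^\pm\, G^\pm(p + r_i y, x^\pm)$, where $G^\pm$ is the Green's function for $L_{A^\pm}$ on $\Omega^\pm$ with suppressed pole $x^\pm$, and $\beta_i^\pm$ is chosen via the Caffarelli-Fabes-Mortola-Salsa comparison $G^\pm(\cdot, x^\pm) \asymp r_i^{2-n}\, \omega^\pm(B(p, r_i))$ at distance $\sim r_i$ from $p$, so that the weak-form Dirac mass on the right has total scaling compatible with $c_i^\pm T_{p, r_i}[\omega^\pm]$. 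Boundary H\"older estimates and interior Harnack on NTA domains make $\{u_i^\pm\}$ locally uniformly bounded and equicontinuous on compacts of $\Omega^\pm_\infty$; along a further subsequence, $u_i^\pm \to u^\pm$ locally uniformly with $u^\pm > 0$ on $\Omega^\pm_\infty$ and vanishing identically on $\overline{\Omega^\mp_\infty}$.

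It remains to pass to the limit in the weak formulation: after rescaling, for every $\varphi \in C^\infty_c(\R^n)$ one has an identity of the schematic form $\int \nabla u_i^\pm \cdot A^\pm(p + r_i\cdot)\, \nabla \varphi\, dy = \int \varphi\, d(c_i^\pm T_{p,r_i}[\omega^\pm])$, and the right-hand side converges to $\int \varphi\, d\nu$ by construction. The main obstacle I anticipate is the left-hand side: replacing $A^\pm(p + r_i y)$ by the constant matrix $A^\pm(p)$ in the limit requires averaging the coefficients against $|\nabla u_i^\pm|^2 dy$, while property (b) is only an $L^1(\omega^\pm)$-Lebesgue-point condition for $A^\pm$. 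I would close this gap by splitting the domain of integration into a thin tubular neighborhood of the blown-up boundary, where $|\nabla u_i^\pm|^2 dy$ is controlled by $d\omega^\pm$ in an averaged sense via Dahlberg/CFMS-type bounds so that (b) applies directly, and the bulk complement, where classical Lebesgue differentiation of $2$-quasicontinuous functions suffices once $\{u_i^\pm\}$ is controlled in $W^{1,2}_\loc$ via Caccioppoli; this bridge between $\omega^\pm$-Lebesgue points and $dy$-Lebesgue points is the decisive input (provided by \cite[Theorem 11.14]{HKT}) that distinguishes the multi-operator argument from the single-operator predecessors of \cite{KPT,TZ,AM}. The remaining verifications - positivity of the $u^\pm$ on $\Omega^\pm_\infty$, $u^\pm \equiv 0$ on the opposite side, and matching of supports with $\spt\nu$ - follow from Harnack, Hopf, and NTA non-degeneracy essentially routinely.
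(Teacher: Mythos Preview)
Your overall strategy matches the paper's (Theorem~\ref{t:blowups}): fix a generic boundary point, blow up the domains, the Green functions, and the measures simultaneously, identify the two limiting measures, and pass to the limit in the weak formulation. The paper outsources the coefficient freezing step to \cite[Lemma~3.11]{AzzMou} rather than doing your tubular/bulk split, but that is a presentational difference, not a mathematical one.

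There is, however, a genuine gap in your second paragraph. You claim that condition~(a) alone --- the existence of $\lambda(p)=\lim_{r\to 0}\omega^-(B(p,r))/\omega^+(B(p,r))\in(0,\infty)$ --- forces $c_i^+T_{p,r_i}[\omega^-]\toweakstar\lambda(p)\,\nu$ ``via comparison on small balls and localization of test functions''. This is false: convergence of the ratio on balls \emph{centered at $p$} does not control the blow-up against a general test function. Concretely, decompose $\omega^-=h\,\omega^+ + \omega^-_s$ with $\omega^-_s\perp\omega^+$. To make the absolutely continuous part converge to $\lambda(p)\,\nu$ you need $p$ to be an $L^1(\omega^+)$--Lebesgue point of $h$ (so that $h(y)\approx\lambda(p)$ for $\omega^+$-most $y$ near $p$, not merely that the ball-averages converge); to kill the singular part you need $p$ to be an $\omega^-$--density-one point of $F_1$ (equivalently $\theta(\omega^-,p,\partial\Omega\setminus F_1)=0$). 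Neither follows from~(a). A one-dimensional model makes the first point transparent: with $\omega^+=\mathcal L^1$ and $\omega^-=(1+\epsilon\sin(1/x))\,dx$, the ratio on balls centered at $0$ is identically $1$, yet the tangent measures of $\omega^-$ at $0$ are not multiples of $\mathcal L^1$.

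The fix is exactly what the paper does: enlarge your list of generic properties to include $p\in\Ccal(\omega^+,h)$ and $\theta(\omega^\pm,p,F_1)=1$ (the paper packages these into the set $F_0$; see \eqref{e:gamma0} and Remark~\ref{r:decomp-Leb-diff}), and then run the splitting $(I)+(II)=(III)+(IV)+(II)$ as in the proof of Theorem~\ref{t:blowups}. With those two assumptions added, the rest of your argument goes through and coincides with the paper's.
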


The proof of Theorem \ref{t:decomp}, much like its analogues in the case of harmonic measure \cite{KPT} or a single elliptic operator \cite{AzzMou}, relies on a procedure first introduced by Preiss in his resolution of the density theorem. We will now briefly summarize the procedure of Preiss and how to use tangent measures, but with the specific focus on proving the claim that $\Tan(\omega^{\pm},p) \subset \cF$ in Theorem \ref{t:decomp}. Therefore, this summary also provides a broad outline of the rest of the paper. We wish to apply a key lemma regarding the connectedness of the space of tangent measures, Lemma \ref{l:connectedness}, in the setting where $\cM = \cD(A_{1}, A_{2})$. We encourage the reader to look at that lemma before reading the following three-step overview of what remains:
\begin{enumerate}
    \item For $\omega^{\pm}$-a.e. $p \in F_{1}$, there are elliptic matrices $A_{1}, A_{2}$ so that $\Tan(\omega^{\pm},p) \subset \cD(A_{1}, A_{2})$; this is the contents of Theorem \ref{t:blowups}. {In fact, we verify that this is equivalent to the conclusion that for $\Lambda(p) = \sqrt{A_{1}}$, the \emph{$\Lambda$-tangents} for $\omega^\pm$-a.e. point $p \in F_1$ satisfy $\Tan_{\Lambda}(\omega^{\pm},p) \subset \cD(\id, \Lambda(p)^{-1} A_{2} \Lambda(p))$.}
    \item The pair $(\cF, \cD(\id, \Lambda(p)^{-1} A_{2} \Lambda(p)))$ satisfy \eqref{e:pi}, {which we demonstrate guarantees a rigidity property for Radon measures in the class $\cD(\id, \Lambda(p)^{-1} A_{2} \Lambda(p))$ that are flat at infinity}. This is the hardest step, and 
    is shown in Section \ref{s:freeboundary}. {In particular, in Sections \ref{ss:viscosity-sol} and \ref{ss:eps-mon}, we demonstrate that after a suitable blow-up procedure at $\omega^{\pm}$-a.e. $p \in F_{1}$, the difference of limiting Greens functions is a viscosity solution of a free-boundary problem with both a Dirichlet and a transmission condition at the free-boundary, the latter agreeing with the blown up boundary at $p$. We are then able to exploit known regularity and rigidity techniques for such free boundary problems.}
    \item After the first two steps we can apply Lemma \ref{l:connectedness}, but risk the case that the dichotomy tells us we have no flat tangents. We resolve this concern by applying \cite[Proposition 4.4]{AM} (which we restate with additional details in Lemma \ref{l:twoplane}) and Theorem \ref{t:tan2ltan} to conclude that for $\omega^{\pm}$-a.e. $p$, $\Tan_{\Lambda}(\mu,p) \cap \cF \neq \emptyset$, thus ruling out case (ii) in the dichotomy and proving the claim that for $\omega^{\pm}$-a.e. $p \in F_{1}$, $\Tan(\omega^{\pm},p) \subset \cF$.
\end{enumerate}

The experienced reader may notice that we passed between tangent measures and $\Lambda$-tangents in our outline of the paper above. This is done formally via Lemma \ref{l:iso} and is useful to perform an appropriate linear change of variables to simplify the structure of our free boundary problem to match the form as that previously studied in \cite{Feldman, AM, Cerutti-Ferrari-Salsa, Ferrari-Salsa, CSS-2d}.

Step (2) above involves demonstrating that if a blown up boundary is sufficiently close to flat, then it is \emph{exactly} flat. In the case $A^{\pm} =\Id$, the free boundary problem reduces significantly to the case where the difference $u = u^{+} - u^{-}$ of the blown up Greens functions $u^\pm$ is a harmonic polynomial. The latter was the crucial property exploited in \cite{KPT, AzzMou} (see also \cite{BET}) in order to conclude the desired rigidity result, which follows from a Liouville-type theorem for harmonic polynomials. Here, however, the coefficients $A^\pm(p)$ differ on each side of $\partial \Omega$ and thus one cannot generally hope to realize the blown-up boundary as the nodal set of an elliptic polynomial. We instead replace this with the delicate study of the aforementioned multi-operator free boundary problem, for which we can obtain an analogous Liouville-type, adapting ground-breaking techniques of Caffarelli \cite{Caff1-Lip-implies-reg,Caff2-flat-implies-Lip,Caff3}, which had {already been extended to a setting with two different operators in} \cite{Feldman,AM,Cerutti-Ferrari-Salsa,Ferrari-Salsa}. {The crucial observation herein is that these results may be used here, since our blowups fall into the very same framework.}

The overall idea is a two-step procedure: first show that flatness of the boundary implies the boundary is Lipschitz, and then prove that Lipschitz boundary regularity in turn implies $C^{1,\alpha}$ boundary regularity. Although this is a local argument, which has since been superseded by the more flexible approach of De Silva \cite{DeSilva}, we crucially rely on the global rigidity consequence of this, {which seems out of reach with De Silva's techniques}. We refer the reader to Section \ref{ss:eps-mon-implies-Lip} for a more detailed discussion of the approach. Note that in the special case where the jump in the coefficients is characterized by a scalar function, namely $A^+(p) = c A^{-}(p)$, one may consider a suitably weighted difference of $u^+$ and $u^-$ in order to once again reduce to the elliptic polynomial case; see \cite{KLS1,KLS2}. In that setting, the same procedure as in \cite{AzzMou} carries through without a hitch, leaving it uninteresting in our setting. In particular, \cite{KLS1,KLS2} make use of the Alt-Caffarelli-Friedman monotonicity formula and Almgren's frequency function, neither of which we can use in this setting.

\subsection*{Acknowledgments}
The authors wish to thank Tatiana Toro for introducing this problem to them, hosting them at SLMath (formerly MSRI) and for a number of valuable discussions and insights. They also thank Max Engelstein, Dennis Kriventsov, and Steve Hofmann for fruitful discussions, as well as all of the staff at SLMath for their hospitality. We thank the anonymous referees for their helpful feedback, which allowed us to improve the presentation of the article greatly.

A.S. is grateful for the generous support of Dr.~ Max R\"ossler, the Walter Haefner Foundation and the ETH Z\"urich Foundation. This research was partially conducted during the period A.S. served as a Clay Research Fellow.

M.G. is supported by the European Research Council (ERC) under the European Union's Horizon Europe research and innovation programme (grant agreement No 101087499). 

Part of this work was funded by the grant DMS-1853993.

\section{Preliminaries}\label{s:prelim}

\subsection{Notation and conventions}

Unless otherwise stated, balls $B(x,r)$ of radius $r$ and center $x$ are assumed to be open, and $n \geq 3$ is a natural number. When the center of a given ball $B(0,r)$ is the origin, we instead often write $B_r$. A \emph{domain} is an open connected set, and $p$ denotes a point in the boundary of a given domain.  We use $\Omega, \Omega^{+}, \Omega^{-}$ to denote domains. We assume $\Omega^{\pm}$ are complementary domains, that is $\Omega^{-} = \R^{n} \setminus \overline{\Omega^{+}}$. We will often denote ``boundary balls" $B(p,r) \cap \partial\Omega$ by $\Delta(p,r)$.

We denote by $\delta_{\Omega}(x)$ the distance from $x$ to $\partial \Omega$. When the domain is understood, we simply write $\delta(x)$. 

We refer to $m$-dimensional linear subspaces of $\R^n$ as $m$-planes, and $G(m,n)$ denotes the space of $m$-planes in $\R^{n}$. We will be taking $m=n-1$ throughout, and our $(n-1)$-planes will be often denoted by $\pi$. We denote the pushforward of a Radon measure $\mu$ under a Borel map $T$ by $T[\mu]$.

\subsection{NTA domains}
We will work under the following assumptions on our domains $\Omega$, $\Omega^+$ and $\Omega^-$ throughout, as introduced in \cite{JK}. The first is a quantitative openness known as the \emph{corkscrew condition}.

\begin{definition}[Corkscrew Condition] \label{d:csc}
    Let $C_0, R_0>0$. We say that an open set $\Omega \subset \R^{n}$ satisfies the $(C_0,R_{0})$ interior corkscrew condition if for every $p \in \partial \Omega$ and $r \in (0,R_{0})$ there exists a point $x_{p,r}$, called the interior corkscrew point for $p$ at scale $r$, so that $B(x_{p,r}, r/C_0) \subset B(p,r) \cap \Omega$.

    We say that an open set $\Omega \subset \R^{n}$ satisfies the $(C_0,R_{0})$ two-sided corkscrew condition if both $\Omega$ and $\R^n\setminus \overline{\Omega}$ satisfy the $(C_0,R_{0})$ interior corkscrew condition.
\end{definition}

The next assumption is a quantitative notion of connectedness known as the \emph{Harnack chain condition}.

\begin{definition}[Harnack Chain Condition] \label{d:hcc}
    We say that a domain $\Omega$ satisfies the $(C_1,R_{1})$-Harnack chain condition if for every $0 < \rho \le R_1$, $\Lambda \ge 1$, and every pair of points $x,x^{\prime} \in \Omega$ with $\min\{\delta_{\Omega}(x), \delta_{\Omega}(x^{\prime})\} \ge \rho$ and $|x-x^{\prime}| < \Lambda \rho$ there is a chain of balls $B^{1}, \dots, B^{N} \subset \Omega$ with $N \le C_1(\log_{2} \Lambda + 1)$ so that:
    \begin{enumerate}
        \item $x \in B^{1}$, $x^{\prime} \in B_{N}$
        \item $B^{k} \cap B^{k+1} \neq \emptyset$ for all $k=1, \dots, N-1$
        \item $C_1^{-1} \diam (B^{k}) \le \dist_\Hcal[B^{k}, \partial \Omega] \le C_1 \diam (B^{k})$,
    \end{enumerate}
    where $\dist_\Hcal$ denotes the Hausdorff distance. The chain of balls is called a \emph{Harnack chain}.
\end{definition}

\begin{definition}[NTA domains] \label{d:NTA}
We say that an open set $\Omega \subset \R^{n}$ is a non-tangentially accessible domain (NTA domain) with constants $(C_2,R_{2})$ if it satisfies the $(C_2,R_{2})$-Harnack chain condition and the $(C_2,R_{2})$ two-sided corkscrew condition.

When we discuss an NTA domain, we will typically neglect to specifically mention the associated constants $(C_2,R_{2})$, which are referred to as the \emph{NTA constants}, since they will always be fixed. If $\Omega$ is unbounded, we require that $\R^{n} \setminus \partial \Omega$ consists of two non-empty connected components. If $\Omega$ is unbounded, then $R_{0}= \infty$ is allowed.

Sometimes authors make the distinction between one-sided and two-sided NTA domains. In such cases, our definition coincides with two-sided NTA domains.

\end{definition}

Let $\dist_{\Hcal}(A,B)$ denote the Hausdorff distance between two non-empty compact sets $A,B \subset \R^{n}$, that is, for non-empty compact sets $A,B \subset \R^{n}$,
 $$
 \dist_{\Hcal}[A,B] := \sup \{d(a,B) : a \in A \} + \sup \{d(b,A) : b \in B \}.
 $$
 For a closed set $\Gamma \subset \R^{n}$, for any $p \in \Gamma$ and any $r > 0$, we define

\begin{equation} \label{e:Theta}
\Theta_{\Gamma}(p,r) = \frac{1}{r}\inf \left\{ \dist_{\Hcal}\Big[(\Gamma -p)\cap \overline{B_{r}}, \pi \cap \overline{B_{r}}\Big] : \pi \in G(n-1,n)\right\}\,   
\end{equation}
and
\begin{equation} \label{e:beta}
\beta_{\Gamma}(p,r) = \frac{1}{r} \inf_{\pi \in G(n-1,n)} \sup_{q \in \Gamma \cap B(p,r)} \dist(q, p + \pi)
\end{equation}

 We recall the following lemma from \cite{KPT}, which will be useful to obtain the dimension estimate on $F^*$ in Theorem \ref{t:decomp}.

\begin{lemma}[{\cite[Lemma 2.4]{KPT}}]\label{l:beta-decay-dim}
     Suppose that $\Gamma\subset \R^n$ is a closed set with the property that
     \[
        \lim_{r\to 0} \beta_\Gamma(p,r) = 0 \qquad \forall \ p \in \Gamma\,.
     \]
     Then $\dim_\Hcal(\Gamma) \leq n-1$\,.
 \end{lemma}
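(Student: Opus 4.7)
The plan is to show $\Hcal^s(\Gamma) = 0$ for every $s > n-1$, which directly yields $\dim_\Hcal(\Gamma) \leq n-1$. Fix such an $s$ and choose $\eps = \eps(s,n) > 0$ small enough that $2^s C_n \eps^{s-(n-1)} < 1$, where $C_n$ is a dimensional constant appearing in the packing estimate below.

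First I would use the pointwise vanishing hypothesis to stratify $\Gamma$ as the increasing union $\Gamma = \bigcup_m \Gamma_m$ with
\[
\Gamma_m := \{ p \in \Gamma : \beta_\Gamma(p, r) \leq \eps \text{ for all } r \leq 1/m \}.
\]
By countable subadditivity of $\Hcal^s$, it suffices to bound $\Hcal^s(\Gamma_m \cap B(p_0, 1/m))$ for each $m \in \N$ and each $p_0 \in \Gamma_m$, since any single $\Gamma_m$ is covered by countably many such balls.

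The heart of the argument would be an iterated refinement in which the covering balls are centered within $\Gamma_m$, not merely $\Gamma$. For any $q \in \Gamma_m$ and any $r \leq 1/m$, the hypothesis places $\Gamma_m \cap B(q,r) \subseteq \Gamma \cap B(q,r)$ inside the $(\eps r)$-tubular neighborhood of some affine $(n-1)$-plane through $q$. Taking a maximal disjoint family $\{B(q_j, \eps r)\}$ with $q_j \in \Gamma_m \cap B(q,r)$, the enlargement $\{B(q_j, 2\eps r)\}$ covers $\Gamma_m \cap B(q,r)$ by maximality. A simple volume comparison between this slab (of $n$-volume $\lesssim \eps r^n$) and each disjoint ball (of $n$-volume $\gtrsim (\eps r)^n$) forces the cardinality of the family to be at most $C_n \eps^{-(n-1)}$. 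Iterating this refinement $k$ times starting from $B(p_0, 1/m)$ yields a cover of $\Gamma_m \cap B(p_0, 1/m)$ by at most $(C_n \eps^{-(n-1)})^k$ balls of radius $(2\eps)^k/m$, whence
\[
\Hcal^s_{(2\eps)^k/m}(\Gamma_m \cap B(p_0, 1/m)) \leq m^{-s} \bigl(2^s C_n \eps^{s-(n-1)}\bigr)^k \xrightarrow[k \to \infty]{} 0
\]
by the choice of $\eps$.

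The main obstacle I expect, and the reason for insisting that each new center $q_j$ lie in $\Gamma_m$, is that the hypothesis supplies no uniform $\beta$-control across $\Gamma$---only pointwise. If at some refinement step a new center were allowed to lie in $\Gamma \setminus \Gamma_m$, the next iteration would have no guarantee that $\beta_\Gamma(q_j, \cdot) \leq \eps$ at the scale required, and the geometric decay would collapse. Restricting to $\Gamma_m$ preserves uniform flatness across every scale of the iteration, at the cost of only covering a single stratum at a time---an acceptable trade since $\Hcal^s$ is countably subadditive over the countable exhaustion $\{\Gamma_m\}_m$.
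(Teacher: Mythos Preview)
The paper does not supply a proof of this lemma; it is quoted directly from \cite[Lemma~2.4]{KPT} and used as a black box. Your argument is correct and is essentially the standard proof one finds in \cite{KPT} (or in Mattila's textbook): stratify by the scale below which $\beta_\Gamma \le \eps$ becomes uniform, then iterate a slab-packing estimate with centers constrained to the stratum, so that the number of children per ball is $\lesssim \eps^{-(n-1)}$ while radii shrink by $2\eps$, giving geometric decay of $\Hcal^s_\delta$ for $s>n-1$. Your emphasis on keeping successive centers in $\Gamma_m$ (rather than merely in $\Gamma$) is exactly the point that makes the iteration go through under a purely pointwise hypothesis.
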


\subsection{Elliptic measure background} \label{s:emb}

Suppose that $A \in L^{\infty}(\Omega; \R^{n\times n})$ is a $\Lambda_{0}$-uniformly elliptic symmetric matrix for some $\Lambda_0 \geq 1$, that is $A^{t} = A$ and
$$
\Lambda_{0}^{-1} |\xi|^{2} \le \langle \xi, A \xi \rangle \le \Lambda_{0} |\xi|^{2} \qquad \text{for every $\xi \in \R^n\setminus \{0\}$.}
$$

If $\Omega$ is an NTA domain (see \ref{d:NTA}) then in particular $\Omega$ is Wiener regular (see \cite{Wie}) and so the work \cite{LSW} guarantees that it is also regular for $L_A = - \divr(A \nabla \cdot )$ and so the elliptic measure for $(\Omega,L_{A})$ with pole at $x$ is well-defined. That is, for $x \in \Omega$ there is a unique probability measure $\omega^{x}$ supported on $\partial\Omega$ so that if {$f \in C(\partial \Omega)$ and $u \in W^{1,2}(\Omega)$ weakly solves $L_{A} u = 0$ with boundary data $f$, then $u(x) = \int f d \omega^{x}$.} 

Note that when discussing regularity properties of $\omega^{\pm}$ up to an $\omega^\pm$-negligible set, we may without loss of generality fix such poles $x^\pm$ arbitrarily, so long as $\Omega^{\pm}$ are Wiener regular, since any pair of elliptic measures in a fixed domain associated to a given uniformly elliptic operator are mutually absolutely continuous, due to the Harnack inequality. {Indeed, suppose that $\Omega$ is Wiener regular, let $x,y \in \Omega$ and suppose that $K$ is a compact set such that $\omega^{x}(K) = 0$. Then for all $\eps > 0$ there exists an open neighborhood $U_{\eps}$ of $K$ with $\omega^{x}(U_{\eps}) < \eps$. Consider the functions $f_{\eps} \in C(\partial \Omega)$ so that $\eye_{K} \le f_{\eps} \le 1$ and $f_{\eps}$ is supported in $U_{\eps}$. Suppose that $\omega^{y}(K) = \eta > 0$. Then, letting $u_\eps$ be a weak solution of
$$
\begin{cases}
    L_{A} u_{\eps} = 0 & \text{ in } \Omega \\
    u_{\eps} = f_{\eps} & \text{ on } \partial \Omega
\end{cases}
$$
it follows from a Harnack chain argument that $u_{\eps}(y) \le C u_{\eps}(x)$ for some constant $C$ which depends on $x,y, \Omega,$ and the ellipticity of $A$. On the other hand, the integral representation of $u_\eps$ yields
$$
\begin{cases}
\eta \le u_{\eps}(y) = \int f_{\eps} d \omega^{y} \\
\eps \ge u_{\eps}(x) = \int f_{\eps} d \omega^{x}
\end{cases}
$$
so that $u_{\eps}(y) \ge \frac{\eta}{\eps} u_{\eps}(x)$ reaching a contradiction for $\eps \le C^{-1} \eta$.

Under more general assumptions than our own, the work of Grüter and Widman \cite{GW} ensures the existence of a Green function.\footnote{In fact they do not require the exterior corkscrew condition, and instead use the CDC condition (see e.g. \cite[Definition 2.6]{TZ}.} The book \cite{kenig1994harmonic} describes the behavior of the elliptic measure and corresponding Green functions. In particular the results proved in \cite{JK} for harmonic functions on NTA domains extend to solutions of $L_{A}$ on NTA domains (or even uniform domains with the CDC) with appropriate adaptations.  Throughout the remainder of the preliminaries, we work under the following assumption.

\begin{assumption}\label{a:main1}
    Let $\Lambda_0, R_{0} > 0$, $n \ge 3$, $M > 1$, and $\Omega \subset \R^{n}$ be an {$(M,R_{0})$- NTA domain.} Let $A\in L^\infty(\Omega;\R^{n\times n})$ be symmetric and $\Lambda_{0}$-elliptic matrix-valued function, and let $L_A = - \diverg(A\nabla \cdot)$. Let $\{\omega^x\}_{x\in \Omega}$ denote the family of elliptic measures associated to $L_A$ in $\Omega$. We refer to $\omega^x$ as the \emph{elliptic measure with pole at $x$}.
\end{assumption}

We summarize below the existence and basic properties of a Greens function. {The existence theory began in the bounded domain setting within \cite{GW} and was extended to systems and unbounded domains in \cite{HofmannKim}. Our presentation most closely matches  \cite[Lemma 3.4]{AzzMou}, which adds Theorem \ref{t:gfp}(4) and was first proven in variable coefficient setting in \cite{azzam2023uniform}.}

\begin{theorem}[{Green functions, \cite[Theorem 2.7]{TZ}}] \label{t:gfp}
Let $\Omega$, $A$, $L_{A}$, and $\{\omega^{x}\}$ be as in Assumption \ref{a:main1}. There exists a unique $G: \Omega \times \Omega \to [0,\infty]$ and a constant $C$ depending on the $(M,R_{0})$ such that the following hold:
\begin{enumerate}
    \item $0 \le G(x,y) \le C |x-y|^{2-n}$,
    \item $G(x,\cdot) \in C\left( \overline{\Omega}, \{x\} \right) \cap W^{1,2}_{\loc}\left(\Omega \setminus \{x\}\right)$ and $G(x, \cdot)|_{\partial \Omega} \equiv 0$,
    \item $G(x,y) = G(y,x)$, and
    \item for every $\varphi \in C_{c}^{\infty}(\R^{n+1})$
    $$
        \int_{\partial \Omega} \varphi d \omega^{x} = \varphi(x) - \int_{\Omega} \Langle A(y) \nabla_{y} G(x,y), \nabla \varphi(y) \Rangle d y
    $$
\end{enumerate}
\end{theorem}

We further recall from \cite{kenig1994harmonic} some basic results concerning the behavior of $L_A$-elliptic measures and functions on NTA domains, which are the analogue of properties derived in \cite{JK} for $L_A = -\Delta$. Our presentation closely follows \cite{TZ}. For $\Omega$ and $A$ as in Assumption \ref{a:main1}, we refer to allowable constants as constants depending only on the dimension $n$, the ellipticity $\Lambda_{0}$, the $L^{\infty}$-norm of $A$, and the NTA constants $(M,R_0)$ of $\Omega$.
Furthermore all of the following results still hold if the exterior corkscrew condition is replaced by the CDC condition.

\begin{lemma}\label{e:boundaryholder}
Let $\Omega$, $A$ and $L_A$ be as in Assumption \ref{a:main1}. There exist constants $\beta > 0$ and $C>0$ depending on allowable constants such that for any $p \in \partial \Omega$, $0< r < \diam \Omega$, the following holds. If $u \in W^{1,2}({\Omega};[0,\infty))$ with $L_A u = 0$ weakly in $B(p,2r) \cap \Omega$ and $u$ vanishes continuously\footnote{Since NTA domains are Wiener regular, it is well-known \cite{Wie,LSW} that solutions with continuous boundary data are continuous at the boundary. In particular, it is not restrictive to assume that $u$ vanishes continuously on $\Delta(p,2r)$.} on $\Delta(p,2r) = B(p,2r) \cap \partial \Omega$, then
$$
u(x) \le Cr^{-\beta}|x-p|^{\beta} \|u\|_{C^0(B(p,2r) \cap \Omega)} \qquad \forall x \in \Omega \cap B(p,r)\,.
$$
\end{lemma}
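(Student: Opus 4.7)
The plan is to prove this estimate by the classical iteration/oscillation decay scheme, which is the standard route for boundary Hölder estimates of non-negative $L_A$-harmonic functions on domains satisfying an exterior corkscrew condition.

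The core step is a \emph{one-scale oscillation decay}: show that there is an allowable constant $\theta\in(0,1)$ such that for every $p\in\partial\Omega$, every $0<\rho<\diam\Omega$, and every non-negative $v\in W^{1,2}(\Omega)$ with $L_A v=0$ weakly in $B(p,2\rho)\cap\Omega$ and $v=0$ continuously on $\Delta(p,2\rho)$, one has
\[
\sup_{B(p,\rho)\cap\Omega} v \;\leq\; \theta\,\sup_{B(p,2\rho)\cap\Omega} v.
\]
To see this, normalize so that $M:=\sup_{B(p,2\rho)\cap\Omega}v=1$, and use the two-sided corkscrew property (Definition~\ref{d:csc}) applied to $\R^n\setminus\overline\Omega$ to select an \emph{exterior} corkscrew ball $B(y,\rho/C_2)\subset B(p,\rho)\cap(\R^n\setminus\overline\Omega)$. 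The quantitative presence of this obstacle forces the harmonic measure (for $L_A$) of $\Delta(p,2\rho)$ to be bounded below, uniformly by allowable constants, at corkscrew points of $B(p,\rho)\cap\Omega$. Combined with the maximum principle (extend $v$ by $0$ through $\Delta(p,2\rho)$ and compare with the $L_A$-harmonic measure of the complement of $\Delta(p,2\rho)$ in $\partial(B(p,2\rho)\cap\Omega)$), this yields $v\le\theta$ on $B(p,\rho)\cap\Omega$ with $\theta<1$ quantitative. This is the standard Bourgain-type estimate, whose proof for $L_A$ on Wiener regular domains with the exterior corkscrew/CDC condition is carried out in \cite{kenig1994harmonic} (see also the analogous estimates used in \cite{JK, TZ}).

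Given the one-scale decay, the rest is a routine dyadic iteration. Setting $r_k:=2^{-k}r$ and $M_k:=\sup_{B(p,r_k)\cap\Omega}u$ for $k\ge 0$, the decay step applied with $\rho=r_k$ gives $M_{k+1}\le \theta M_k$, hence $M_k\le \theta^k M_0=\theta^k\|u\|_{C^0(B(p,2r)\cap\Omega)}$ (after absorbing a harmless factor between $r$ and $2r$ into $C$). Given $x\in B(p,r)\cap\Omega$, choose the unique integer $k\ge 0$ with $r_{k+1}\le |x-p|< r_k$; then
\[
u(x)\;\leq\; M_k\;\leq\; \theta^{k}\|u\|_{C^0(B(p,2r)\cap\Omega)}
\;\leq\; C\Bigl(\tfrac{|x-p|}{r}\Bigr)^{\beta}\|u\|_{C^0(B(p,2r)\cap\Omega)},
\]
with $\beta:=\log_2(1/\theta)>0$ and $C=2^\beta$, which is the desired estimate.

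The main obstacle is the one-scale estimate, i.e.\ producing the quantitative barrier at the exterior corkscrew ball in the $L_A$ setting. For the Laplacian this is the classical argument of \cite{JK}; for variable symmetric coefficients it requires replacing explicit Newtonian potential barriers by an $L_A$-subsolution (or equivalently a capacitary lower bound for the $L_A$-harmonic measure of a ball against its complement). This extension is well-documented in \cite{kenig1994harmonic} and is valid under the Wiener regularity/CDC hypothesis built into our NTA assumption, so no new work is required beyond citing these classical ingredients; the rest of the argument is purely elementary dyadic iteration.
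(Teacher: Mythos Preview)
Your argument is correct and is exactly the standard proof of this boundary H\"older estimate. Note, however, that the paper does not actually supply a proof of this lemma: it is stated as a preliminary result recalled from the literature (specifically \cite{kenig1994harmonic}, following \cite{JK} for the Laplacian and \cite{TZ} for the elliptic setting), so there is no ``paper's own proof'' to compare against. Your write-up is precisely the argument one finds in those references.
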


\begin{corollary}
    Let $\Omega$, $A$, $L_A$ and $\{\omega^x\}_{x\in \Omega}$ be as in Assumption \ref{a:main1}. There exists $m_{0} > 0$ depending on allowable constants such that for any $p \in \partial \Omega$ and $0< r < R_0$,
    $$
        \omega^{x_{p,r}}(B(p,r)\cap\partial\Omega) \ge m_{0}\,,
    $$
    where $x_{p,r}$ is an interior corkscrew point for $\Delta(p,r)$.
\end{corollary}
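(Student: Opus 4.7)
The strategy will be to combine the boundary Hölder decay of Lemma \ref{e:boundaryholder} with the Harnack chain property of $\Omega$. The heuristic is that the $L_A$-harmonic function $x \mapsto \omega^x(\partial \Omega \setminus \Delta(p,r))$ has boundary value $0$ on the open ``cap'' $\Delta(p, r/2) \subset \Delta(p,r)$, and Wiener regularity plus Hölder decay should force it to be quantitatively less than $1$ at interior points sufficiently close to $p$; a Harnack chain then transfers that bound to $x_{p,r}$.

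To implement this, I would first fix a continuous cutoff $\phi : \partial\Omega \to [0,1]$ with $\phi \equiv 1$ on $\partial\Omega \setminus \Delta(p,r)$ and $\phi \equiv 0$ on $\Delta(p, r/2)$, and consider $u(x) := \int \phi \, d\omega^x$. Then $u$ is $L_A$-harmonic in $\Omega$, satisfies $0 \le u \le 1$, and by Wiener regularity of NTA domains vanishes continuously on $\Delta(p, r/2)$. Applying Lemma \ref{e:boundaryholder} with scale $r/4$ in place of $r$ yields $u(x) \le C(r/4)^{-\beta}|x - p|^{\beta}$ on $\Omega \cap B(p, r/4)$. Choosing $\sigma \in (0, 1/4)$ depending only on $C$ and $\beta$ so that $C(4\sigma)^{\beta} \le 1/2$, and evaluating at the corkscrew point $x_{p, \sigma r} \in B(p, \sigma r)$, then yields $u(x_{p, \sigma r}) \le 1/2$. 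Since $\mathbf{1}_{\partial \Omega \setminus \Delta(p,r)} \le \phi$ pointwise on $\partial\Omega$, monotonicity of the elliptic measure will give
\[
\omega^{x_{p, \sigma r}}(\Delta(p, r)) \;=\; 1 - \omega^{x_{p, \sigma r}}(\partial\Omega \setminus \Delta(p,r)) \;\ge\; 1 - u(x_{p, \sigma r}) \;\ge\; 1/2\,.
\]

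It remains to pass from $x_{p, \sigma r}$ to $x_{p, r}$. Both corkscrew points have distance from $\partial \Omega$ at least of order $\sigma r/C_2$, while their mutual distance is at most $2r$, so by the Harnack chain condition they are connected by a chain of balls of length $N = N(C_2, \sigma)$, which depends only on allowable constants. Iterating the interior Harnack inequality for non-negative $L_A$-harmonic functions along this chain, applied to $x \mapsto \omega^x(\Delta(p, r))$, will produce the desired lower bound
\[
\omega^{x_{p, r}}(\Delta(p, r)) \;\ge\; K^{-N}\, \omega^{x_{p, \sigma r}}(\Delta(p, r)) \;\ge\; K^{-N}/2 \;=:\; m_0,
\]
where $K = K(n, \Lambda_0)$ is the Harnack constant. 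The only non-routine input is the quantitative Hölder decay from Lemma \ref{e:boundaryholder}, which is already proved; the rest is a soft use of Wiener regularity, monotonicity of the elliptic measure, and the NTA structure.
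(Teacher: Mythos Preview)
Your argument is correct and is precisely the standard derivation of this estimate from the boundary H\"older decay of Lemma~\ref{e:boundaryholder}. The paper does not actually supply its own proof here: the corollary is part of a block of results explicitly recalled from \cite{kenig1994harmonic,JK,TZ} and stated without justification, its placement immediately after Lemma~\ref{e:boundaryholder} signalling that it is meant to follow from that lemma in exactly the way you describe.
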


\begin{lemma}\label{l:boundaryharnack}
    Let $\Omega$, $A$ and $L_A$ be as in Assumption \ref{a:main1}. There exists $C>0$ depending on allowable constants such that for any $p \in \partial \Omega$ and $0< r < R_0$, the following holds. If $u \in W^{1,2}({\Omega};[0,\infty))$ with $L_A u = 0$ weakly in $B(p,4r)\cap\Omega$ and $u$ vanishes continuously on $\Delta(p,4r)$, then
    $$
        u(x) \le C u(x_{p,r}) \qquad \forall x \in \Omega \cap B(p,r)\,.
    $$
\end{lemma}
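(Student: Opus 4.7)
The conclusion is the classical boundary Harnack (or Carleson) estimate for $L_A$-harmonic functions on NTA domains, due to Jerison--Kenig \cite{JK} in the case $L_A = -\Delta$ and extended to general uniformly elliptic symmetric $L_A$ in \cite{kenig1994harmonic}. My plan is to combine three ingredients already in hand: the interior Harnack inequality for non-negative $L_A$-harmonic functions (with constant depending only on $n$ and $\Lambda_0$, via De~Giorgi--Nash--Moser theory); the Harnack chain condition of Definition \ref{d:hcc}; and the boundary H\"older decay of Lemma \ref{e:boundaryholder}. After rescaling, normalize $r = 1$ and $p = 0$, and write $x_1 := x_{p,r}$, so that $\delta(x_1) \geq 1/C_2$.

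The argument splits at a threshold $\eta > 0$ to be chosen in terms of allowable constants. \emph{Deep case:} if $x \in B(0,1)\cap\Omega$ satisfies $\delta(x) \geq \eta$, then $|x - x_1| \leq 2$ and both $x$ and $x_1$ lie at distance at least $\min(\eta, 1/C_2)$ from $\partial\Omega$. The Harnack chain condition then supplies a chain in $\Omega$ of length $N(\eta)$ joining $x$ to $x_1$, and iterating the interior Harnack inequality along this chain yields $u(x) \leq K_1(\eta)\, u(x_1)$. \emph{Shallow case:} if $\delta(x) < \eta$, pick a nearest boundary point $q \in \partial\Omega$; then $|q| < 1 + \eta < 2$ and $\Delta(q,2) \subset \Delta(0,4)$, so $u$ vanishes on $\Delta(q,2)$. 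Applying Lemma \ref{e:boundaryholder} centered at $q$ with radius $1$ yields
\begin{equation*}
    u(x) \;\leq\; C\, |x - q|^{\beta}\, \sup_{B(q,2)\cap\Omega} u \;\leq\; C\, \eta^{\beta}\, \sup_{B(q,2)\cap\Omega} u.
\end{equation*}

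It remains to bound $\sup_{B(q,2)\cap\Omega} u$ by $u(x_1)$ up to an absolute factor, and this is done by a finite iteration. Any point $z$ realizing (approximately) this supremum that is \emph{deep} (with $\delta(z) \geq \eta$) is again handled by a Harnack chain joining $z$ to $x_1$ inside $B(0,3) \cap \Omega$, yielding $u(z) \leq C u(x_1)$. A \emph{shallow} such $z$ triggers one further application of the boundary H\"older decay at the nearest boundary point of $z$, at a slightly smaller radius chosen so that the relevant boundary ball remains inside $\Delta(0,4)$; this radius stays bounded below by a constant depending only on allowable constants, while contributing another multiplicative factor of order $\eta^{\beta}$. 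Choosing $\eta$ small enough that $C\eta^{\beta} < 1/2$, the resulting geometric series $\sum_k (C\eta^\beta)^k$ converges, so after finitely many iterations all the shallow loss is absorbed and one is reduced to a deep point, giving $u(x) \leq C u(x_1)$ with $C$ depending only on allowable constants. The principal technical obstacle is the careful organization of this iteration and the simultaneous shrinking of scales needed to stay inside the vanishing region $\Delta(0,4)$; the classical NTA dyadic framework is built precisely to handle this interplay, and the variable-coefficient nature of $L_A$ enters the argument only through the universal dependence of the interior Harnack inequality and of Lemma \ref{e:boundaryholder} on $\Lambda_0$ and $\|A\|_{\infty}$.
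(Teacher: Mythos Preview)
The paper does not prove this lemma; it is stated as one of several standard facts imported from \cite{kenig1994harmonic} (extending \cite{JK}) and used as a black box. So there is no ``paper's proof'' to compare against. Your ingredients---interior Harnack along Harnack chains, plus the boundary H\"older decay of Lemma~\ref{e:boundaryholder}---are exactly the right ones, and the deep/shallow dichotomy is the standard opening move.

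The gap is in your iteration for the shallow case. You obtain, at each step, an inequality of the form $S_k \le C\eta^\beta S_{k+1}$, where $S_k$ is a supremum over some ball contained in $B(0,4)$. Convergence of $\sum_k (C\eta^\beta)^k$ is irrelevant: to conclude you need either (a) the iteration to terminate at a deep point after a \emph{bounded} number of steps, or (b) a bound on the terminal $S_K$ in terms of $u(x_1)$. Neither follows from what you wrote. The balls drift by $O(\eta)$ at each step and must remain in $B(0,4)$, so only $O(1/\eta)$ iterations are available while the radii stay bounded below; after those steps the residual $(C\eta^\beta)^{O(1/\eta)} S_K$ still carries the uncontrolled quantity $S_K = \sup_{B(0,4-\epsilon)\cap\Omega} u$, which you have no way to bound by $u(x_1)$ without already knowing the lemma. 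The phrase ``one is reduced to a deep point'' is unjustified: nothing forces the supremum to be attained at a deep point at any stage.

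The classical organization (Jerison--Kenig, or Caffarelli--Fabes--Mortola--Salsa) inverts the logic. Argue by contradiction: normalize $u(x_1)=1$ and suppose $u(y_0)=N$ is very large at some $y_0\in B(0,1)\cap\Omega$. A Harnack chain of length $\lesssim \log(1/\delta(y_0))$ joins $y_0$ to $x_1$, so $N \le C_H^{c\log(1/\delta(y_0))}$, forcing $\delta(y_0)\le N^{-a}$ for some $a>0$ depending only on allowable constants. Now apply Lemma~\ref{e:boundaryholder} at the nearby boundary point at scale $\rho_0 \sim \delta(y_0)^{1/2}$: this produces $y_1$ with $u(y_1)\ge cN^{1+a\beta/2}$ and $|y_1-y_0|\lesssim \delta(y_0)^{1/2}\lesssim N^{-a/2}$. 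Iterating, the values $u(y_k)$ grow super-geometrically while the displacements $|y_{k+1}-y_k|$ are summable, so all $y_k$ remain in $B(0,2)$; this contradicts local boundedness of $u$ on $\overline{B(0,2)}\cap\overline\Omega$. The point you were missing is that the smallness of $\delta$ is \emph{forced} by the largeness of $u$, and it is this coupling---not a fixed threshold $\eta$---that makes the iteration close.
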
 

\begin{lemma}\label{l:cfms}
Let $\Omega$, $A$, $L_A$ and $\{\omega^x\}_{x\in \Omega}$ be as in Assumption \ref{a:main1}. There exists $C > 0$ depending on allowable constants such that for all $p \in \partial \Omega$ and $0 < r < R_{0}/M$ if $L_{A} u = 0$ weakly in $B(p,4r) \cap \Omega$ and $u$ vanishes continuously on $\Delta(p,4r)$, then
    \begin{equation}
        C^{-1} \le \frac{\omega^{x}(\Delta(p,r))}{r^{n-2} u(x_{p,r})} \le C \qquad \text{for any } x \in \Omega \setminus B(p, 4r)\,.
    \end{equation}
    In particular, this holds when  $u(\cdot) = G(\cdot, x)$ is the Green function of $L_A$ in $\Omega$ with pole at $x$ as in Theorem \ref{t:gfp}.
\end{lemma}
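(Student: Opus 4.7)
The statement is the classical Caffarelli--Fabes--Mortola--Salsa estimate, relating the $\omega^x$-mass of a surface ball to the value of a positive $L_A$-harmonic function at the interior corkscrew point. My plan is to establish the estimate first for the Green function with pole $x$, namely $\omega^x(\Delta(p,r)) \asymp r^{n-2} G(x_{p,r}, x)$ for $x \in \Omega \setminus B(p, 4r)$, and then reduce the general case to this via a boundary comparison argument.

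For the Green function upper bound I would take a cutoff $\varphi \in C^\infty_c(B(p, 2r))$ with $\varphi \equiv 1$ on $\overline{B(p, r)}$, so that $\omega^x(\Delta(p, r)) \le u_\varphi(x)$ for $u_\varphi$ the $L_A$-harmonic extension of $\varphi$. In the exterior region $\Omega \setminus \overline{B(p, 2r)}$, both $u_\varphi$ and $G(\cdot, x)$ are $L_A$-harmonic, vanish on $\partial\Omega \setminus \overline{\Delta(p, 2r)}$, and on $\partial B(p, 2r) \cap \Omega$ one has $u_\varphi \le 1$ and $G(\cdot, x) \gtrsim r^{2-n}$ using Theorem \ref{t:gfp}(4) together with interior Harnack chaining from $x_{p, 2r}$. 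The maximum principle then yields $u_\varphi \le C r^{n-2} G(\cdot, x)$ throughout $\Omega \setminus \overline{B(p, 2r)}$; evaluating at $x$ and Harnack-chaining $G(x_{p, 2r}, x) \asymp G(x_{p, r}, x)$ gives the claim. The matching lower bound is obtained by reversing the comparison: take a bump $\tilde\varphi$ supported in $\Delta(p, r/2)$ with $\int \tilde\varphi \, d\omega^{x_{p, r}} \ge c \, m_0$ (using the corollary preceding Lemma \ref{l:boundaryharnack}, together with the boundary H\"older decay of Lemma \ref{e:boundaryholder} to control $u_{\tilde\varphi}$ near $\partial\Omega \setminus \overline{\Delta(p, r/2)}$), and apply the maximum principle in the same exterior region to obtain $u_{\tilde\varphi} \ge C^{-1} r^{n-2} G(\cdot, x)$; then $\omega^x(\Delta(p, r)) \ge u_{\tilde\varphi}(x)$ completes the bound.

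For general $u$ as in the statement, I would invoke the boundary comparison principle (a standard consequence of Lemma \ref{l:boundaryharnack} combined with Lemma \ref{e:boundaryholder}): both $u$ and $G(\cdot, x)$ are positive $L_A$-harmonic in $B(p, 4r) \cap \Omega$ and vanish continuously on $\Delta(p, 4r)$, hence the ratio $u(y)/G(y, x)$ is uniformly comparable to $u(x_{p, r})/G(x_{p, r}, x)$ throughout $B(p, r) \cap \Omega$, transferring the estimate from the Green function case. The main technical obstacle is executing the maximum-principle comparisons on the macroscopically large exterior region $\Omega \setminus \overline{B(p, 2r)}$ while keeping all constants in terms of only allowable quantities; this is handled by systematic application of the Harnack chain condition (Definition \ref{d:hcc}), which allows us to compare values of positive $L_A$-harmonic functions at any two interior points of $\Omega$ lying at comparable distance from $\partial\Omega$ at the cost of a dimensional factor. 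In essence, this is the adaptation of \cite{JK} to $L_A$-elliptic measure on NTA domains, following \cite{kenig1994harmonic} and as recalled in \cite{TZ}.
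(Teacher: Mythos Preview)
The paper does not supply a proof of this lemma: it is listed in Section~\ref{s:prelim} among several preliminary estimates recalled from \cite{kenig1994harmonic} (the elliptic-operator adaptation of the NTA theory of \cite{JK}; see also \cite{TZ}), and is used as a black box thereafter. There is therefore no argument in the paper to compare your sketch against; what you have written is a reconstruction of the classical CFMS argument from the literature.

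Your sketch is in the right spirit, but the upper-bound comparison is set up incorrectly. You propose to compare $u_\varphi(\cdot)$ and $G(\cdot,x)$ on the exterior region $\Omega\setminus\overline{B(p,2r)}$, yet the pole $x\in\Omega\setminus B(p,4r)$ lies inside that region, so $G(\cdot,x)$ is not $L_A$-harmonic there, and ``evaluating at $x$'' yields only the trivial bound $u_\varphi(x)\le Cr^{n-2}G(x,x)=\infty$. Moreover, the claimed uniform lower bound $G(y,x)\gtrsim r^{2-n}$ on $\partial B(p,2r)\cap\Omega$ fails when $y$ is close to $\partial\Omega$. The standard route instead treats both sides as functions of the \emph{pole}: $x\mapsto\omega^x(\Delta(p,r))$ and $x\mapsto G(x_{p,r},x)=G(x,x_{p,r})$ (using symmetry of $A$) are $L_A$-harmonic in $\Omega\setminus\overline{B(p,2r)}$ and vanish on $\partial\Omega\setminus\overline{\Delta(p,2r)}$; the boundary comparison principle then pins their ratio by its value at a reference corkscrew point, where the nondegeneracy corollary and Theorem~\ref{t:gfp}(4) apply. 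Finally, note that the displayed estimate cannot hold for an \emph{arbitrary} nonnegative $L_A$-harmonic $u$ vanishing on $\Delta(p,4r)$, since it is not invariant under $u\mapsto cu$; the only case invoked in the paper is $u=G(\cdot,x)$ (cf.\ \eqref{e:fsb1}), and your boundary-comparison reduction does not recover the general statement either, because comparability of the ratio $u/G(\cdot,x)$ says nothing about its value.
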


\begin{lemma}\label{l:doubling}
    Let $\Omega$, $A$, $L_A$ and $\{\omega^x\}_{x\in \Omega}$ be as in Assumption \ref{a:main1}. There exists $C > 0$ depending on allowable constants such that for all $p \in \partial \Omega$ and $0 < r < R_0/2M$, the following holds. If $y \in \Omega \setminus B(p, 2Mr)$, then for $s \in (r/2,r)$,
    $$
        \omega^{y}(B(p,r)) \le C \omega^{y}(B(p,s))\,.
    $$
\end{lemma}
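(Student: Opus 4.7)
The plan is to reduce the doubling bound for $\omega^{y}$ to a Harnack-type comparison for the Green function $G(\cdot,y)$ at two nearby corkscrew points, by using the CFMS estimate (Lemma \ref{l:cfms}) as a bridge between elliptic measure and Green function values. Since $y \in \Omega \setminus B(p,2Mr)$ and $s \in (r/2,r)$, as long as $M$ is taken large enough (say $M \geq 2$), we have $y \in \Omega \setminus B(p,4r)$ and $y \in \Omega \setminus B(p,4s)$, so the non-negative function $u(\cdot) = G(\cdot,y)$ is $L_A$-harmonic in $B(p,4r)\cap\Omega$ and vanishes continuously on $\Delta(p,4r)$. This puts us in position to apply Lemma \ref{l:cfms} at both radii.

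Applying Lemma \ref{l:cfms} at scales $r$ and $s$ yields
\[
    C^{-1} r^{n-2} G(x_{p,r},y) \leq \omega^{y}(\Delta(p,r)) \leq C r^{n-2} G(x_{p,r},y),
\]
and the analogous two-sided comparison with $r$ replaced by $s$. Since $s\in (r/2,r)$, the prefactors satisfy $s^{n-2} \geq 2^{-(n-2)} r^{n-2}$, so it remains to compare the Green function values $G(x_{p,r},y)$ and $G(x_{p,s},y)$ at the interior corkscrew points. The goal reduces to showing
\[
    G(x_{p,r},y) \leq C' G(x_{p,s},y),
\]
with $C'$ depending only on allowable constants.

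To establish this inequality I would invoke the Harnack chain condition (Definition \ref{d:hcc}): both $x_{p,r}$ and $x_{p,s}$ lie in $\Omega \cap B(p,r)$ with $\delta_{\Omega}(x_{p,r}) \geq r/C_0$ and $\delta_{\Omega}(x_{p,s}) \geq s/C_0 \geq r/(2C_0)$, and their mutual distance is at most $2r$. Thus they are connected by a Harnack chain of balls in $\Omega$ whose length depends only on the NTA constants and whose union lies in $\Omega \cap B(p, C_2 r)$ at definite distance from $\partial\Omega$. Because $y \notin B(p, 2Mr)$, for $M$ chosen large enough depending on the NTA constants this Harnack chain stays uniformly away from the pole $y$; consequently $G(\cdot,y)$ is a non-negative $L_A$-harmonic function on a fixed neighborhood of the chain. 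Iterating the interior Harnack inequality along this chain produces the desired comparison $G(x_{p,r},y) \leq C' G(x_{p,s},y)$, and combining with the CFMS comparisons above yields $\omega^{y}(B(p,r)) \leq C \omega^{y}(B(p,s))$. The main technical obstacle is the bookkeeping that simultaneously keeps the Harnack chain inside $\Omega$, at positive distance from $\partial\Omega$, and away from the pole $y$; this is precisely what the quantitative choice $y \in \Omega \setminus B(p,2Mr)$ guarantees.
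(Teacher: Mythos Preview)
Your argument is correct and is precisely the standard route to the doubling property in the NTA setting: pass from $\omega^y(\Delta(p,r))$ to $r^{n-2}G(x_{p,r},y)$ via the CFMS comparison (Lemma \ref{l:cfms}), and then compare $G(x_{p,r},y)$ with $G(x_{p,s},y)$ by a Harnack chain of bounded length whose balls stay in $B(p,Cr)\cap\Omega$ and hence away from the pole $y$ once $M$ is chosen large in terms of the NTA constants. The only point worth tightening is the assertion that the Harnack chain is contained in $B(p,C r)$: this follows because consecutive balls overlap and each has diameter comparable to its distance to $\partial\Omega$, so radii grow at most geometrically along the (boundedly long) chain; you state this but do not argue it.

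Note that the paper does not actually supply its own proof of Lemma \ref{l:doubling}: it is listed among the standard facts recalled from \cite{kenig1994harmonic} (following \cite{JK} in the harmonic case, with presentation as in \cite{TZ}). Your proof is exactly the one underlying those references, so there is nothing to contrast.
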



\subsection{\texorpdfstring{Tangent measures and $\Lambda$-tangents}{Tangent measures and Lambda-tangents}} \label{ss:tangents}
For a compact set $K \subset \R^{n}$, and two Radon measures $\mu,\nu$ we define
\begin{equation} \label{e:F}
F_{K}(\mu,\nu) = \sup \left\{ \left|\int f d(\mu-\nu) \right|\mid \Lip(f) \le 1,  ~ f \in C_{c}(K) \right\}.
\end{equation}
If $K = B_{r}$, we simply write $F_{r}( \cdot, \cdot)$. We recall, see \cite[Lemma 14.13]{Mattila} that for a sequence of Radon measures $\{\mu_{k}\}$ and a Radon measure $\mu$,
\begin{equation} \label{e:allradii}
\mu_{k} \xrightharpoonup{*} \mu \iff \lim_{k \to \infty} F_{r}(\mu_{k},\mu) = 0 \quad \forall r > 0.
\end{equation} 
It is well-known, see \cite[Proposition 1.12]{Preiss}, that
\begin{equation*}
F(\mu,\nu) := \sum_{\ell=1}^{\infty} 2^{-\ell} \min \{1, F_{\ell}(\mu,\nu)\}
\end{equation*}
defines a metric on the space of Radon measures. Moreover, $F$ generates the topology of weak-$*$ convergence. We denote $F(\mu) = F(\mu,0)$ and $F_r(\mu) = F_r(\mu,0)$.  

Throughout the remainder of this section we will suppose that $\Lambda : \R^{n} \to \R^{n \times n}$ is an invertible matrix-valued function. We denote this by writing $\Lambda : \R^{n} \to GL(n,\R)$.

 The notion of tangent measures that we will summarize stems from Preiss' resolution of the density question \cite{Preiss}. The extension of Preiss' machinery to the elliptic setting was introduced and first used in \cite{GTW-Lambda} by Casey, the first author, Toro, and Wilson. The presentation here will follow most similarly to \cite{GTW-Lambda}, but we emphasize that the case where $\Lambda \equiv \id$ always reduces to Preiss' work.

 Given an invertible matrix-valued function $\Lambda : \R^{n} \to GL(n,\R)$ fixed throughout this section, we consider the ellipse centered at $p$
 $$
        E_{\Lambda}(p,r) = p +  \Lambda(p)B(0,r),
 $$
 whose eccentricity depends on $p$. We further consider the rescalings
 $$
T^{\Lambda}_{p,r}(y) = \Lambda(p)^{-1} \left( \frac{y-p}{r} \right),
$$
and the associated pushforward measures $T_{p,r}^{\Lambda}[\mu]$. The latter in particular satisfy
$$
T^{\Lambda}_{p,r}[\mu](B_{1}) = T_{p,r}[\mu](E_{\Lambda}(0,1)) = \mu\left(E_{\Lambda}(p,r)\right).
$$

\begin{definition}[$\Lambda$-tangents]
If $\mu$ is a Radon measure on $\R^n$, $p\in \spt\mu$ and $\Lambda:\R^n\to GL(n,\R)$, we define the \emph{class of $\Lambda$-tangents to $\mu$ at $p$} as
\begin{equation} \label{e:lambdatan}
\Tan_{\Lambda}(\mu,p) := \left\{\nu  \text{ Radon s.t. } \nu = \lim_{i} c_{i} T^{\Lambda}_{p,r_{i}}[\mu]  : c_{i} > 0, \, r_{i} \downarrow 0, \, \nu \neq 0 \right\}.
\end{equation}

In the special case where $\Lambda = \id$, the class of $\Lambda$-tangents at a given point is simply the class of tangent measures and thus we simply write $\Tan(\mu,p)$; see the discussion in the introduction. Furthermore, we denote by $\Tan_{\Lambda}[\mu]$ the weak-$*$ closure of the space of measures $\cup_{p \in \spt \mu} \Tan_{\Lambda}(\mu,p)$. We essentially only consider this space in the case of $\Tan_{\id}[\mu]$ which we will simply denote as $\Tan[\mu]$.
\end{definition}

\begin{definition}[Cones, $d$-cones, and basis] \label{d:dcone}
A collection of nonzero Radon measures $\cM$ is called a \emph{cone} if 
$$
    \mu \in \cM \implies c \mu \in \cM \qquad \text{for all $c > 0$\,.}
$$
A cone of Radon measures is called a  $d$-\emph{cone} or dilation cone if
$$
    \mu \in \cM \implies T_{0,r}[\mu] \in \cM \qquad \text{for all $r > 0$\,.}
$$
The \emph{basis} of a $d$-cone is the collection of $\mu \in \cM$ so that $F_{1}(\mu) = 1$. 
A $d$-cone $\cM$ is said to have a \emph{closed} (respectively \emph{compact}) \emph{basis} if the basis is closed (respectively compact) with respect to the weak-$*$ topology.
\end{definition}

We next introduce a notion of the distance to a $d$-cone. Let $\cM$ be a $d$-cone and $\nu$ a Radon measure in $\R^{n}$. If $s> 0$ and $0< F_{s}(\nu) < \infty$ we define the \emph{distance} between $\nu$ and $\cM$ at scale $s$ by
\begin{equation} \label{e:ds}
d_{s}(\nu,\cM) = \inf \left\{ F_{s} \left( \frac{\nu}{F_{s}(\nu)}, \mu \right) : \mu \in \cM ~ \text{and} ~ F_{s}(\mu) = 1 \right\}\,.
\end{equation}
If $F_{s}(\nu) \in \{0, \infty\}$, we define $d_{s}(\nu,\cM) = 1$.

\begin{proposition} \cite[Remarks 2.1 \& 2.2]{KPT} \label{p:kptrs}
If $\mu, \nu$ are Radon measures, 
\begin{equation} \label{e:fscaling}
F_{r}(\mu,\nu) = r F_{1}(T_{0,r} [\mu], T_{0,r}[\nu]).
\end{equation}

If $\cM$ is a $d$-cone and $\nu$ a Radon measure,
\begin{enumerate}
\item[i)] $d_{s}(\nu, \cM) \le 1$ for all $s > 0$.
\item[ii)] $d_{s}(\nu, \cM) = d_{1} \left( T_{0,s}[\nu],\cM\right)$ for all $s > 0$.
\item[iii)] If $\nu_{i} \xrightharpoonup{*} \nu$ and $F_{s}(\nu) > 0$, then $d_{s}(\nu,\cM) = \lim_{i \to \infty} d_{s}(\nu_{i},\cM)$.
\end{enumerate}
\end{proposition}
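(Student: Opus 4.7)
The plan is to verify each of the four items in order, with (1) being a direct change of variables, (i) an elementary estimate using the cone property, (ii) a consequence of (1) together with the $d$-cone structure, and (iii) a continuity argument built on (1).

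For the scaling identity (1), I would set up a bijection between admissible test functions. Given $f\in C_c(B_r)$ with $\Lip(f)\le 1$, define $g(y):=r^{-1}f(ry)$; then $g\in C_c(B_1)$ with $\Lip(g)\le 1$, and every such $g$ arises from a unique such $f$. By definition of the pushforward,
\begin{equation*}
    \int f\,d(\mu-\nu) \;=\; \int rg(y)\,d\bigl(T_{0,r}[\mu]-T_{0,r}[\nu]\bigr)(y) \;=\; r\int g\,d\bigl(T_{0,r}[\mu]-T_{0,r}[\nu]\bigr),
\end{equation*}
and taking suprema on both sides yields $F_r(\mu,\nu) = rF_1(T_{0,r}[\mu], T_{0,r}[\nu])$. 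For (i), the cases $F_s(\nu)\in\{0,\infty\}$ are immediate from the definition. When $0 < F_s(\nu) < \infty$, the cone property of $\cM$ provides some $\tilde\mu\in\cM$ with $F_s(\tilde\mu)=1$; the bound $F_s\bigl(\nu/F_s(\nu),\tilde\mu\bigr)\le 1$ is then obtained by testing $f\in C_c(B_s)$ with $\Lip(f)\le 1$, exploiting non-negativity of both normalized measures and splitting $f=f^+-f^-$.

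For (ii), I would apply (1) to the pair $(\nu/F_s(\nu),\mu)$ and use $F_s(\nu)=sF_1(T_{0,s}[\nu])$ to cancel the factor $s$ that appears after rescaling. An elementary manipulation yields
\begin{equation*}
    F_s\bigl(\nu/F_s(\nu),\,\mu\bigr) \;=\; F_1\Bigl(T_{0,s}[\nu]/F_1(T_{0,s}[\nu]),\; s\,T_{0,s}[\mu]\Bigr).
\end{equation*}
Because $\cM$ is both a cone and a $d$-cone, the map $\mu\mapsto s\,T_{0,s}[\mu]$ is a bijection from $\{\mu\in\cM:F_s(\mu)=1\}$ onto $\{\mu'\in\cM:F_1(\mu')=1\}$. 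Taking the infimum on both sides then gives $d_s(\nu,\cM)=d_1(T_{0,s}[\nu],\cM)$.

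For (iii), I would exploit continuity of $F_s$ under weak-$*$ convergence. The assumption $F_s(\nu)>0$ ensures that $F_s(\nu_i)\to F_s(\nu)$, so the normalizations $\nu_i/F_s(\nu_i)$ converge weakly-$*$ to $\nu/F_s(\nu)$, and consequently $F_s(\nu_i/F_s(\nu_i),\mu)\to F_s(\nu/F_s(\nu),\mu)$ for every fixed competitor $\mu\in\cM$ with $F_s(\mu)=1$. Taking $\limsup$ in the bound $d_s(\nu_i,\cM)\le F_s(\nu_i/F_s(\nu_i),\mu)$ and then infimum over $\mu$ gives $\limsup_i d_s(\nu_i,\cM)\le d_s(\nu,\cM)$. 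For the reverse inequality, given any $\eta>0$ I would select near-optimal competitors $\mu_i\in\cM$ with $F_s(\mu_i)=1$ and $F_s(\nu_i/F_s(\nu_i),\mu_i)\le d_s(\nu_i,\cM)+\eta$, and pass to a weak-$*$ limit along a subsequence. The main obstacle is this compactness step: I would handle it by using that the condition $F_s(\mu_i)=1$, combined with the positivity $F_s(\nu)>0$, precludes the near-optimal competitors from having their mass escape to infinity (which would otherwise force $F_s$ distance to the origin to vanish, contradicting near-optimality for a $\nu$ with positive $F_s$-mass), so the limiting measure $\mu_\infty$ remains in $\cM$ with $F_s(\mu_\infty)=1$ and realizes $d_s(\nu,\cM)\le\liminf_i d_s(\nu_i,\cM)+\eta$.
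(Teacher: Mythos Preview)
The paper does not supply its own proof of this proposition; it is quoted from \cite{KPT}. Your arguments for the scaling identity and for (ii) are correct and standard, and (i) is fine under the usual convention (implicit in \cite{Preiss} and \cite{KPT}) that the test functions in the definition of $F_K$ are non-negative.

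For (iii), however, your compactness step has a genuine gap and cannot be repaired as written. The normalization $F_s(\mu_i)=1$ controls only $\int (s-|x|)^+\,d\mu_i$ and gives no upper bound on $\mu_i(B_s)$: for instance $\mu_i=i\,\delta_{x_i}$ with $|x_i|=s-1/i$ satisfies $F_s(\mu_i)=1$ while $\mu_i(B_s)=i\to\infty$. Your heuristic about ``mass escaping to infinity'' addresses the wrong failure mode; the issue is mass piling up near $\partial B_s$. Separately, even if a subsequential limit $\mu_\infty$ existed, the proposition does not assume $\cM$ is closed, so you cannot conclude $\mu_\infty\in\cM$. The fix is much simpler than compactness: use the triangle inequality for $F_s$ directly. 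Writing $\sigma_i:=\nu_i/F_s(\nu_i)\xrightharpoonup{*}\sigma:=\nu/F_s(\nu)$, for each near-optimal $\mu_i\in\cM$ with $F_s(\mu_i)=1$ one has
\[
d_s(\nu,\cM)\;\le\;F_s(\sigma,\mu_i)\;\le\;F_s(\sigma,\sigma_i)+F_s(\sigma_i,\mu_i)\;\le\;F_s(\sigma,\sigma_i)+d_s(\nu_i,\cM)+\eta,
\]
and since $F_s(\sigma,\sigma_i)\to 0$, letting $i\to\infty$ and then $\eta\downarrow 0$ yields $d_s(\nu,\cM)\le\liminf_i d_s(\nu_i,\cM)$ with no compactness or closedness required.
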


\begin{remark}
Given $\nu \in \Tan_{\Lambda}(\mu,p)$ with $c_{i} T^{\Lambda}_{p,r_{i}}[\mu] \xrightharpoonup{*} \nu$ it is easy to check that $c T_{0,r}[\nu] = \lim_{i} c c_{i} T^{\Lambda}_{p, rr_{i}}[\mu]$ for any $c , r > 0$. In particular $\Tan_{\Lambda}(\mu,p)$ is a $d$-cone.
\end{remark}

A crucial property of $\Lambda$-tangents (and in particular tangent measures) is that tangents to $\Lambda$-tangents are $\Lambda$-tangents. This is a consequence of the following theorem.

\begin{theorem}{\cite[Theorem 3.3]{GTW-Lambda}} \label{t:tan2ltan}
Let $\mu$ be a Radon measure on $\R^{n}$ and $\Lambda : \R^{n} \to GL(n,\R)$. Then at $\mu$-a.e. $p \in \R^{n}$, every $\nu \in \Tan_{\Lambda}(
\mu,p)$ has the following two properties:
\begin{enumerate}
\item $T_{x,r}[\nu] \in \Tan_{\Lambda}(\mu,p)$ for all $x \in \spt \nu, r > 0$.
\item $\Tan(\nu,x) \subset \Tan_{\Lambda}(\mu,p)$ for all $x \in \spt \nu$.
\end{enumerate}
\end{theorem}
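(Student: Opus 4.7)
The plan is to reduce the statement to Preiss' classical theorem on tangent measures \cite{Preiss} via a fixed linear change of variables, exploiting the crucial fact that only the single value $\Lambda(p)$ appears in the definition of $\Tan_{\Lambda}(\mu,p)$. Fix $p \in \R^n$ and set $L := \Lambda(p)^{-1}$, viewed as a fixed invertible linear map on $\R^n$; then by construction $T^{\Lambda}_{p,r} = L \circ T_{p,r}$. Since $L$ is a linear homeomorphism, the pushforward $\sigma \mapsto L[\sigma]$ is a weak-$*$ homeomorphism of Radon measures, and the identity $T^{\Lambda}_{p,r}[\mu] = L[T_{p,r}[\mu]]$ together with this continuity yields the bijection
\[
    \Tan_{\Lambda}(\mu,p) \;=\; \bigl\{\,L[\sigma]\,:\,\sigma \in \Tan(\mu,p)\,\bigr\}.
\]
I would then record the elementary commutation relation $T_{x,r}\circ L = L\circ T_{\Lambda(p)x,r}$, a direct computation from the definitions, together with the fact that $\spt L[\sigma] = L(\spt\sigma)$; the latter means $x \in \spt \nu$ corresponds exactly to $\Lambda(p)x \in \spt \sigma$ whenever $\nu = L[\sigma]$.

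With these algebraic ingredients in hand, I would invoke Preiss' classical result: for $\mu$-a.e.\ $p \in \R^n$, every $\sigma \in \Tan(\mu,p)$ satisfies both $T_{a,r}[\sigma] \in \Tan(\mu,p)$ for all $a \in \spt \sigma$, $r>0$, and $\Tan(\sigma,a) \subset \Tan(\mu,p)$ for all $a \in \spt \sigma$. Let $p$ be such a point, fix $\nu \in \Tan_{\Lambda}(\mu,p)$, and set $\sigma := L^{-1}[\nu] \in \Tan(\mu,p)$. For property (1), given $x \in \spt \nu$ and $r>0$, taking pushforwards in the commutation relation gives
\[
    T_{x,r}[\nu] \;=\; T_{x,r}[L[\sigma]] \;=\; L\bigl[T_{\Lambda(p)x,r}[\sigma]\bigr],
\]
and Preiss' first conclusion places the bracketed measure in $\Tan(\mu,p)$, so $T_{x,r}[\nu] \in L[\Tan(\mu,p)] = \Tan_{\Lambda}(\mu,p)$. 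For property (2), any $\tau \in \Tan(\nu,x)$ arises as $\tau = \lim_j c_j T_{x,r_j}[\nu] = L\bigl[\lim_j c_j T_{\Lambda(p)x,r_j}[\sigma]\bigr]$ by the commutation relation and weak-$*$ continuity of $L[\cdot]$; the inner limit is nonzero since $L$ is injective and $\tau \neq 0$, so it is a classical tangent to $\sigma$ at $\Lambda(p)x$, which by Preiss' second conclusion lies in $\Tan(\mu,p)$, whence $\tau \in \Tan_{\Lambda}(\mu,p)$.

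The main subtlety is essentially bookkeeping: verifying that the fixed pushforward by $L$ commutes correctly both with the affine rescaling maps $T_{a,r}$ and with weak-$*$ limits, after which the $\Lambda$-theorem collapses to a single application of Preiss' theorem on the classical tangent $\sigma = L^{-1}[\nu]$. It is worth emphasizing that no continuity or quasi-continuity assumption on $\Lambda$ is required, precisely because at a fixed base point $p$ the definition of $\Tan_{\Lambda}(\mu,p)$ only ever sees the pointwise value $\Lambda(p)$ and not the behavior of $\Lambda$ near $p$.
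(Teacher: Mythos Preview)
The paper does not supply its own proof of this theorem: it is quoted verbatim as \cite[Theorem 3.3]{GTW-Lambda} and used as a black box, so there is no in-paper argument to compare against. That said, your reduction is correct and is precisely the mechanism the paper itself advertises via the Isomorphism Lemma (Lemma~\ref{l:iso}): the bijection $\Tan_{\Lambda}(\mu,p)=\Lambda(p)^{-1}[\Tan(\mu,p)]$ you derive is exactly the equivalence of (1) and (2) there, and your commutation identity $T_{x,r}\circ L = L\circ T_{\Lambda(p)x,r}$ together with the weak-$*$ continuity of pushforward by a fixed linear isomorphism cleanly transports both conclusions of Preiss' classical tangents-to-tangents theorem to the $\Lambda$-setting. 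Your remark that no regularity of $\Lambda$ is needed, because only the frozen value $\Lambda(p)$ enters, is also on point and matches the spirit of the paper's use of $\Lambda$-tangents as a rigid linear change of variables applied after blow-up.
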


$\Lambda$-tangents were introduced as a tool to transform anisotropic information of a base measure into an isotropic form for the $\Lambda$-tangents. It turns out this is equivalent to taking a tangent measure, and then performing a linear rigid change of variables. In this paper, we use the latter approach in order to allow access to all the estimates from Section \ref{s:emb} while performing a blow-up without restating Section \ref{s:emb} in terms of ellipses (which would also be possible). To formalize this geometric intuition, we recall the Isomorphism Lemma:

\begin{lemma}{\cite[Lemma 3.4]{GTW-Lambda}} \label{l:iso}
Let $\mu$ be a Radon measure on $\R^{n}$ and $\Lambda : \R^{n} \to GL(n,\R)$. For a Radon measure $\nu$, the following are equivalent:
\begin{enumerate}
\item $\nu \in \Tan_{\Lambda}(\mu,p)$
\item $\Lambda(p)_{\sharp} \nu \in \Tan(\mu,p)$
\item $\nu \in \Tan( (\Lambda(p)^{-1})_{\sharp} \mu,  \Lambda(p)^{-1} p)$
\end{enumerate}
\end{lemma}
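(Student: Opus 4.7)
The plan is to reduce all three equivalences to a single algebraic identity relating the anisotropic rescaling $T^{\Lambda}_{p,r}$ to the isotropic rescaling $T_{p,r}$ through the linear isomorphism $\Lambda(p)^{-1}$, and then propagate this identity to pushforward measures. Once the identity is in hand, the equivalences follow by observing that pushforward under a linear homeomorphism is weak-$*$ continuous.

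First I would verify the two factorizations
\[
    T^{\Lambda}_{p,r} \;=\; \Lambda(p)^{-1} \circ T_{p,r} \;=\; T_{\Lambda(p)^{-1}p,\,r} \circ \Lambda(p)^{-1},
\]
each of which is a one-line computation from the definition $T^{\Lambda}_{p,r}(y) = \Lambda(p)^{-1}\bigl((y-p)/r\bigr)$. Applying the functoriality of pushforwards, this yields
\[
    T^{\Lambda}_{p,r}[\mu] \;=\; (\Lambda(p)^{-1})_\sharp\bigl( T_{p,r}[\mu] \bigr) \;=\; T_{\Lambda(p)^{-1}p,\,r}\bigl[(\Lambda(p)^{-1})_\sharp \mu \bigr].
\]
This is the only computation in the proof; everything else is a formal consequence.

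For the equivalence of (1) and (2), the plan is to use the first factorization together with the fact that the linear bijection $\Lambda(p):\R^n\to\R^n$ is a homeomorphism, so $\Lambda(p)_\sharp$ and $(\Lambda(p)^{-1})_\sharp$ are mutually inverse homeomorphisms of the space of Radon measures with the weak-$*$ topology (this is immediate from the change of variables $\int f\,d(\Lambda(p)_\sharp\mu) = \int (f\circ\Lambda(p))\,d\mu$, noting $f\circ\Lambda(p)\in C_c(\R^n)$ whenever $f\in C_c(\R^n)$). If $c_i T^{\Lambda}_{p,r_i}[\mu]\xrightharpoonup{*}\nu$ then applying $\Lambda(p)_\sharp$ term-by-term and using the identity above gives $c_i T_{p,r_i}[\mu]\xrightharpoonup{*}\Lambda(p)_\sharp\nu$, so $\Lambda(p)_\sharp\nu\in\Tan(\mu,p)$; the reverse direction is the same argument with $(\Lambda(p)^{-1})_\sharp$. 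For the equivalence of (1) and (3), no pushforward needs to be applied at all: the second factorization above shows that the two defining sequences $c_i T^{\Lambda}_{p,r_i}[\mu]$ and $c_i T_{\Lambda(p)^{-1}p,\,r_i}[(\Lambda(p)^{-1})_\sharp \mu]$ coincide identically, so they have exactly the same weak-$*$ limits along $r_i\downarrow 0$ with the same normalizing constants $c_i$.

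I do not anticipate any real obstacle: the statement is essentially a bookkeeping lemma, and the only mildly delicate point is keeping track of the base point shift from $p$ to $\Lambda(p)^{-1}p$ in (3), which is made transparent by the second factorization. One should of course verify that $\nu\neq 0$ is preserved under the linear pushforwards and that $p\in\spt\mu$ translates to $\Lambda(p)^{-1}p\in\spt((\Lambda(p)^{-1})_\sharp\mu)$, both of which are immediate from the fact that $\Lambda(p)^{\pm 1}$ are homeomorphisms.
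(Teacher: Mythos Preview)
Your proposal is correct; the two factorizations of $T^{\Lambda}_{p,r}$ are exactly the right observation, and the rest is indeed bookkeeping. Note, however, that the paper does not actually supply its own proof of this lemma: it is quoted verbatim from \cite[Lemma~3.4]{GTW-Lambda}, so there is no in-paper argument to compare against---your argument is the natural one and presumably matches what appears in that reference.
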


\begin{remark}
    In Theorem \ref{t:decomp} (i), one of our goals is to conclude that $\Tan(\mu,p) \subset \cF$, the latter being the space of flat measures. Since for any invertible matrix $\Lambda(p)$, $\Lambda(p)_{\sharp} \sigma \in \cF$ for all $\sigma \in \cF$, the equivalence of (1) and (2) in Lemma \ref{l:iso} immediately shows that this conclusion in Theorem \ref{t:decomp} can equivalently be stated for $\Tan_{\Lambda}(\mu,p)$ in place of $\Tan(\mu,p)$.
\end{remark}

The next lemma states that $\Tan_{\Lambda}(\mu,p)$ is connected in a specific sense:

\begin{lemma}{\cite[Corollary 3.6]{GTW-Lambda}} \label{l:connectedness}
    Let $\Lambda : \R^{n} \to GL(n,\R)$. Suppose $\cF = \cup_{i=1}^{\infty} \cF_{i}$, each $\cF_{i}$ is a $d$-cone with compact basis and there exists a $d$-cone $\cM$ with closed basis so that $\cF \subset \cM$ and for each $i$ the following holds
    \begin{equation} \label{e:pi} \tag{$P{_i}$}
        \begin{cases}
            \exists\, \eps_{i} > 0, \, R_{i} > 0 \text{ such that } \forall \, \eps \in (0, \eps_{i}) \text{ there exists no } \nu \in \cM \setminus \cup_{j=1}^{i-1} \cF_{j} \\
            \text{ satisfying } d_{r}(\nu, \cF_{i}) \le \eps ~ \forall \, r \ge R_i > 0 ~ \text{ and } d_{R_i}(\nu, \cF_{i}) = \eps. 
        \end{cases}
    \end{equation}
    Then for any $p \in \R^{n}$ so that $\Tan_{\Lambda}(\mu,a) \subset \cM$, the following dichotomy holds:
    \begin{enumerate}
        \item[(i)] $\Tan_{\Lambda}(\mu,p) \subset \cF$, or 
        \item[(ii)] $\Tan_{\Lambda}(\mu,p) \cap \cF = \emptyset$. 
    \end{enumerate}
\end{lemma}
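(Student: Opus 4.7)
The plan is to reduce the statement to the classical connectedness lemma for ordinary tangent measures via the Isomorphism Lemma \ref{l:iso}, and then adapt Preiss's original argument from \cite{Preiss}. By Lemma \ref{l:iso}(3), setting $\tilde\mu := (\Lambda(p)^{-1})_\sharp\mu$ and $\tilde p := \Lambda(p)^{-1}p$, we have the identification $\Tan_\Lambda(\mu,p) = \Tan(\tilde\mu,\tilde p)$. Since the hypothesis $\Tan_\Lambda(\mu,p) \subset \cM$ and the two alternative conclusions depend only on the set of tangents, we may without loss of generality assume $\Lambda\equiv \id$ and prove the dichotomy for ordinary tangent measures of $\tilde\mu$ at $\tilde p$.

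Argue by contradiction: suppose neither (i) nor (ii) holds, and choose $\nu_1 \in \Tan(\tilde\mu,\tilde p) \cap \cF$ and $\nu_2 \in \Tan(\tilde\mu,\tilde p) \setminus \cF$. Let $i_0$ be the smallest index with $\nu_1 \in \cF_{i_0}$, so $\nu_1 \in \cF_{i_0} \setminus \bigcup_{j<i_0}\cF_j$, and note that $\nu_2 \in \cM \setminus \bigcup_{j<i_0}\cF_j$ since $\nu_2 \notin \cF$. Set $\delta := d_1(\nu_2,\cF_{i_0})$; this is strictly positive because the basis of $\cF_{i_0}$ is compact and $\nu_2 \notin \cF_{i_0}$. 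Fix $\eps \in (0, \min(\delta,\eps_{i_0}))$. The strategy is to construct $\nu^* \in \Tan(\tilde\mu,\tilde p) \subset \cM$ with $\nu^* \notin \bigcup_{j<i_0}\cF_j$, satisfying $d_r(\nu^*,\cF_{i_0}) \le \eps$ for every $r \ge R_{i_0}$ and $d_{R_{i_0}}(\nu^*,\cF_{i_0}) = \eps$. Such a $\nu^*$ contradicts $(P_{i_0})$, completing the proof.

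To construct $\nu^*$, consider the orbit $\{T_{\tilde p, s}[\tilde\mu]\}_{s>0}$ of rescalings of $\tilde\mu$ about $\tilde p$. By Proposition \ref{p:kptrs}, $d_r$ scales covariantly under $T_{0,t}$ and is weak-$*$ continuous in its first argument wherever the latter has strictly positive $F_r$-mass. Along scales $r_k^{(1)}\downarrow 0$ extracting $\nu_1 \in \cF_{i_0}$, after suitable renormalization the rescalings are eventually within $d_r$-distance at most $\eps/2$ from $\cF_{i_0}$ for every fixed $r \ge R_{i_0}$. Along scales $r_k^{(2)}\downarrow 0$ extracting $\nu_2$, the distance at scale $R_{i_0}$ is eventually close to $\delta > \eps$. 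By the quasi-continuity of $s \mapsto d_{R_{i_0}}(c(s) T_{\tilde p,s}[\tilde\mu],\cF_{i_0})$ (with appropriately chosen renormalization $c(s)$), the value $\eps$ is crossed at infinitely many scales. Select scales $s_k \downarrow 0$ to be the \emph{largest} scales below a reference scale $S_k \gg s_k$ at which the crossing occurs \emph{from below}: at $s_k$ the distance at scale $R_{i_0}$ equals $\eps$, while at every scale $s \in [s_k, S_k]$ it remains at most $\eps$. Taking a weak-$*$ subsequential limit of the normalized rescalings at $s_k$, using the closed basis of $\cM$ to ensure the limit lies in $\cM$ and Proposition \ref{p:kptrs}(iii) to transfer the distance inequalities, produces $\nu^* \in \Tan(\tilde\mu,\tilde p)$ satisfying both distance conditions; provided $\eps$ is also chosen smaller than the minimum separation at scale $R_{i_0}$ between the compact bases of $\cF_{i_0}$ and the $\cF_j$ with $j<i_0$, this limit necessarily lies outside $\bigcup_{j<i_0}\cF_j$.

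The most delicate step is the scale selection in the construction of $\nu^*$. Naively picking scales where $d_{R_{i_0}}$ hits the value $\eps$ gives no a priori control at larger scales $r \ge R_{i_0}$, yet both conditions must hold in the limit. The device is to exploit the fact that the orbit returns arbitrarily close to $\cF_{i_0}$ at all large scales (since $\nu_1$ is itself a tangent), and then to identify $s_k$ as the largest scale at which the distance to $\cF_{i_0}$ at scale $R_{i_0}$ climbs up to $\eps$ as $s$ decreases from a reference scale $S_k$. Above $s_k$ the bound $d_r \le \eps$ then holds automatically by construction. Letting $S_k \to \infty$ and passing to the limit extends the bound to all $r \ge R_{i_0}$; the tightness of the rescalings required for weak-$*$ compactness is ensured by the normalization fixing $F_{R_{i_0}}$ and the closed-basis property of $\cM$.
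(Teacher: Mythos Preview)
The paper does not prove Lemma~\ref{l:connectedness} directly; it is cited from \cite{GTW-Lambda}. The appendix does prove a set-theoretic analogue (Theorem~\ref{t:setconnected}), but only for a \emph{single} cone $\cF$ rather than a countable union, and your outline follows that argument closely: reduce to ordinary tangents via the Isomorphism Lemma, then run Preiss's scale-selection argument by contradiction. That reduction is correct, and the overall shape of the argument is right. Two steps, however, are genuine gaps as written.

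First, your choice of $i_0$ as the smallest index with $\nu_1 \in \cF_{i_0}$ depends on the particular $\nu_1$ you fixed, and your proposed remedy---choosing $\eps$ below ``the minimum separation at scale $R_{i_0}$ between the compact bases of $\cF_{i_0}$ and the $\cF_j$ with $j<i_0$''---does not work: nothing in the hypotheses prevents the $\cF_j$ from intersecting or even nesting, so this separation can be zero. The correct choice is to let $i_0$ be the smallest index with $\Tan_\Lambda(\mu,p)\cap\cF_{i_0}\neq\emptyset$. Then every tangent, in particular $\nu^*$, is automatically outside $\bigcup_{j<i_0}\cF_j$, and no separation hypothesis is needed.

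Second, the assertion ``$S_k\gg s_k$'' (equivalently, that $S_k/s_k\to\infty$) is exactly what makes the bound $d_r\le\eps$ pass to all $r\ge R_{i_0}$ in the limit, and you have not justified it; your phrase ``letting $S_k\to\infty$'' is in fact incorrect since $S_k\downarrow 0$. In the appendix this is the claim $\tau_i\to 0$, and it requires its own short argument: if along a subsequence $s_k/S_k\to\tau\in(0,1]$, then the rescalings at $s_k$ converge to a dilate of $\nu_1\in\cF_{i_0}$, forcing $d_{R_{i_0}}(\nu^*,\cF_{i_0})=0$ rather than $\eps$. You should make this step explicit.
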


We recall Definition \ref{d:cda1a2}, where we defined $\cD(A^{+},A^{-})$ to be the class of Radon measures with the following properties:
\begin{enumerate}
    \item[a)] $\R^{n} \setminus \spt ~ \nu = \Omega^{+} \cup \Omega^{-}$ for unbounded complementary NTA domains $\Omega^{\pm}$ with fixed NTA constants.
    \item[b)] There exist non-negative $u^{\pm} > 0$ that are Green's functions with pole at infinity for the triple $(\Omega^{\pm} , \nu, L_{A^{\pm}})$.
\end{enumerate}

\section{Reduction to tangents} \label{s:blowup}

In this section we will adapt a blow-up argument, pioneered by Kenig and Toro \cite{KT03-Poisson,kenig2006free} to our setting. Throughout, we will work under the following assumption.

\begin{assumption}\label{a:main2}
    Let $n \ge 3$ and $\Lambda_0 > 0$. Let $\Omega^{+}$ and $\Omega^- = \R^n\setminus \overline{\Omega^+}$ be complementary $(M,R_{0})$-NTA domains with $\partial\Omega := \partial \Omega^+ = \partial \Omega^-$, let $A^{\pm} \in L^{\infty}(\R^{n} ; \R^{n \times n})$ and let $A^{\pm}$ be $2$-quasicontinuous, symmetric, and $\Lambda_{0}$-elliptic matrix-valued functions, and let $L_{A^\pm} = - \diverg(A^\pm\nabla \cdot)$ in $\Omega^\pm$. Let $x_0^\pm \in \Omega^\pm$ be fixed poles, and let $u^\pm := G^\pm(\cdot, x_0^\pm)$ and $\omega^\pm := \omega^{\pm, x_0^\pm}$ be the respective Green functions and elliptic measures for $L_{A^\pm}$ in $\Omega^\pm$ with poles at $x_0^\pm$.
\end{assumption}

Let $\Lambda_{0}, \Omega^{\pm}, u^{\pm}, A^{\pm}$, and $\Omega^{\pm}$ be as in Assumption \ref{a:main2}. Given a sequence of positive numbers $\{r_i\}$ with $r_i \downarrow 0$ and $p \in \partial \Omega$, define
\begin{equation} \label{e:domainseq}
    \Omega_{i}^{\pm} = \frac{\Omega^\pm - p}{r_{i}}\,, \qquad \partial\Omega_i = \frac{\partial\Omega - p}{r_{i}} 
    \end{equation}
together with the functions
\begin{equation} \label{e:functionseq}
    u^{\pm}_{i}(x) = \frac{u^{\pm}( r_{i} x + p)}{\omega^{\pm}(B(p,r_{i}))} r_{i}^{n-2}
\end{equation}
and  the measures
\begin{equation} \label{e:measureseq}
    \omega^{\pm}_{i}(E) = \frac{\omega^{\pm}(r_{i} E + p)}{\omega^{\pm}(B(p,r_{i}))} \quad \text{for Borel subsets $E\subset \R^{n}$}.
\end{equation}
Note that \eqref{e:measureseq} may be rewritten as $\omega^{\pm}_{i} = c_{i}T_{p,r_{i}}[\omega^{\pm}]$ where $c_{i} = \omega^{\pm}(B(p,r_{i}))^{-1}$.

Before beginning the proof in earnest, we take note of a few properties of the defined sequences. By Lemma \ref{l:cfms}, for $i$ large enough (so that $r_{i} \le R_{0}/C_{0}$ and our unnamed pole is outside $B(p,4r_{i})$) it follows that there exists a $C >0$ depending on the NTA constants, dimension, and ellipticity (but not on $i$) so that
\begin{equation} \label{e:fsb1}
C^{-1} \le \frac{u^{\pm}(x^{\pm}_{p,r_{i}})}{\omega^{\pm}(B(p,r_{i}))} r_{i}^{n-2} \le C\,
\end{equation}
where $x^{\pm}_{p,r_{i}} \in \Omega^\pm$ are interior corkscrew points for $\Omega^\pm$ associated to the surface balls $\Delta(p,r_i)$. 
In addition, the boundary harnack property for NTA domains, Lemma \ref{l:boundaryharnack} yields that for fixed $N > 1$ and all $x \in B(0,N)$, whenever $i$ is large enough depending on $N$,
\begin{equation} \label{e:fsb2}
u^{\pm}(r_{i} x + p) \le C u^{\pm}(x^{\pm}_{p,r_{i}}).
\end{equation}

In particular, combining \eqref{e:functionseq}, \eqref{e:fsb1}, and \eqref{e:fsb2} we conclude that for all $x \in B(0,N)$:
\begin{equation} \label{e:fseqlb}
\sup_{i \ge 1} \sup_{x \in B(0,N)} u^{\pm}_{i}(x)\le C < \infty.
\end{equation}

Furthermore, since $\omega^{\pm}$ are locally doubling (Lemma \ref{l:doubling}), it follows from \eqref{e:measureseq} that
\begin{equation} \label{e:mseqlb}
    \sup_{i \ge 1} ~ \omega^{\pm}_{i}(B(0,N)) = \sup_{i \ge 1} \frac{\omega^{\pm}(B(p,r_{i}N))}{\omega^{\pm}(B(p,r_{i}))} \le C_{N} <  \infty
\end{equation}
where $C_{N}>0$ depends on $N$ and allowable constants. 

Finally, to state the main theorem of this section, we decompose the boundary $\partial\Omega$ of $\partial \Omega^{\pm}$. 

\begin{equation} \label{e:gamma1}
    F_1 = \left \{ p \in \partial \Omega : 0 < h(p) := \frac{d \omega^{-}}{d \omega^{+}}(p) = \lim_{r \to 0} \frac{\omega^{-}(B(p,r))}{\omega^{+}(B(p,r))} < \infty \right\},
\end{equation}

\begin{equation} \label{e:gamma2}
    F_2 = \left\{ p \in \partial\Omega: \frac{d \omega^{-}}{d \omega^{+}}(p) = \infty \right\} 
\end{equation}
\begin{equation} \label{e:gamma3}
    F_3 = \left \{ p \in \partial\Omega : \frac{d \omega^{-}}{d \omega^{+}}(p) = 0 \right\}
\end{equation}
\begin{equation} \label{e:gamma4}
    F_4 = \left \{ p \in \partial\Omega : \frac{d \omega^{-}}{d \omega^{+}}(p) \text{ does not exist } \right\}.
\end{equation}

Given a Radon measure $\mu$ and a $\mu$-measurable set $E$, we denote by $\Theta(\mu, p, E)$ the density
\[
    \theta(\mu, p, E):= \lim_{r\downarrow 0} \frac{\mu(B(p,r)\cap E)}{\mu(B(p,r))}\,,
\]
whenever such a limit exists. In addition, given a $\mu$-measurable function $f$, we say that $x$ is a point of $\mu$-approximate continuity for $f$ if
$$
f(x) = \lim_{r \to 0} \aveint{B(x,r)}{} f d \mu \quad \text{and} \quad \lim_{r \to 0} \aveint{B(x,r)}{} |f(x) - f(y)| d\mu(y) = 0.
$$
We denote the set of $\mu$-approximate continuity points for $f$ by $\Ccal(\mu,f)$, and define 
\begin{align} \label{e:gamma0}
    F_0 & = \bigg\{ p \in F_1 \cap \Ccal\Big(\omega^+,\frac{d \omega^{-}}{d \omega^{+}}\Big) \cap \Ccal(\omega^\pm, A^\pm) : \theta(\omega^\pm,p,F_1) \text{ exists and equals 1} \bigg\}\,.
\end{align}

\begin{remark}\label{r:decomp-Leb-diff}
By the Lebesgue-Besicovitch-Federer differentiation theorem, we know that:
\begin{itemize}
    \item $\partial\Omega = F_1 \sqcup F_2 \sqcup F_3 \sqcup F_4$;
    \item $\omega^{+}(F_2)= 0 = \omega^{-}(F_3) = \omega^{\pm}(F_4)$;
    \item $\omega^{+} \mres F_1$ and $\omega^{-} \mres F_1$ are mutually absolutely continuous;
    \item $\frac{d \omega^{-}}{d \omega^{+}} \in L^{1}_{\loc}(\omega^{+})$ and $\frac{d \omega^{+}}{d \omega^{-}} = 1/ \left( \frac{d \omega^{-}}{d \omega^{+}} \right) \in L^{1}_{\loc}(\omega^{-})$;
    \item $\omega^{\pm}(F_1 \setminus F_0) = 0$.
\end{itemize}
Note that the last property additionally uses the fact that $A^{\pm}$ are $2$-quasicontinuous and \cite[Theorem 11.14]{HKT}.
\end{remark}

We introduce a final piece of notation which is subtle, but crucial to succinctly talk about the results of this section. We recall that we often have two matrix-valued functions $A^{\pm}$, and the two corresponding operators 
$$
L_{A^{\pm}} u = -\divr (A^{\pm}(\cdot) \nabla u(\cdot)) = - \divr(A^{\pm} \nabla u)
$$
where $(\cdot)$ should be filled with the spatial variable. In contrast, letting
$$
    A(u,x) := 
        \begin{cases}
            A^{+}(x) & u > 0 \\ 
            A^{-}(x) & u \le 0,
        \end{cases}
$$
we define operator
\begin{equation}\label{e:L^A}
L^{A}u := - \divr (A(u, \cdot) \nabla u( \cdot)) = - \divr(A(u) \nabla u)\,.
\end{equation}

We are now ready to state the main result concerning blow-ups of our two-phase problem. {This will allow us to reduce to the case of a constant coefficient problem on unbounded NTA domains.}

\begin{theorem}\label{t:blowups}
Let $\Omega^{\pm}$, $\partial\Omega$, $A^{\pm}$, $L_{A^\pm}$, $u^\pm$ and $\omega^\pm$ be as in Assumption \ref{a:main2}. Using the notation above, for any $p \in F_0$, there exists a subsequence (which we do not relabel) so that as $i \to \infty$ the following converge in Hausdorff distance uniformly on compact sets:
\begin{equation} \label{e:domainconv}
    \Omega_{i}^{\pm} \rightarrow \Omega^{\pm}_{\infty} \quad \text{and} \quad \partial\Omega_{i} \rightarrow \partial\Omega_{\infty}\,,
\end{equation}
where $\Omega^{\pm}_{\infty}$ are unbounded NTA domains with the same NTA constants as $\Omega^{\pm}$ and satisfy $\partial \Omega^{+}_{\infty} = \partial \Omega^{-}_{\infty} = \partial\Omega_\infty$. Moreover, there exist $u^{\pm}_{\infty} \in C(\R^{n})$ (extended by zero) such that
\begin{equation} \label{e:funcconv}
u^{\pm}_{i} \rightarrow u^{\pm}_{\infty} \quad \text{{in $W^{1,2}_{\loc}(\R^{n})$ and} uniformly on compact sets}
\end{equation}
and $u := u^+_\infty - u^-_\infty$ is a weak $W^{1,2}_{\loc}(\R^{n})$-solution to 
\begin{equation} \label{e:globalpde}
L^{A_{p}} u = 0 \quad \text{ on } \, \R^{n}\,,
\end{equation}
where $A_p(u) := A^+(p) \mathbf{1}_{\{u>0\}} + A^-(p)\mathbf{1}_{\{u\leq 0\}}$\footnote{Note that $A^\pm(p)$ are well-defined since $p\in F_0$.} and $L^{A_p}$ is as in \eqref{e:L^A}. 

Furthermore, there exist Radon measures $\omega^{\pm}_{\infty}$ so that
\begin{equation} \label{e:measureconv}
    \omega^{\pm}_{i} \toweakstar \omega^{\pm}_{\infty}\,,
\end{equation}
and $\omega^{\pm}_{\infty}$ are the elliptic measures of $\Omega^{\pm}_{\infty}$ (with pole at infinity) corresponding to the operators $L_{A^{\pm}(p)}$. Moreover, $\omega^{+}_{\infty} = \omega^{-}_{\infty} =: \omega_{\infty}$. That is, for all $\varphi \in C^{\infty}_{c}(\R^{n})$,
\begin{equation} \label{e:gfident}
\int_{\Omega^{\pm}_{\infty}} A^\pm(p) \nabla u^{\pm}_{\infty} \cdot \nabla \varphi = \int_{\partial\Omega_\infty} \varphi d \omega_{\infty}\,,
\end{equation}
{and
    \begin{equation} \label{e:onesided}
    \begin{cases}
        L_{A^{\pm}(p)} u^{\pm}_{\infty} = 0 & \text{ on } \Omega^{\pm}_{\infty} \\
        u^{\pm}_{\infty} = 0 & \text{ on } \R^{n} \setminus \Omega^{\pm}_{\infty} \\
        u^{\pm}_{\infty} > 0 & \text{ on }  \Omega^{\pm}_{\infty}\,. 
    \end{cases}
    \end{equation}}
{Finally, we note that there exists $C>0$ depending on the NTA constants so that
\begin{equation} \label{e:normalization}
    0 \in \spt \omega_{\infty} \quad \text{and} \quad C^{-1} \le  \omega_{\infty}(B_{1}) \le 1.
\end{equation}
}
\end{theorem}

Notice that Theorem \ref{t:blowups} is a more detailed re-statement of Theorem \ref{t:maintangents}; in other words, we are concluding that $\wslim \omega_{i}^{\pm} = \omega^{\infty} \in \cD(A^+(p), A^{-}(p))$ and \eqref{e:gfident} holds.

{
\begin{remark} \label{r:poleatinf}
For each choice of sign $\pm$, we refer to the members of the triple $(\Omega^{\pm}, u_{\infty}^{\pm},  \omega_{\infty})$ in the conclusion of Theorem \ref{t:blowups} as follows. The functions $u_{\infty}^{\pm}$ are the Green's functions on $\Omega^{\pm}_{\infty}$ with poles at infinity, and $\omega_{\infty}$ is the corresponding harmonic measure with pole at infinity.

It is a subtle distinction that we only mention the Green's function/harmonic measure with pole at infinity in our blown-up setting; this is because Theorem \ref{t:blowups} proves the existence of such objects after a blow-up. For the case of the Laplacian, it is known that these objects exist for unbounded NTA domains \cite[Corollary 3.2]{kenig1999free}, but while we expect it to be true, we could not find and do not require an analogous existence theorem in the variable coefficient setting.
\end{remark}
}
\begin{proof}
    Thanks to pre-compactness following from \eqref{e:fseqlb} and \eqref{e:mseqlb}, the fact that there exists a subsequence and limiting objects so that \eqref{e:domainconv}, \eqref{e:measureconv}, {and the ``uniformly on compact sets" part of} \eqref{e:funcconv}, all hold follows as in \cite[Section 4]{KT03-Poisson}. {The $W^{1,2}_{\loc}$ part of \eqref{e:funcconv} follows as in \cite[Lemma 3.11]{AzzMou}}. The fact that $0 \in \spt \omega_{\infty}$ is a generic fact about tangent measures. That $\omega_{\infty}(B_{1}) \le 1$ follows from \eqref{e:measureseq} and the characterization of weak-$*$ convergence in terms of open sets. The fact that $C^{-1} \le \omega_{\infty}(B_{1})$ follows from the fact that the sequence $\omega_{i}$ is uniformly doubling (Lemma \ref{l:doubling}) with $\omega_{i}(B_{1}) = 1$ implying $\omega_{i}(\overline{B_{3/4}}) \ge C^{-1}$ for all $i$, and then using the characterization of weak-$*$ convergence in terms of compact sets.
    
    Moreover, we claim that (cf. \cite[Lemma 4.11 (f) \& (4-11)]{AzzMou})
    \begin{equation} \label{e:bu2}
        \int_{\Omega^{\pm}_{\infty}} A^\pm(p) \nabla u^{\pm}_{\infty} \cdot \nabla \varphi = \int_{\partial\Omega_\infty} \varphi d \omega^{\pm}_{\infty} \qquad \forall \ \vphi\in C_c^\infty(\R^n)\,,
    \end{equation}
    {and \eqref{e:onesided} holds.} {More specifically, by definition of a point of $\omega^\pm$-approximate continuity of $A^\pm$ at $p$, we have
    \begin{equation}\label{e:appr-cty-omega}
         \lim_{r \to 0} \frac{1}{\omega^\pm(B(p,r_i))}\int_{B(p,r)} |A^\pm(p) - A^\pm(y)| d\omega^\pm(y) = 0\,.
    \end{equation}
    Combining this with the local weak-$*$ convergence of $\omega_i^\pm$ to $\omega^\pm_\infty$, for a given $\vphi\in C_c^\infty(\R^n)$, letting $\vphi_i:= \vphi(\tfrac{\cdot - p}{r_i})$ for $i$ large enough we have
    \begin{align*}
       \int_{\partial\Omega_\infty} \varphi d \omega^{\pm}_{\infty} &= \int \vphi d\omega^\pm_i \\
       &= \frac{1}{\omega^\pm(B(p,r_i))} \int_{\partial\Omega^\pm} \vphi_i d\omega^\pm \\
       &= \frac{r_i^n}{\omega^\pm(B(p,r_i))} \int_{\Omega_i^\pm} A^\pm(p + r_i y) \nabla u^\pm(p + r_i y) \cdot \nabla \vphi (y) \\
       &= \frac{r_i^n}{\omega^\pm(B(p,r_i))} \int_{\Omega_i^\pm} A^\pm(p) \nabla u^\pm(p + r_i y) \cdot \nabla \vphi (y) \\
       &\qquad+ \frac{r_i^n}{\omega^\pm(B(p,r_i))} \int_{\Omega_i^\pm} (A^\pm(p + r_i y) - A^\pm(p))\nabla u^\pm(p + r_i y) \cdot \nabla \vphi (y) \\
        &= \int_{\Omega_\infty^\pm} A^\pm(p) \nabla u_\infty^\pm \cdot \nabla \vphi + \int_{\Omega_i^\pm \Delta \Omega_\infty^\pm} A^\pm(p) \nabla u_\infty^\pm \cdot \nabla \vphi \\
       &\qquad+ \int_{\Omega_i^\pm} A^\pm(p) (\nabla u_i^\pm - \nabla u_\infty^\pm ) \cdot \nabla \vphi \\
       &\qquad+ \int_{\Omega_i^\pm} (A^\pm(p + r_i y) - A^\pm(p))\nabla u_i^\pm(y) \cdot \nabla \vphi (y)\,.
    \end{align*}
    A combination of \eqref{e:domainconv}, \eqref{e:funcconv} and \eqref{e:appr-cty-omega} ensures that all but the first term in the final equality converge to zero (up to extracting a subsequence), as desired. Here, we have used the definition of the Greens functions together with the fact that, since $\vphi$ has fixed compact support, for $i$ large enough the implicit fixed poles $x_0^\pm$ lie outside of the support of $\omega^\pm_i\mres \supp(\vphi)$.
    }
    
    
    While similar statements are known in the literature, see in particular \cite{KPT,KT03-Poisson, kenig2006free, AzzMou}, we prove that under the assumption that $p \in F_0$ (as defined in \eqref{e:gamma0}) both \eqref{e:globalpde} and \eqref{e:gfident} hold for our pair of operators with a discontinuity in the coefficients across the boundary. In light of \eqref{e:bu2} and \eqref{e:onesided}, it suffices to prove that $\omega^{+}_{\infty} = \omega^{-}_{\infty}$.

    To confirm $\omega^{+}_{\infty} = \omega^{-}_{\infty}$ we first observe that by duality, our rescaled measures \eqref{e:measureseq} satisfy
    \begin{equation} \label{e:bu1}
        \int_{\partial\Omega_{i}} \varphi ~ d \omega^{\pm}_{i} = \frac{1}{\omega^{\pm}(B(p,r_{i}))} \int_{\partial\Omega} \varphi \left( \frac{q-p}{r_{i}} \right) d \omega^{\pm}(q) \qquad \forall \vphi \in C_c^\infty(\R^n)\,.
    \end{equation}
    In particular, recalling the definition of $F_1$ from \eqref{e:gamma1},
    \begin{align*}
        \int_{\partial\Omega_i} \varphi \, d \omega^{-}_{i} & = \frac{1}{\omega^{-}(B(p,r_{i}))} \int_{F_1} \varphi \left( \frac{q-p}{r_{i}} \right) \frac{d \omega^{-}}{d \omega^{+}}(q) d \omega^{+}(q) \\
        \nonumber        & + \frac{1}{\omega^{-}(B(p,r_{i}))} \int_{\partial\Omega \setminus F_1} \varphi \left( \frac{q-p}{r_{i}} \right) d \omega^{-}(q) \\
    &=: (I) + (II).
    \end{align*}
    We note that as $i \to \infty$ the size of $(II)$ goes to zero. Indeed, combine the doubling of $\omega^{-}$ \eqref{e:mseqlb} with the compact support of $\varphi$ and the fact that $p \in F_0$ requires $\Theta(\omega^-,p,\partial \Omega \setminus F_1) = 0$. 
    Denoting $h = \frac{d \omega^{-}}{d \omega^{+}}$, we next write
    \begin{align*}
        (I) & = \frac{\omega^{+}(B(p,r_{i}))}{\omega^{-}(B(p,r_{i}))} \frac{1}{\omega^{+}(B(p,r_{i}))} \int_{F_1} \varphi \left( \frac{q-p}{r_{i}} \right) h(q) d \omega^{+}(q) \\
        & = h(p) \frac{\omega^{+}(B(p,r_{i}))}{\omega^{-}(B(p,r_{i}))} \frac{1}{\omega^{+}(B(p,r_{i}))} \int_{F_1} \varphi \left( \frac{q-p}{r_{i}} \right)  d \omega^{+}(q) \\
        & + \frac{\omega^{+}(B(p,r_{i}))}{\omega^{-}(B(p,r_{i}))} \frac{1}{\omega^{+}(B(p,r_{i}))} \int_{F_1} \varphi \left( \frac{q-p}{r_{i}} \right) (h(q)-h(p)) d \omega^{+}(q) \\
        & =: (III) + (IV).
    \end{align*}
     Since $p$ is a point of approximate continuity for $h$, \eqref{e:mseqlb} ensures that $(IV)$ goes to zero as $i \to \infty$. Finally, we see that
     $$
        \lim_{r \to 0} \frac{\omega^{+}(B(p,r_{i}))}{\omega^{-}(B(p,r_{i}))} = \frac{1}{h(p)}
     $$
     so that, when combined with \eqref{e:measureconv} and \eqref{e:bu1}, $(III)$ converges to 
    $\int_{\partial \Omega^{+}_{\infty}} \varphi d \omega^{+}_{\infty}$. In summary, we have shown
    $$
        \int_{\partial \Omega^{-}_{\infty}} \varphi d \omega^{-}_{\infty} = \lim_{i \to \infty} \, (II) + (III) + (IV) = \lim_{i \to \infty} \, (III) = \int_{\partial \Omega^{+}_{\infty}} \varphi d \omega^{+}_{\infty} \qquad \forall \varphi \in C_{c}^{\infty}(\R^{n})
    $$
    verifying that $\omega^{+}_{\infty} = \omega^{-}_{\infty} = \omega_{\infty}$ and consequently \eqref{e:gfident}. 
    
    We now turn to \eqref{e:globalpde}. We deduce from Theorem \ref{t:gfp} that $u \in W^{1,2}_{\loc}(\R^{n})$ since it is a difference of two Green functions with pole at infinity. Thus, from \eqref{e:onesided}, for $\varphi \in C_{c}^{\infty}(\R^{n})$:
    \begin{align*}
        \int_{\R^{n}}  \langle A_p(u) \nabla u, \nabla \varphi \rangle & = \int_{\Omega^{+}_\infty} \langle A^+(p)\nabla u, \nabla \varphi \rangle - \int_{\Omega^{-}_\infty} \langle {A}^{-}(p) \nabla u, \nabla \varphi \rangle \\
        & = \int_{\partial\Omega_\infty} \varphi d \omega^{+}_\infty - \int_{\partial\Omega_\infty} \varphi d \omega^{-}_\infty = 0
    \end{align*}
    verifying \eqref{e:globalpde}.
\end{proof}

{We record here a more general form of the first part of Theorem \ref{t:blowups}, which gives compactness for general varying sequences of domains with associated elliptic measures for a fixed operator, rather than those coming from rescalings of a single domain. This will come in useful later, when we implement Preiss' techniques in order to conclude the proof of Theorem \ref{t:maintangents} (see Proposition \ref{p:compactbasis}). Therein, we will additionally justify compactness for the Greens functions. We omit the proof, since it follows by the very same reasoning as that in the beginning of Theorem \ref{t:blowups}. More details can be found in \cite[Proposition 1.7]{MMP-layer-ptls}, which in the even more general setting of a CDC domain and varying operators.

\begin{theorem}\label{t:diagonal-cptness}
	Let $\{\Omega^{\pm}_i\}_i$ be a sequence of complementary $(M,R_0)$-NTA domains with boundaries $\partial\Omega_i$, let $A^{\pm}$ and $L_{A^\pm}$ be as in Assumption \ref{a:main2}, and let $u^\pm_i$ and $\omega^\pm$ also be as in Assumption \ref{a:main2} for $\Omega^\pm_i$. There exists a subsequence (which we do not relabel) so that as $i \to \infty$,
	\begin{equation*}
		\Omega_{i}^{\pm} \rightarrow \Omega^{\pm}_{\infty} \quad \text{and} \quad \partial\Omega_{i} \rightarrow \partial\Omega_{\infty}\,,
	\end{equation*}
	where the convergence is locally uniformly in Hausdorff distance, and $\Omega^{\pm}_{\infty}$ are unbounded $(M,R_0)$-NTA domains and satisfy $\partial \Omega^{+}_{\infty} = \partial \Omega^{-}_{\infty} = \partial\Omega_\infty$. Moreover, there exists $C>0$ depending on the NTA constants so that
		\begin{equation*}
			0 \in \spt \omega_{\infty} \quad \text{and} \quad C^{-1} \le  \omega_{\infty}(B_{1}) \le 1.
		\end{equation*}
\end{theorem}
}

\section{Free boundary interpretation of the blow-up domains} \label{s:freeboundary}
We are now in the position to study the free boundary problem \eqref{e:globalpde}.
We begin by using (\cite[Lemma 3.8, 3.9]{AzzMou}), which we state in a simplified version that is sufficient for our needs:

\begin{lemma}[{\cite[Lemma 3.8, Lemma 3.9]{AzzMou}}] \label{l:amcov} 
    Let $\Omega$, $A$, $L_A$ and $\{\omega^x\}_{x\in \Omega}$ be as in Assumption \ref{a:main1}. Let $p\in \partial\Omega$ and let $\Lambda(p)\in \R^{n\times n}$ be a $\Lambda_0$-elliptic matrix.
     Let $u= G(\cdot, x)$ be a Green function for $L_A$ on $\Omega$ with pole at $x\in\Omega$, and let $T(x) = \Lambda(p)^{-1}(x-p)$. Then
    $u \circ T^{-1}, T[\omega^x]$, are respectively a Green function and elliptic measure with pole at $T(x)$ for $L_{\tilde{A}}$ on $T(\Omega)$, where $\tilde{A}(y) = \Lambda(p)^{-1} A(\Lambda(p)y + p) \Lambda(p)^{-1} $.
\end{lemma}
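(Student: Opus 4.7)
The plan is to verify the claim via a direct change-of-variables computation, treating the elliptic measure and Green function separately. First I would check that all of the objects on the right-hand side of the conclusion are well-defined. The symmetry of $\tilde A$ is immediate from $A=A^t$ together with the symmetry of $\Lambda(p)$. Ellipticity of $\tilde A$ with constant depending only on $\Lambda_0$ and the eigenvalues of $\Lambda(p)$ follows from
\[
    \langle \xi, \tilde A(y)\xi\rangle = \langle \Lambda(p)^{-1}\xi,\, A(\Lambda(p)y+p)\Lambda(p)^{-1}\xi\rangle \qquad \forall \xi \in \R^n\,.
\]
Since $T$ is a bi-Lipschitz affine map, $T(\Omega)$ is NTA with NTA constants depending only on those of $\Omega$ and on $\Lambda(p)$, so by the discussion of Section \ref{s:emb} both the elliptic measure and the Green function associated to $L_{\tilde A}$ on $T(\Omega)$ exist and are characterized uniquely.

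For the elliptic measure claim, I would proceed via the Dirichlet characterization. Given $f \in C(\partial T(\Omega))$, let $\tilde U \in W^{1,2}_{\loc}(T(\Omega))\cap C(\overline{T(\Omega)})$ solve $L_{\tilde A}\tilde U = 0$ in $T(\Omega)$ with $\tilde U|_{\partial T(\Omega)} = f$. Setting $U := \tilde U \circ T$ and $\phi := \psi \circ T$ for test functions $\psi \in C_c^\infty(T(\Omega))$, the chain rule gives $\nabla(U)(y) = \Lambda(p)^T \nabla \tilde U(T(y))$ and $\nabla \psi(T(y)) = \Lambda(p)^T \nabla \phi(y)$ (using $\Lambda(p)$ symmetric). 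Applying the identity $\tilde A(T(y)) = \Lambda(p)^{-1} A(y)\Lambda(p)^{-1}$ and changing variables $z = T(y)$ with Jacobian $|\det \Lambda(p)^{-1}|$ then yields
\[
    \int_{T(\Omega)} \langle \tilde A\nabla \tilde U, \nabla \psi\rangle\, dz \;=\; |\det \Lambda(p)^{-1}|\int_\Omega \langle A \nabla U, \nabla \phi\rangle\, dy\,,
\]
so $L_A U = 0$ weakly in $\Omega$, with continuous boundary values $U|_{\partial\Omega} = f\circ T$. Representing $U(x) = \tilde U(T(x))$ in two ways using the elliptic measures for $L_A$ on $\Omega$ and $L_{\tilde A}$ on $T(\Omega)$, together with the definition $\int \psi\, d(T[\omega^x]) = \int \psi\circ T\, d\omega^x$ of the pushforward, identifies $T[\omega^x]$ as the elliptic measure for $L_{\tilde A}$ on $T(\Omega)$ with pole at $T(x)$.

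For the Green function claim, I would apply the same change of variables to the weak formulation from Theorem \ref{t:gfp}(2). Setting $v := u\circ T^{-1}$ and running the computation above with $\phi \in C_c^\infty(\Omega)$ and $\psi := \phi \circ T^{-1}$ gives
\[
    \int_{T(\Omega)} \langle \tilde A\nabla v, \nabla \psi\rangle\, dz \;=\; |\det \Lambda(p)^{-1}|\int_\Omega \langle A \nabla u, \nabla \phi\rangle\, dy \;=\; |\det \Lambda(p)^{-1}|\,\psi(T(x))\,,
\]
and the integrability bounds from Theorem \ref{t:gfp}(3)--(4) transfer to $v$ up to constants depending on $\Lambda(p)$, so the identity is valid in the weak sense required. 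Thus $v = u\circ T^{-1}$ satisfies the defining PDE for the Green function of $L_{\tilde A}$ on $T(\Omega)$ at pole $T(x)$, up to the overall constant $|\det \Lambda(p)^{-1}|$ which may be absorbed into the normalization (and which disappears from the combined CFMS-type relation Lemma \ref{l:cfms} when linking Green function and elliptic measure, consistent with the calculation of the previous paragraph).

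There is no substantial obstacle to this argument: the proof is a transparent chain rule and change of variables, and the only subtlety is careful bookkeeping of the factor $|\det \Lambda(p)^{-1}|$. That factor is harmless since the conclusion involves identifying a Green function and an elliptic measure up to the standard normalization of Theorem \ref{t:gfp}, and is consistent across the Green function and elliptic measure identifications via the CFMS estimate.
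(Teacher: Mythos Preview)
Your approach is correct and is the natural change-of-variables argument; the paper itself does not give a proof of this lemma but simply cites it from \cite{AzzMou}, so there is no alternative route to compare against. One small slip: for $U=\tilde U\circ T$ with $T(y)=\Lambda(p)^{-1}(y-p)$ the chain rule gives $\nabla U(y)=\Lambda(p)^{-1}\nabla\tilde U(T(y))$, not $\Lambda(p)^{T}\nabla\tilde U(T(y))$; nevertheless your displayed identity is correct once one also uses $\nabla\tilde U(T(y))=\Lambda(p)\nabla U(y)$ and $\nabla\psi(T(y))=\Lambda(p)\nabla\phi(y)$ together with the symmetry of $\Lambda(p)$. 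Your treatment of the $|\det\Lambda(p)^{-1}|$ factor is exactly in line with the paper's remark following the lemma, which notes that this constant can be absorbed since scaling $\tilde A$ by a positive constant does not change the operator $L_{\tilde A}$ up to a harmless scalar.
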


We note that in the more general setting of \cite{AzzMou}, the above lemma contains a factor of $|\det(\Lambda(p))|$ in the definition of $\tilde{A}$. However, since this is a constant in our setting, it can be removed for aesthetic purposes. As an immediate corollary, we obtain the following.
\begin{corollary}\label{c:Lambda-tangent-FB}
    Suppose that $\mu$ is a Radon measure on $\R^n$, and let $\Omega$, $A$ be as in Assumption \ref{a:main1}. Then for every $p\in \partial\Omega$, the following holds. If $\Tan(\mu,p) \subset \cD(A^{+}(p), A^{-}(p))$, $\Lambda(p) = \sqrt{A^{+}(p)}$ and $M_p = \Lambda(p)^{-1} A^{-}(p) \Lambda(p)^{-1}$ then $\Tan_{\Lambda}(\mu,p) \subset \cD(\id, M_{p})$.
\end{corollary}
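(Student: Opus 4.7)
\textbf{Proof proposal for Corollary~\ref{c:Lambda-tangent-FB}.} The plan is to combine the Isomorphism Lemma~\ref{l:iso} with a direct change of variables via Lemma~\ref{l:amcov} adapted to Green's functions with pole at infinity. Fix $p\in \partial\Omega$, and take any $\nu\in\Tan_\Lambda(\mu,p)$. By Lemma~\ref{l:iso}(1)$\Leftrightarrow$(2), one has $\sigma:=\Lambda(p)_\sharp \nu \in \Tan(\mu,p)\subset \cD(A^+(p),A^-(p))$, so Definition~\ref{d:cda1a2} gives $0\in\spt\sigma$, a decomposition $\R^n\setminus\spt\sigma = \Omega_\sigma^+\cup\Omega_\sigma^-$ into complementary unbounded NTA domains with fixed constants, and non-negative Green's functions with pole at infinity $u_\sigma^\pm$ for the triples $(\Omega_\sigma^\pm,\sigma,L_{A^\pm(p)})$.

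Next I would verify the three bullet points of Definition~\ref{d:cda1a2} for $\nu$ with matrices $(\id, M_p)$ by transporting the data above through the linear map $T(x)=\Lambda(p)^{-1}x$, i.e.\ setting
\[
 \Omega_\nu^\pm := T(\Omega_\sigma^\pm)=\Lambda(p)^{-1}\Omega_\sigma^\pm, \qquad u_\nu^\pm := u_\sigma^\pm \circ T^{-1} = u_\sigma^\pm(\Lambda(p)\,\cdot)\,,
\]
after absorbing the harmless constant $\det\Lambda(p)$ (cf.\ the remark following Lemma~\ref{l:amcov}). Since $T$ is a bi-Lipschitz linear isomorphism, the NTA property of $\Omega_\sigma^\pm$ with the fixed constants translates to NTA constants for $\Omega_\nu^\pm$ depending only on those fixed constants and on $\Lambda_0$, so $(\Omega_\nu^+,\Omega_\nu^-)$ is again a pair of complementary unbounded NTA domains with fixed constants, and $T[\sigma]=\nu$ has $0\in \spt\nu$ because $\spt\nu = T(\spt\sigma)\ni T(0)=0$.

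For the PDE identity, given $\psi\in C_c^\infty(\R^n)$, I would set $\varphi(x):=\psi(T(x))=\psi(\Lambda(p)^{-1}x)$, apply the defining identity $\int\varphi\, d\sigma = \int \nabla u_\sigma^\pm \cdot A^\pm(p)\nabla\varphi$ for $\sigma$, push the measure forward to recognize the left-hand side as $\int\psi\, d\nu$, and on the right-hand side change variables $y=T(x)$. Using the symmetry of $\Lambda(p)$ and the relation $\nabla u_\sigma^\pm(\Lambda(p)y)=\Lambda(p)^{-1}\nabla u_\nu^\pm(y)$, the matrix factor collapses to $\Lambda(p)^{-1}A^\pm(p)\Lambda(p)^{-1}$, which equals $\id$ on the $+$ side (by definition $\Lambda(p)=\sqrt{A^+(p)}$) and $M_p$ on the $-$ side; this is exactly the identity required by Definition~\ref{d:cda1a2}(2) for $\nu\in\cD(\id, M_p)$.

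I do not anticipate any serious obstacle: the argument is essentially a book-keeping verification that Lemma~\ref{l:amcov} (stated for elliptic measures and finite poles) goes through for the Green's function with pole at infinity appearing in Definition~\ref{d:cda1a2}, which requires only that the NTA constants are preserved by the linear map $T$ and that the determinant factor can be absorbed into the Green's function. The only mildly delicate point is checking that the transformed $u_\nu^\pm$ are still non-negative and supported on $\overline{\Omega_\nu^\pm}$, but this follows immediately from the corresponding properties of $u_\sigma^\pm$ together with the fact that $T$ is a homeomorphism of $\R^n$ mapping $\Omega_\sigma^\pm$ onto $\Omega_\nu^\pm$.
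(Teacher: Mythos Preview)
Your proposal is correct and follows essentially the same approach as the paper, which simply states that the corollary follows directly from the equivalence of (1) and (2) in Lemma~\ref{l:iso} together with Lemma~\ref{l:amcov}. You have merely spelled out the change-of-variables computation underlying Lemma~\ref{l:amcov} in the pole-at-infinity setting, including the bookkeeping for the NTA constants and the determinant factor, which the paper leaves implicit.
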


The corollary follows directly from the equivalence of (1) and (2) in Lemma \ref{l:iso} and Lemma \ref{l:amcov}. Therefore, by working with $\Tan_{\Lambda}(\omega^{\pm},p)$ instead of $\Tan(\omega^{\pm},p)$, we reduce to working under the following assumption throughout this section:

\begin{assumption}\label{a:blownup}
    Let $n \ge 1$ and $M \in \R^{n\times n}$ be a $\Lambda_0$-elliptic matrix. Let $u \in W^{1,2}_{\loc}\cap C(\R^n)$ be a weak solution to
    \begin{equation}\label{e:B-sol}
        L^{M} u = - \diverg (M(u)\nabla u) = 0 \qquad \text{on $\R^n$}\,,
    \end{equation}
    where
    \[
        M(u) := \id \mathbf{1}_{\{u>0\}} + M \mathbf{1}_{\{u\leq 0\}}\,.
    \]
\end{assumption}

The main result of this section can be loosely summarized as follows: if $n\geq 3$, $\omega\in \Dcal(\Id,M)$, and $\omega$ is sufficiently close to flat at infinity, then in fact $\omega \in \cF$. This will follow by showing a Liouville-type theorem for the difference of Green's functions $u^\pm$ with poles at infinity for $(\Omega^\pm(u),\omega,L^M)$, which we will see form a viscosity solution of an appropriate two-phase free-boundary problem; see Corollary \ref{c:flat-fb} for a precise statement of the result.

\subsection{Weak solutions are viscosity solutions}\label{ss:viscosity-sol}

\begin{definition} \label{d:viscosity}
    We say that a continuous function $u$ is a viscosity sub-solution to our free boundary problem $L^{M}(\cdot) = 0$ if:
    \begin{enumerate}
        \item $\Delta u \ge 0$ in $\Omega^{+}(u) = \{ u > 0\}$ and $L_{M}u \ge 0$ in $\Omega^{-}(u) = \{ u < 0 \}$.
        \item If $B(y,\rho) \subset \Omega^{+}(u)$ and $x_{0} \in \partial B(y,\rho) \cap \partial \Omega^{+}(u)$, then for the inward pointing unit normal $\nu$ of $\partial B(y,\rho)$ at $x_{0}$,
        $$
            u(x) \ge \alpha (\nu \cdot(x-x_{0}))^{+} - \beta (\nu (x-x_{0}))^{-} + o(|x-x_0|)
        $$
        for some $\beta \ge 0$ and $\alpha \ge {\big(\sum_{i,j=1}^{n} \nu_{i} \nu_{j} M^{ij}\big) \beta}$ where $M^{ij}$ are the coefficients of $M$.
    \end{enumerate}

    A super-solution is defined similarly, with all the inequalities above reversed and the ball $B(y,\rho) \subset \Omega^{-}(u)$.

    A solution is any continuous function that is both a sub- and a super-solution.
\end{definition}    

\begin{remark}
    {We are considering the very same two-operator free-boundary problem and notion of solution as that in \cite{Feldman,AM} (see also \cite{Cerutti-Ferrari-Salsa,Ferrari-Salsa} for a more general variable coefficient version). Indeed, the crucial step herein is to verify that the difference of blown up Greens functions are solutions to this free-boundary problem and satisfy the $\eps$-monotonicity property required to kick-start the improvement of flatness mechanism. Thereafter, we are simply invoking the strategy presented in the aforementioned works, but we provide the key steps and relevant intermediate result statements herein for the purpose of clarity. Our exposition follows that in these references closely.}
\end{remark}
    
We will frequently use the notion of \emph{two-plane solutions}:
\begin{definition}
    We refer to a function $P(x) = \alpha(\nu\cdot x)^+ - \beta(\nu\cdot x)^-$ for some choice of $\nu\in \Sbb^{n-1}$, $\alpha,\beta\geq 0$ as a two-plane solution. Given $M$ as in Assumption \ref{a:blownup}, if $\alpha, \beta$ satisfy the property (2) of Definition \ref{d:viscosity} for this choice of $M$, we refer to $P$ as a two-plane sub-solution of $L^M (\cdot) = 0$. We analogously define two-plane super-solutions of $L^M (\cdot) =0$, and we refer to $P$ as a two-plane solution of $L^M (\cdot)=0$ if it is both a two-plane sub- and super-solution. {There will often be no relationship assumed between $\alpha$ and $\beta$. We will thus often simply talk about two-plane solutions without specifying any coefficients $\alpha,\beta$.}
\end{definition}

We begin with the following lemma.

{
\begin{lemma} \label{l:twoplane}
    Suppose that $M$ and $u$ are as in Assumption \ref{a:blownup}. If $B(y,\rho) \subset \Omega^{-}(u)$ or $B(y,\rho) \subset \Omega^{+}(u)$ with $x_{0} \in \partial B(y,\rho) \cap \{u=0\}$, then there exists a unique two-plane solution $P= P_{x_{0}}$ so that
    \begin{equation} \label{e:am36}
        u(x) - P(x-x_{0})= o(|x-x_{0}|).
    \end{equation}
    In particular, the rescalings
    $$
        u_{r}(\cdot) := \frac{u(r \cdot + x_{0})}{r}
    $$
    converge locally uniformly to $P$. Moreover, the following three properties hold:
    \begin{enumerate}
        \item For all $N > 0$,
    \begin{equation} \label{e:l1setconv}
        \frac{|(\Omega^{\pm}(u) \Delta \Omega^{\pm}(P)) \cap B(x_{0},rN)|}{|B(x_{0},rN)|} \xrightarrow{r \to 0} 0.
    \end{equation}
        \item $u_{r} \rightarrow{P}$ in $L^{2}_{\loc}(\R^{n})$ and $\nabla u_{r} \rightarrow \nabla P$ weakly in $L^{2}_{\loc}(\R^{n})$
        \item  $P$ satisfies $L^M P = 0$ in a weak sense.
    \end{enumerate}

\end{lemma}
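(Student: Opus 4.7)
The plan is a blow-up argument about $x_0$; I describe the case $B(y,\rho)\subset \Omega^-(u)$, the other case being strictly analogous with $\alpha$ and $\beta$ interchanged. First I would show that the rescalings $u_r(x) := u(rx + x_0)/r$ are uniformly bounded on compact sets and have uniformly bounded Dirichlet energy. The local $L^\infty$ bound comes from the touching ball condition: applying Hopf's lemma and boundary Harnack to the positive $L_M$-harmonic function $-u$ on $B(y,\rho)$, which vanishes at $x_0 \in \partial B(y,\rho)$, yields a two-sided linear estimate $c\,\dist(\cdot,\partial B) \le -u \le C\,\dist(\cdot,\partial B)$ near $x_0$ inside $B$. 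On the other side the linear growth and non-degeneracy are the Alt--Caffarelli-type estimates for the two-phase problem combined with interior Lipschitz regularity of $L^M$-solutions (cf.~\cite{AM}). Uniform energy bounds then follow by Caccioppoli. Arzel\`a--Ascoli and weak compactness in $W^{1,2}_\loc$ extract a subsequence $u_{r_k} \to P$ locally uniformly, with $\nabla u_{r_k} \toweak \nabla P$ in $L^2_\loc$.

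Next I would identify $P$. Since $M$ is constant, $L^M u_r = 0$ weakly on $\R^n$ for every $r>0$, so passing to the limit using the weak gradient convergence gives $L^M P = 0$ weakly on $\R^n$, establishing (3). Choose coordinates so that $x_0 = 0$ and $\nu := -e_n$ is the outward unit normal to $B$ at $x_0$ (so $y = \rho e_n$); then the rescaled balls $B(\rho e_n/r_k, \rho/r_k)$ Hausdorff-converge on compacts to the half-space $H^+ := \{x_n > 0\}$. Uniform convergence forces $P \le 0$ on $H^+$, $P|_{\partial H^+} = 0$, and $L_M P = 0$ in $H^+$. A one-phase Liouville argument---via odd reflection across $\partial H^+$ after the linear change of variable $x \mapsto M^{1/2}x$, combined with the linear growth bound from the previous paragraph---yields $P(x) = -\beta x_n$ on $H^+$ for some $\beta \ge 0$. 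Moreover $\beta$ coincides with the Hopf normal derivative of $-u$ at $x_0$ from inside $B$ and is therefore uniquely determined, independent of the subsequence.

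To pin down $P$ on $H^- := \{x_n < 0\}$ I would use the global weak equation together with the known form of $P$ on $H^+$. Testing the identity $\int M(P)\nabla P \cdot \nabla \varphi = 0$ against $\varphi$ supported across the interface and integrating by parts on $H^+$ (where $\nabla P = -\beta e_n$ is constant) reduces $L^M P = 0$ to a transmission condition for $P|_{H^-}$: this restriction is $L_M$-harmonic, vanishes on $\partial H^-$, has at most linear growth, and its normal derivative on the interface equals $\beta \langle M\nu, \nu\rangle$. A further Liouville application on $H^-$ yields $P(x) = -\alpha x_n$ on $H^-$ with $\alpha := \beta \langle M\nu, \nu\rangle$. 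Consequently $P = \alpha(\nu\cdot x)^+ - \beta(\nu\cdot x)^-$ with $\alpha, \beta \ge 0$ and $\nu$ uniquely determined by $u$, $M$, and $B$. Since the limit does not depend on the subsequence, the full family $u_r$ converges locally uniformly to $P$, which is equivalent to the expansion $u(x) - P(x - x_0) = o(|x - x_0|)$.

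Properties (1) and (2) then follow with little additional work: $\{P = 0\}$ is a hyperplane and therefore of Lebesgue measure zero, so locally uniform convergence $u_r \to P$ upgrades to convergence of the positive and negative phases of $u_r$ in Lebesgue measure, giving \eqref{e:l1setconv} after unrescaling; strong $L^2_\loc$ convergence of $u_r$ follows from Rellich, while the weak $L^2_\loc$ convergence of $\nabla u_r$ was already established. The main obstacle I anticipate is the Liouville rigidity on $H^-$, the side of the free boundary where no direct geometric information is available: the conclusion that $P$ is linear there has to be extracted from the global weak PDE rather than from a one-sided monotonicity argument, and this is precisely where the multi-operator two-phase structure makes the situation genuinely more delicate than the single-operator case, in which $P$ would instead arise as a nodal extension of a single global elliptic polynomial.
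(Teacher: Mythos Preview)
Your route differs substantially from the paper's. The paper does not attempt to rederive the expansion \eqref{e:am36}; it imports it wholesale from \cite[Lemma~3.6]{AM}, where the two-plane asymptotic at a touching point is obtained by barrier and comparison arguments in the Caffarelli tradition. With \eqref{e:am36} already in hand, the paper then proves (1), (2), (3) \emph{in that order}: (1) from locally uniform convergence plus dominated convergence on indicators, (2) from Caccioppoli (using the linear bound $|u_r|\lesssim N+1$ on $B_{N+1}$ supplied by \eqref{e:am36}) together with uniqueness of the limit, and finally (3) from (1) and (2) by estimating the coefficient error $\int (M(u_r)-M(P))\nabla u_r\cdot\nabla\varphi$ over the symmetric-difference set.

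Your self-contained blow-up tries to reverse this logical order---prove (3) first, then use it with a Liouville argument to identify $P$ on $H^-$---and this is where a genuine gap sits. Passing $\int M(u_r)\nabla u_r\cdot\nabla\varphi=0$ to the limit requires $M(u_r)\nabla\varphi\to M(P)\nabla\varphi$ strongly in $L^2$, hence $M(u_r)\to M(P)$ a.e., hence $|\{P=0\}|=0$. On $H^+$ you have this from the touching-ball step once $\beta>0$, but on $H^-$ you have no a~priori information about $\{P=0\}$; that is precisely what you hope to extract from $L^M P=0$. So the argument is circular as written, and the difficulty you flag in your last paragraph is not just delicate---it is the missing ingredient. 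A second, smaller gap is the uniform linear growth of $u$ at $x_0$ on the non-touching side: solutions of $L^M u=0$ with coefficients jumping across $\{u=0\}$ are a priori only H\"older, so ``interior Lipschitz regularity of $L^M$-solutions'' is not available off the shelf; the linear bound at $x_0$ is exactly the content of \cite[Lemma~3.6]{AM}, and once you invoke it you may as well follow the paper's order.
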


\begin{remark}
    The existence of a unique two-plane solution at points with touching balls is demonstrated in \cite[Lemma 3.6]{AM}. {However, one should be careful about concluding the uniqueness of the two-plane solution from the non-degeneracy of the Alt-Caffarelli-Friedman density alone. Indeed the uniqueness may fail, see \cite{Allen-Kriv}. However, in our setup, this uniqueness is guaranteed; the presence of a touching ball prevents the zero-set of the blow-up from rotating, while the uniqueness of the slopes is guaranteed by \cite[Lemma 11.17]{CaffSalsa}.}
\end{remark}

\begin{proof}
    The asymptotic development \eqref{e:am36} is the conclusion of \cite[Lemma 3.6]{AM}, {see also \cite[Lemma 11.17]{CaffSalsa}. Note that the conclusion of \cite[Lemma 3.6]{AM} is indeed for weak solutions of \eqref{e:B-sol}; we have not yet established that $u$ is a viscosity solution.} This immediately implies the rescalings $u_{r}$ converge locally uniformly to $P$. To prove \eqref{e:l1setconv} we first suppose without loss of generality that $x_{0} = 0$ and observe that $\Omega^{\pm}(P)$ are half spaces and hence invariant under scaling. Then for any fixed $N>0$ we have
    \begin{align*}
        \frac{|(\Omega^{\pm}(u) \Delta \Omega^{\pm}(P)) \cap B_{rN}|}{|B_{rN}|}  & = \frac{|(\Omega^{\pm}(u_{rN}) \Delta \Omega^{\pm}(P)) \cap B_{1}|}{|B_{1}|} \\
        & \le \int_{B_{1}} |\eye_{\Omega^+(u_{rN})} - \eye_{\Omega^+(P)}| + |\eye_{\Omega^-(u_{rN})} - \eye_{\Omega^-(P)}| \,,
    \end{align*}
    which is seen to go to zero by the dominated convergence theorem and the fact that $u_{r} \to P$ locally uniformly, so in particular pointwise, which in turn implies that $\Omega^\pm(u_{r})$ converges pointwise to $\Omega^\pm(P)$.

    To prove (2) we must first show that for any $N >1$ and  all $r$ sufficiently small, $\| u_{r}\|_{W^{1,2}(B_{N})} \le C < \infty$ for some constant $C$. Indeed, this suffices since then for any sequence $\{u_{r_{i}}\}$ there exists a subsequence $\{u_{r_{i_{j}}}\}$ and a function $g \in W^{1,2}(B_{N})$ (which may a priori depend on the subsequence) so that $u_{r_{i_{j}}} \xrightarrow{L^{2}} g$ and $\nabla u_{r_{i_{j}}} \xrightharpoonup{L^{2}} \nabla g$. However, by \eqref{e:am36} we then deduce that $g = P$. Since $N$ is arbitrary, this will prove (2).

    To verify the $W^{1,2}(B_{N})$ bound, we first use the Caccioppoli inequality with a function $\varphi \in W^{1,\infty}_{0}(B_{N+1})$ so that $\varphi \equiv 1$ on $B_{N}$ and $\|\nabla \varphi\|_{L^\infty(B_{N+1})} \lesssim 1$ to deduce
    \begin{align*}
    \int_{B_{N}} |\nabla u_{r}|^{2} \le \int |\nabla u_{r}|^{2} \varphi^{2} \lesssim_{\Lambda_{0}} \int |\nabla \varphi|^{2} u^{2} \lesssim \int_{B_{N+1} } u_{r}^{2}.
    \end{align*}
    By \eqref{e:am36}, it follows that for $r$ small enough that on $B_{N+1}$ we have $|u_{r}| \lesssim N+1$ with suppressed constants independent of $N$ so that
    $$
        \int_{B_{N}} |\nabla u_{r}|^{2} \lesssim (N+1)^{2} |B_{N+1}|.
    $$
    In particular again using \eqref{e:am36} implies that
    $$
        \|u_{r}\|_{W^{1,2}(B_{N})}^{2} = \|u_{r}\|_{L^{2}(B_{N})}^{2} + \|\nabla u_{r}\|_{L^{2}(B_{N})}^{2} \lesssim {N^{n+2} + (N+1)^{n+2}} < \infty
    $$
    for all $r$ sufficiently small, completing the proof of (2).

    We next claim that (3) follows from (1) and (2). Indeed, for $\varphi \in W^{1,{2}}_{0}(B_{N})$, (2) implies
    \begin{align} \label{e:3.1}
        \int & \langle M(P) \nabla P, \nabla \varphi \rangle \overset{(2)}{=} \lim_{r \to 0} \int \langle M(P) \nabla u_{r}, \nabla \varphi \rangle\,.
    \end{align}
    On the other hand, 
    \begin{align*}
        \limsup_{r \to 0} & \bigg| \int \Langle M(P) \nabla u_{r}, \nabla \varphi \Rangle - \int \Langle M(u_{r}) \nabla u_{r}, \nabla \varphi \Rangle \bigg|  \nonumber \\
        & \le \limsup_{r \to 0} \int_{( \Omega^{+}(u_{r}) \Delta \Omega^{+}(P)) \cap B_{N}} |M(u_{r}) - M(P)| |\nabla u_{r}| |\nabla \varphi| \nonumber \\
        & \lesssim_{\Lambda_{0}} {\limsup_{r \to 0} \left( \int_{ (\Omega^{+}(u_{r}) \Delta \Omega^{+}(P)) \cap B_{N}} |\nabla \varphi|^{2} \right)^{1/2}}  \|\nabla u_{r}\|_{L^{2}(B_{N})} = 0
    \end{align*}
    where the final equality uses (1), (2), Cauchy-Schwarz, {and the fact that $\nabla \varphi \in L^{2}$, so that in particular $|\nabla \varphi|^{2} \, dx$ defines a measure which is mutually absolute continuous with respect to the Lebesgue measure.} Combining this with \eqref{e:3.1} verifies (3).
\end{proof}
}

\begin{lemma} \label{l:weak2visc}
    Let $M$ and $u$ be as in Assumption \ref{a:blownup}. Then $u$ satisfies $L^M u=0$ in a viscosity sense.    
\end{lemma}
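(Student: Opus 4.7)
The plan is to verify each clause of Definition \ref{d:viscosity} separately; these naturally split into an interior regularity statement and a free-boundary transmission condition.

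For the interior claim (1), I would invoke standard elliptic regularity for smooth-coefficient linear equations. In the open set $\Omega^+(u)$ the matrix $M(u)$ is constantly equal to $\id$, so the restriction of $u$ to $\Omega^+(u)$ is a weak $W^{1,2}_{\loc}$ solution of $\Delta u = 0$; hence $u$ is smooth there and $\Delta u = 0$ classically, and in particular $\Delta u \ge 0$. The identical argument in $\Omega^-(u)$ yields $L_M u = 0$ classically. Since an equation is automatically both a sub- and a super-inequality, the requirement in part (1) and its super-solution analogue follow immediately.

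For the boundary condition (2), I would fix a touching ball $B(y,\rho) \subset \Omega^+(u)$ and a point $x_0 \in \partial B(y,\rho) \cap \partial \Omega^+(u)$. Continuity of $u$ forces $u(x_0) = 0$, which is precisely the hypothesis of Lemma \ref{l:twoplane}. That lemma produces a unique two-plane $P(z) = \alpha(\nu \cdot z)^+ - \beta(\nu \cdot z)^-$, with $\nu$ the inward unit normal to $B(y,\rho)$ at $x_0$, for which
\[
u(x) - P(x - x_0) = o(|x - x_0|),
\]
and moreover asserts in item (3) that $P$ itself solves $L^M P = 0$ weakly. So all that remains is to show that this weak equation on the two-plane $P$ forces exactly the algebraic relation between $\alpha$ and $\beta$ demanded by Definition \ref{d:viscosity}(2).

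Translating the weak PDE on the two-plane into the viscosity boundary inequality is the heart of the matter and the only step that requires genuine computation. Here I would observe that $\nabla P$ is piecewise constant, equal to $\alpha \nu$ on $\{\nu \cdot z > 0\}$ and $\beta \nu$ on $\{\nu \cdot z < 0\}$, so the flux $M(P) \nabla P$ is likewise piecewise constant with jump $\alpha \nu - \beta M \nu$ across the hyperplane $\{\nu \cdot z = 0\}$. Its distributional divergence is therefore a surface measure carried by this hyperplane whose density is the normal component of the jump, and setting this to zero pins down exactly the relation between $\alpha$, $\beta$ and $\sum_{i,j} \nu_i \nu_j M^{ij}$ needed for Definition \ref{d:viscosity}(2), realizing the sub-solution inequality (and, simultaneously, the super-solution inequality) with equality. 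The super-solution case is handled by the mirror-image argument using a touching ball $B(y,\rho) \subset \Omega^-(u)$, and together they show that $u$ is both a viscosity sub- and super-solution, hence a viscosity solution.
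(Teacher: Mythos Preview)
Your proposal is correct and follows essentially the same approach as the paper: invoke Lemma~\ref{l:twoplane} to get the two-plane expansion at a touching point, then use part~(3) of that lemma (that $P$ itself is a weak solution of $L^M P=0$) to extract the algebraic relation between $\alpha$, $\beta$, and $\nu^t M \nu$. The only cosmetic difference is that the paper carries out the final step via explicit integration by parts against a nonnegative test function $\varphi\in W^{1,2}_0(B_r)$, reducing to a surface integral on $\{P=0\}$, whereas you phrase the same computation as identifying the distributional divergence of the piecewise-constant flux $M(P)\nabla P$ with a surface measure whose density is the normal jump; these are the same calculation in different language.
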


In \cite{AM} it is claimed without proof that \eqref{e:am36} implies weak solutions are viscosity solutions. While the fact that the first condition in Definition \ref{d:viscosity} is satisfied is classical, we fill in the details to prove the second condition in Definition \ref{d:viscosity} also follows from Lemma \ref{l:twoplane} as claimed.

\begin{proof}[Proof of Lemma \ref{l:weak2visc}]
    By \eqref{e:am36} from Lemma \ref{l:twoplane}, we know that if there exists a touching ball $B(y,\rho)$ contained in either $\Omega^+$ or $\Omega^-$ and $\partial B(y,\rho) \cap \{u=0\} \ni x_{0}$ then if $\nu$ is the inward pointing normal direction to $B(y,\rho)$ at $x_{0}$, there exists $\alpha, \beta$ so that
    $$
        u(x) = \alpha ((x-x_{0}) \cdot \nu)^{+} + \beta ((x-x_{0}) \cdot \nu)^{-} + o(|x-x_{0}|) =: P(x-x_0) + o(|x-x_{0}|).
    $$
    Without loss of generality, suppose $x_{0} = 0$. It remains to check that the relationship between $\alpha$ and $\beta$.  Let $v_{1}(x) =  \alpha (x \cdot \nu)^{+}$ and $v_{2}(x) = \beta (x \cdot M \nu)^{-}$ so that $M(P) \nabla P = \mathbf{1}_{P> 0} \nabla v_{1} + \mathbf{1}_{P\leq 0}  \nabla v_{2}$. Fix $0 \le \varphi \in W^{1,2}_{0}(B_{r})$. Then, because $P$ satisfies $L^M P=0$ in a weak sense, by Lemma \ref{l:twoplane} (3) we have 
\begin{align*}
    0 &\le \int_{B_{r}} M(P) \nabla P \cdot \nabla \varphi \, dx \\
    &= \int_{B_{r} \cap \Omega^+(P)} \nabla v_{1} \cdot \nabla \varphi \, dx + \int_{B_{r} \cap \Omega^-(P) }  \nabla v_{2} \cdot \nabla \varphi \, dx \\
    & = - \int_{B_{r} \cap \Omega^+(P)} \varphi \Delta v_{1} \, dx - \int_{B_{r} \cap \Omega^-(P)} \varphi \Delta v_{2} \, dx \\
    &+ \int_{\partial(B_{r} \cap \Omega^+(P))} \varphi \partial_{\nu} v_{1} d \cH^{n-1} + \int_{\partial(B_{r} \cap \Omega^-(P))} \varphi \partial_{\nu} v_{2} d \cH^{n-1} \\
    & = \int_{ \{P=0\} \cap B_{r}} \varphi \left( \alpha \nu - M \beta \nu \right) \cdot \nu d \cH^{n-1} \,.
\end{align*}
Here, the final equality uses $\varphi|_{\partial B_{r}} = 0$ (in the Sobolev trace sense) and $\Delta v_{1}= \Delta v_{2} = 0$. Since $\varphi \ge 0$ was arbitrary in $W^{1,2}_{0}(B_{r})$, this in turn implies ${\alpha = \alpha \nu^t \nu} \ge \beta \nu^{t}M\nu$ confirming the desired inequality. Repeating the argument with a touching ball from the other side, and using that $P$ is a weak super-solution of $L^M(\cdot)=0$ {confirms $\alpha \le \beta \nu^{t} M \nu$ verifying that $P$ is a solution to $L^{M}(\cdot) = 0$ in the viscosity sense.}
\end{proof}

\subsection{\texorpdfstring{$\eps$-monotonicity}{epsilon-monotonicity}  for blow-ups}\label{ss:eps-mon}
In this section we will work under the following assumption, {which we know is satisfied by the $\Lambda$-tangents in $F^{*}$} thanks to Theorem \ref{t:blowups}, Corollary \ref{c:Lambda-tangent-FB}, and Lemma \ref{l:weak2visc}.
\begin{assumption}\label{a:eps-mon}
     Assume $n \ge 3$ and that $\omega \in \cD(\id, M)$ for some $\Lambda_{0}$-elliptic matrix $M$. In particular, $0 \in \spt \omega$ and $L^{M}u=0$ on $\R^{n}$ in a viscosity sense, where $u = u^{+} - u^{-}$, and $u^{\pm}$ are Green's functions with poles at infinity for $(\Omega^\pm(u), \omega, L^M)$.
\end{assumption}

We begin by showing that closeness to flat in the sense of measures $\omega\in \Dcal(\Id,M)$ guarantees closeness to two-plane solutions for the associated difference of Green's functions.
\begin{lemma}\label{l:compactness}
    Let $\Lambda_{0}, r_{0} > 0$ be fixed. For any $\eta > 0$ there exists $\delta = \delta(n, \Lambda_{0}, \eta,r_{0})>0$ such that the following holds. Suppose $M$, $\omega$ and $u$ are as in Assumption \ref{a:eps-mon}. If for all $r \ge r_{0}$, 
    $$
        d_{r}(\omega, \cF) \le \delta
    $$
    then
    $$  
        \inf_{P \text{ two-plane solutions}} \|u-P\|_{L^{\infty}(B_{1})} \le \eta \, \omega(B_{1}) \,,
    $$
    where we are taking two-plane solutions $P$ of $L^M (\cdot) =0$.
\end{lemma}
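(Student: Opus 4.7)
The plan is to prove this via a compactness-and-contradiction argument, exploiting the blow-up framework already developed in Section \ref{s:blowup} together with the continuity properties of the $d_r$ pseudodistance from Proposition \ref{p:kptrs}.

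\medskip

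Suppose for contradiction that the conclusion fails. Then there exist $\eta>0$, $r_0>0$, a sequence $\delta_k\downarrow 0$, $\Lambda_0$-elliptic matrices $M_k$, measures $\omega_k\in \Dcal(\id,M_k)$ with associated Green's functions $u_k^\pm$ (and $u_k = u_k^+ - u_k^-$) such that $d_r(\omega_k,\Fcal)\le \delta_k$ for every $r\ge r_0$, while
\[
    \inf_{P}\|u_k-P\|_{L^{\infty}(B_1)} > \eta\,\omega_k(B_1),
\]
the infimum being over two-plane solutions of $L^{M_k}(\cdot)=0$. By rescaling within $\Dcal(\id,M_k)$, which is a $d$-cone, we may normalize so that $\omega_k(B_1)=1$. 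Since the $M_k$ lie in a compact set of $\Lambda_0$-elliptic matrices, we may pass to a subsequence and assume $M_k\to M_\infty$ where $M_\infty$ is still $\Lambda_0$-elliptic.

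\medskip

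Next I would extract geometric limits. The uniform doubling (Lemma \ref{l:doubling}) and uniform NTA bounds for the domains $\Omega_k^\pm$ underlying $\omega_k$ imply, exactly as in the proof of Theorem \ref{t:blowups}, that up to a further subsequence we have Hausdorff convergence $\Omega_k^\pm \to \Omega_\infty^\pm$ (to unbounded complementary NTA domains), uniform-on-compacts convergence $u_k^\pm\to u_\infty^\pm$, and weak-$*$ convergence $\omega_k\wslim \omega_\infty$ with $\omega_\infty(B_1)\in[C^{-1},1]$. Passage to the limit in the defining weak formulation of $\Dcal(\id,M_k)$, using the continuity of $M\mapsto L^M$ on uniform-on-compacts convergence (carried out as in the proof of (3) of Lemma \ref{l:twoplane} via the $L^2_{\loc}$-convergence of $\nabla u_k^\pm$ together with the Caccioppoli bound and dominated convergence for the symmetric-difference sets), yields $\omega_\infty \in \Dcal(\id,M_\infty)$ with Green's functions $u_\infty^\pm$.

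\medskip

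Now I would identify $\omega_\infty$ as a flat measure. Since $F_r(\omega_\infty)>0$ for every $r\ge r_0$ (using $\omega_\infty(B_1)\ge C^{-1}$ and doubling), Proposition \ref{p:kptrs}(iii) combined with the closedness of the flat-measure basis gives
\[
    d_r(\omega_\infty,\Fcal)=\lim_k d_r(\omega_k,\Fcal)\le \lim_k \delta_k = 0\qquad \forall\,r\ge r_0.
\]
Thus $\omega_\infty\mres B_r$ is $(n-1)$-Hausdorff measure on a plane $\pi_r$ times a constant $c_r$ for every $r\ge r_0$. Consistency of these restrictions forces $\pi_r\equiv \pi$ and $c_r\equiv c$, and the NTA structure of $\Omega_\infty^\pm$ propagates the flatness inside $B_{r_0}$ (one cannot close up an NTA domain whose boundary is a hyperplane outside a compact set without it being a hyperplane globally). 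Hence $\omega_\infty = c\,\Hcal^{n-1}\mres \pi \in \Fcal$, $\Omega_\infty^\pm$ are the two half-spaces determined by $\pi$, and the Green's functions with pole at infinity for half-spaces with constant coefficients $\id$ and $M_\infty$ are explicit affine functions of the signed distance to $\pi$. Consequently $u_\infty = P_\infty$ is a two-plane solution of $L^{M_\infty}(\cdot)=0$, with ratio $\alpha/\beta$ determined by $\nu^t M_\infty\nu$ via the interface balance derived in Lemma \ref{l:weak2visc}.

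\medskip

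Finally, since $M_k\to M_\infty$, the two-plane solution $P_k$ of $L^{M_k}(\cdot)=0$ obtained from $P_\infty$ by perturbing the slope ratio to satisfy the interface condition for $M_k$ converges uniformly to $P_\infty$ on $B_1$. Combined with $u_k\to u_\infty=P_\infty$ uniformly on $\overline{B_1}$, this gives $\|u_k-P_k\|_{L^\infty(B_1)}\to 0$, contradicting the assumed lower bound $\eta\omega_k(B_1)=\eta$. The main delicate point is step three, namely showing rigorously that a measure in $\Dcal(\id,M_\infty)$ whose large-scale restrictions are flat must itself be a flat measure; everything else follows from machinery already assembled in Sections \ref{s:prelim}--\ref{s:freeboundary}.
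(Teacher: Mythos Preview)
Your proof is correct and follows the same compactness--contradiction strategy as the paper: normalize, extract limits of the measures and Green's functions via doubling and the uniform NTA/De Giorgi--Nash--Moser bounds, identify the limit measure as flat using continuity of $d_r$, recognize the limit Green's functions as linear via the half-space Liouville theorem, and contradict the assumed lower bound.

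Two remarks. First, you are slightly more careful than the paper in allowing the matrices $M_k$ to vary (as indeed one must, since the claimed $\delta$ depends only on $\Lambda_0$ and not on $M$); this is why you need the extra perturbation step from $P_\infty$ to $P_k$ at the end, which the paper's proof, written for a fixed $M$, omits. Second, your flagged ``delicate point'' in step three is a non-issue: the hypothesis is $d_r(\omega_\infty,\cF)=0$ for all $r\ge r_0$, which \emph{includes} $r=r_0$, and $d_{r_0}(\omega_\infty,\cF)=0$ already says that $\omega_\infty\mres B_{r_0}$ coincides (up to a scalar) with a flat measure on all of $B_{r_0}$. Consistency of the restrictions for $r\ge r_0$ then gives $\omega_\infty\in\cF$ globally, with no need to invoke the NTA structure to ``propagate flatness inside $B_{r_0}$''. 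So you can simply delete that clause and the closing caveat.
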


\begin{proof}
    Suppose that the conclusion of the lemma fails.  Then, there exists a sequence of measures $\widetilde{\omega}_{k} \in \cD(\id,M)$, which we immediately replace with the measures $\omega_{k} = \widetilde{\omega_{k}}(B_{1})^{-1} \widetilde{\omega_{k}}$, so that
    \begin{equation} \label{e:approachingflat}
        d_{r}(\omega_{k},\cF) \le 2^{-k} \qquad \forall r \ge r_{0}\,,
    \end{equation}
    but 
    \begin{equation} \label{e:not2ps}
        \inf_{P \text{ two plane solutions}} \|u_{k}-P\|_{L^{\infty}(B_{1})} \ge \eta_{0} \,
    \end{equation}
    where $u_{k}^{\pm}$ are the Green's functions with pole at infinity guaranteed in Definition \ref{d:cda1a2}. 
    
     Since $\omega_k(B_1)=1$ for each $k$, Lemma \ref{l:doubling} in turn implies 
     \begin{equation} \label{e:doubling}
     \omega_{k}(B_{2^{N}}) \lesssim C^{N}
     \end{equation} where $C$ is the doubling constant from Lemma \ref{l:doubling} and the suppressed constants are independent of $k$. Thus, there exists a convergent subsequence (not relabeled) with $\omega_{k} \xrightharpoonup{*} \omega_{\infty}$. Since $d_{r}(\cdot, \cF)$ is continuous with respect to weak-$*$ convergence, it follows from \eqref{e:approachingflat} that $d_{r}(\omega_{\infty},\cF) = 0$ for all $r \ge r_{0}$. This guarantees that $\omega_{\infty} \in \cF$. On the other hand, since \eqref{e:doubling} holds for all $k \in \N$, Lemma \ref{l:cfms} implies $|u_{k}^{\pm}(x_{0,N})| \lesssim_{N} 1$ for all $k$. By Harnack chains this implies $\|u_{k}^{\pm}\|_{L^{2}(B(0,N))} \lesssim_{N} 1$ for all $k$ and all $N$. In particular, De Giorgi-Nash-Moser implies $\{u_{k}\}$ are locally uniformly equicontinuous and we assumed $u_{k}(0) = 0$. So, Arzela-Ascoli's theorem produces some function $u_{\infty}$ that is the locally uniform limit of $u_{k}$, up to another subsequence that we do not relabel.

    As in Theorem \ref{t:blowups}, the relationship between $u_{k}$ and $\omega_{k}$ passes to limit, that is we also know
    $$
        \int \varphi \, d \omega_{\infty} = \int \langle M(u_{\infty}) \nabla u_{\infty}, \nabla \varphi \rangle \qquad \forall \varphi \in C_{c}^{\infty}(\R^{n})
    $$
    and in particular that $L^{M}u_{\infty} = 0$. Writing $u_{\infty} = u_{\infty}^{+} - u_{\infty}^{-}$. Since $\omega_{\infty} \in \cF$ we have that $u_{\infty}^{+}$ is a positive harmonic function on a half-space, vanishing at the boundary. This implies $u_{\infty}^{+} = \alpha (x \cdot \nu)^{+}$, where $\nu$ is the inward pointing normal to $\Omega^{+}(u)$ (see e.g. \cite{BB}). Similarly, one confirms that $u_{\infty}^{-}$ is of the form $\beta (x \cdot \nu)^{-}$ by a rigid change of variables. In particular, there is a two-plane solution $P$ so that $u_\infty = P$.  But since $u_{k}$ converges to $u_{\infty}$ uniformly on $B_{1}$, this contradicts \eqref{e:not2ps}.
\end{proof}

Recalling that we aim to use the techniques of \cite{Caff1-Lip-implies-reg,Caff2-flat-implies-Lip,Caff3} (see also \cite{AM,Feldman}), we are now in a position to derive $\eps$-monotonicity of solutions to our limiting free boundary problem.
\begin{definition}[$\eps$-monotonicity]
	Let $\eps>0$ and $U\subset \R^n$ open. We say that $u\in C(U)$ is $\eps$-monotone in direction $v\in \Sbb^{n-1}$ on $U$ if for every $\tilde\eps \geq\eps$ we have $u(x- \tilde\eps v) \leq u(x)$ for every $x, {x- \tilde\eps v}\in U$.
	
	We say that $u$ is $\eps$-monotone in the cone $\Gamma(\theta,e) := \{v \in \Sbb^{n-1}: v\cdot e \geq \cos\theta \}$ if $u$ is $\eps$-monotone in direction $v$ for every $v\in \Gamma(\theta,e)$.
\end{definition}

\begin{lemma}\label{l:eps-mon}
    Let $\Lambda_{0}, r_{0} > 0$ and let $\omega, u,M$ be as in Assumption \ref{a:eps-mon}. For every $\eps \in (0,1)$ and $0 < \eta < \frac{\eps}{4C}$ for a constant $C$ depending only on the NTA constants the following holds:

    There exists $0 < \delta_{0}= \delta_{0}(n, \Lambda_{0}, C_2,R_2, \eta)$ given by the conclusion of Lemma \ref{l:compactness} such that if $d_{r}(\omega, \cF) \le \delta_{0}$  for every $r \ge r_{0}$ , then there is a direction $\nu$ so that $u$ is $\eps$-monotone in $B_{1/2}$ in the cone $\Gamma(\theta,\nu)$ for any $\theta$ so that $\cos(\theta) \ge \frac{4 \eta C}{\eps}$.
\end{lemma}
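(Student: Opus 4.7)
The plan is to derive $\eps$-monotonicity of $u$ by comparing it to a two-plane solution supplied by Lemma \ref{l:compactness} and exploiting the explicit monotonicity of that two-plane solution. With $\eta$ as in the statement, set $\delta_{0} := \delta(n,\Lambda_{0},\eta,r_{0})$ from Lemma \ref{l:compactness}. If the flatness hypothesis $d_{r}(\omega,\cF)\le \delta_{0}$ holds for every $r\ge r_{0}$, then Lemma \ref{l:compactness} yields a two-plane solution $P(x)=\alpha(x\cdot\nu)^{+}-\beta(x\cdot\nu)^{-}$ of $L^{M}(\cdot)=0$, for some $\nu\in\Sbb^{n-1}$ and $\alpha,\beta\ge 0$, with
\[
\|u-P\|_{L^{\infty}(B_{1})}\le \eta\,\omega(B_{1}).
\]
This $\nu$ will be the axis of the cone $\Gamma(\theta,\nu)$ in the conclusion.

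The first key step is a non-degeneracy statement: both $\alpha$ and $\beta$ are comparable to $\omega(B_{1})$. Precisely, I claim there exists $C=C(\text{NTA constants})$ such that $\min(\alpha,\beta)\ge C^{-1}\omega(B_{1})$; this is exactly the constant $C$ appearing in the statement. Since $\omega\in\cD(\id,M)$, both Green's functions $u^{\pm}$ with pole at infinity are strictly positive on $\Omega^{\pm}(u)$, and by Lemma \ref{l:cfms} applied at corkscrew points $x^{\pm}_{0,1}$ of $\Omega^{\pm}(u)$ for $\Delta(0,1)$, one has $u^{\pm}(x^{\pm}_{0,1})\asymp \omega(B_{1})$ with implicit constants depending only on the NTA data and the ellipticity (which is fixed to $\Lambda_{0}$). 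Taking $\eta$ small enough (as we may) and using the $L^{\infty}$-closeness of $u$ to $P$ forces $\alpha|x^{+}_{0,1}\cdot\nu|\asymp \omega(B_{1})$ and likewise $\beta|x^{-}_{0,1}\cdot\nu|\asymp\omega(B_{1})$, with the distances from corkscrew points to the boundary hyperplane $\{x\cdot\nu=0\}$ bounded below by a constant depending only on the NTA data. This is the step that I expect to be the main obstacle, since it hinges on transferring the corkscrew geometry of $\Omega^{\pm}(u)$ to a quantitative statement about the flat limit; the argument should parallel \cite[Proposition 4.4]{AM} (the lemma restated here as Lemma \ref{l:twoplane}).

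The second key step is a direct monotonicity inequality for the two-plane solution $P$. A short case analysis on the sign of $x\cdot\nu$ and of $(x-tv)\cdot\nu$ shows that for any $v\in \Sbb^{n-1}$ with $v\cdot\nu\ge 0$ and any $t\ge 0$,
\[
P(x)-P(x-tv)\ge \min(\alpha,\beta)\,t\,(v\cdot\nu).
\]
In particular, for $v\in\Gamma(\theta,\nu)$ we obtain $P(x)-P(x-tv)\ge \min(\alpha,\beta)\,t\,\cos\theta$.

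Combining these two ingredients finishes the proof. For $x\in B_{1/2}$, $\tilde\eps\ge\eps$, and $v\in\Gamma(\theta,\nu)$ with $x-\tilde\eps v\in B_{1/2}\subset B_{1}$, the triangle inequality yields
\[
u(x)-u(x-\tilde\eps v)\ge \bigl(P(x)-P(x-\tilde\eps v)\bigr)-2\eta\,\omega(B_{1})\ge \bigl(C^{-1}\tilde\eps\cos\theta-2\eta\bigr)\omega(B_{1}).
\]
The assumptions $\tilde\eps\ge\eps$ and $\cos\theta\ge \frac{4\eta C}{\eps}$ make the right-hand side non-negative (in fact strictly positive), establishing the $\eps$-monotonicity of $u$ in the cone $\Gamma(\theta,\nu)$ on $B_{1/2}$. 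The condition $\eta<\frac{\eps}{4C}$ in the hypothesis is exactly what makes $\frac{4\eta C}{\eps}<1$, so that such an angle $\theta$ is admissible.
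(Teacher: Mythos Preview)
Your proof is correct and follows essentially the same route as the paper's. The non-degeneracy step you flag as the main obstacle is in fact simpler than you anticipate: since $x^{\pm}_{0,1}\in B_{1}$ one has $|x^{\pm}_{0,1}\cdot\nu|\le 1$, so from $P(x^{+}_{0,1})\ge u^{+}(x^{+}_{0,1})-\eta\,\omega(B_{1})\ge (C^{-1}-\eta)\,\omega(B_{1})$ one immediately gets $\alpha\ge \alpha\,(x^{+}_{0,1}\cdot\nu)=P(x^{+}_{0,1})$, with no lower bound on the distance from the corkscrew point to the hyperplane needed (and Lemma~\ref{l:twoplane} plays no role here; Lemma~\ref{l:cfms} alone suffices).
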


\begin{remark}
    In order to derive $\eps$-monotonicity, we require a certain nondegeneracy of $u$, {in the sense that $u$ needs to be much closer to an optimal two-plane solution \emph{relative to the slope of the two-plane solution}. This is precisely the conclusion of Lemma \ref{l:compactness}, since $\omega(B_1)$ is comparable to the slope of an optimal solution $P$ in light of the CFMS estimates of Lemma \ref{l:cfms}.} Notice that in \cite[Lemma 6.3]{AM}, the authors guarantee the conclusion of Lemma \ref{l:compactness} by considering a rescaling of $u$ around a point in $p \in \{u=0\}$, at a sufficiently small scale (depending on $p$), in order to satisfy the hypothesis of being close to a two-plane solution. This is because they are restricting themselves to points where two-plane blow-ups exist, which we cannot do in the present setting. This explains why we require additional assumptions like Assumption \ref{a:eps-mon} or Assumption \ref{a:blownup} plus NTA; these provide the setup in order to guarantee that the conclusion of Lemma \ref{l:compactness} can be turned into a closeness-proportional-to-slope condition as described above, which is crucial for obtaining $\eps$-monotonicity. {It is also important for us that the dependency of the proportionality $\eta$ required in order to guarantee the $\eps$-monotonicity is \emph{uniform}, so that we may repeat this procedure around different center points with uniform dependencies. This is because we are striving for a \emph{globally uniform} Lipschitz constant bound on $\{u=0\}$, which in turn gives flatness when blowing down, thanks to the $C^{1,\alpha}$-estimate that it induces}.
\end{remark}

\begin{proof}
    Fix $\eps \in (0,1)$ and $0 < \eta  < \frac{\eps}{4 C}$. By Lemma \ref{l:compactness}, there exists $\delta_0 = \delta_0(n, \Lambda_{0},\eta,C_2,R_2) > 0$ such that if $d_{r}(\omega,\cF) \le \delta_0$ for all $r \ge 1$ then there exists a two-plane solution $P = \alpha (x \cdot \nu)^{+} - \beta (x \cdot \nu)^{-}$ so that
    \begin{equation} \label{e:almost2ps}
        \|u-P\|_{L^{\infty}(B_{1})} \le \eta \,  \omega(B_{1}).
    \end{equation}

    We claim this implies $u$ is $\eps$-monotone in $\Gamma(\theta,\nu)$ for $\theta$ such that $\cos(\theta) \ge \frac{4 \eta C}{\eps}$. Note that such $\theta$ exists since $\eta < \frac{\eps}{4 C}$.

    Indeed, if $x^{\pm}_{0,1}$ denote corkscrew points for $\Delta(0,1)\subset \partial \Omega(u)$, then Lemma \ref{l:cfms} implies that $u^{\pm}(x_{0,1}^{\pm}) \ge C^{-1} \omega(B_{1})$ for some constant $C>0$ depending only on $n$ and the NTA constants $C_2,R_2$. Since in particular $\eta < C^{-1}/2$, we deduce from \eqref{e:almost2ps}, the fact that $x_{0,1}^\pm\in B_1$, and the fact that $P$ is a two plane solution with $\{P=0\} = \nu^{\perp}$ that
    $$
        |P(\pm \nu)|  \ge \Big|P\big(x^{\pm}_{0,1}\big)\Big| \ge  \Big|u^{\pm}(x^{\pm}_{0,1})\Big| - \Big|u^{\pm}\big(x^{\pm}_{0,1}\big) - P(\pm x_{0,1}^{\pm})\Big| \ge C^{-1} \omega(B_{1})/2= : m_{0} .
    $$
    In particular, since $P$ is a two-plane solution this implies that
    \begin{equation} \label{e:nondegslope}
        P(x+t\nu) \ge m_{0} t + P(x) \qquad \forall x \in \R^{n} \quad \forall t >0.
    \end{equation}

    Combining \eqref{e:almost2ps} with \eqref{e:nondegslope}, it follows if $x \in B_{1/2}$,
    $$
        u(x) \le P(x) + \eta \omega(B_{1}) \le P(x+t\nu) - m_{0}t + \eta \omega(B_{1}) \le u(x+t\nu) - m_{0} t + 2 \eta \omega(B_{1}).
    $$
    so that $u(x + t \nu) \ge u(x)$ if $t \ge \frac{2 \eta \omega(B_{1})}{m_{0}} = 4 \eta C$. Since $\eps \ge 4 \eta C$, this demonstrates that $u$ is $\eps$-monotone in direction $\nu$. Because $P$ is a two-plane solution, one could re-write \eqref{e:nondegslope} as saying that 
    $$
        P(x+te) \ge m_{0} \langle e,\nu \rangle t + P(x)\,,
    $$
    so an analogous computation verifies $\eps$-monotonicity for $\eps\geq \frac{4 \eta}{C \cos(\theta)}$ in $\Gamma(\theta,\nu)$. Recalling the definition of $m_0$, the proof is complete.
\end{proof}

\subsection{\texorpdfstring{$\eps$-monotonicity implies Lipschitz boundary}{epsilon-monotonicity implies Lipschitz boundary} }\label{ss:eps-mon-implies-Lip}
Armed with Lemma \ref{l:eps-mon}, we are now in a position to follow the arguments of \cite{Caff2-flat-implies-Lip, Wang-nonlinear-flat-implies-Lip,AM} (see also \cite{Ferrari-Salsa}) to obtain (in Lemma \ref{t:Lip}) local Lipschitz regularity of our free boundary, which will in turn imply (in Theorem \ref{t:Lip-implies-C1alpha}) local $C^{1,\alpha}$ regularity, see \cite{Caff1-Lip-implies-reg, Feldman,Cerutti-Ferrari-Salsa}. This will then allow us to obtain a global Liouville-type result (see Corollary \ref{c:flat-fb}). The reader may observe that although the results in \cite{Caff2-flat-implies-Lip} are stated for a rather specific two-phase free boundary problem where the operator on each side is the Laplacian, with a transmission condition across the boundary, the results hold for more general free-boundary problems, in particular the one of the setting herein, provided that
\begin{itemize}
    \item[(i)] One is considering viscosity solutions to a two-phase free-boundary problem with a transmission condition at the boundary that satisfies a suitable monotonicity and regularity condition; see e.g. \cite[Theorem 1]{Caff2-flat-implies-Lip}.
    \item[(ii)] The operator for the interior constraint in the free-boundary problem satisfies a suitable comparison principle, including at the boundary (boundary Harnack). Note that one does not require the operator to be linear in general, see e.g. \cite{Wang-nonlinear-flat-implies-Lip}.
    \item[(iii)] The solution to our free boundary problem is $\eps$-monotone in some initial cone of directions.
\end{itemize}
We refer the reader to \cite[Chapters 4 \& 5]{CaffSalsa} for a good presentation of the ideas in such arguments.

\begin{remark}
    Although the more recent and flexible methods of De Silva \cite{DeSilva} (see also \cite{DFS-2phase}) are now commonly used in place of Caffarelli's original techniques to establish local $C^{1,\alpha}$ regularity for sufficiently flat free boundaries, it seems that, {aside from the fact that we do not have the stronger form of non-degeneracy required to implement De Silva's approach (see (H2) in \cite{DFS-2phase}}, the methods of Caffarelli are more well-suited to the kind of global rigidity (exact flatness) statement we are seeking herein, due to the fact that the argument passes through an intermediate Lipschitz bound.
\end{remark}

The main result of this section is the following, which is the conclusion of \cite[Proposition 6.11, Theorem 6.12]{AM} (see also \cite[Lemma 5.7]{CaffSalsa}).

\begin{theorem}\label{t:Lip}
    Suppose that $u$, $\omega$, $M$ satisfy Assumption \ref{a:eps-mon} and $r_0>0$ is fixed, there exists $\delta_1=\delta_1(n,\Lambda_0,C_2,R_2)>0$ such that if $d_r(\omega,\Fcal)\leq \delta_1$ for every $r\geq r_0$, then $u$ is fully monotone in $\Ccal_{1/4}:= (B_{1/4} \cap \nu^\perp)\times (-\tfrac{1}{4}, \frac{1}{4})$ in the cone $\Gamma(\theta_1,\nu)$ for some $\theta_1(\theta_0,\eps)$. In particular, $\{u=0\}\cap \Ccal_{1/4}$ is a Lipschitz graph with Lipschitz constant $\kappa=\kappa(n,\Lambda_0,C_2,R_2)>0$, over $\nu^\perp$. Here, $\nu^\perp$ denotes the $(n-1)$-dimensional linear subspace of $\R^n$ orthogonal to $\nu$.
\end{theorem}

We will not provide all of the details of the proof of Theorem \ref{t:Lip}, since they are contained in \cite[Section 6]{AM}. Nevertheless, for the purpose of clarity, we will provide a breakdown of the important intermediate results leading towards its conclusion, and emphasize the parts for which the arguments differ to their classical counterparts in \cite{Caff2-flat-implies-Lip}. {We will also demonstrate how all the intermediate results that we state herein are then combined in order to conclude Theorem \ref{t:Lip}, for the benefit of the reader.

Let us begin with stating the key underlying improvement of $\eps$-monotonicity (cf. \cite[Proposition 6.11]{AM} and \cite[Lemma 5.7]{CaffSalsa}) underlying Theorem \ref{t:Lip}, which is the ultimate goal of the scheme of Caffarelli.  The moral is that one may gain $\eps$-monotonicity (by a proportional factor) at the cost of shrinking the angle of the cone in which it holds (as well as the domain of the function) by an \emph{additive} constant that is a fractional power of $\eps$.

\begin{proposition}\label{p:impr-of-mon}
    Suppose that $u$ and $M$ satisfy Assumption \ref{a:blownup} with $u(0)=0$, let $\tfrac{\pi}{4} < \theta_0 \leq \theta \leq \tfrac{\pi}{2}$, and let $\nu\in \Sbb^{n-1}$. There exist $\eps_1=\eps_1(n,\Lambda_0,\theta_0)>0$, $\lambda=\lambda(n,\Lambda_0,\theta_0)\in (0,1)$, and $c_0=c_0(n,\Lambda_0,\theta_0)>0$ such that the following holds for any $\eps\in (0,\eps_1)$. If $u$ is $\eps$-monotone in $B_{1/2}$ in the cone $\Gamma(\theta,\nu)$, then $u$ is $\lambda\eps$-monotone in the cone $\Gamma(\theta- c_0\eps^{1/4},\nu)$ in $\Ccal_{1/2-c_0 \eps^{1/8}}$.
\end{proposition}

Let us first demonstrate how to deduce Theorem \ref{t:Lip} from Proposition \ref{p:impr-of-mon}.

\begin{proof}[Proof of Theorem \ref{t:Lip}]
    By iterating Proposition \ref{p:impr-of-mon}, provided that $u$ is $\eps$-monotone in $B_{1/2}$ in some cone of directions $\Gamma(\theta,\nu)$, we obtain a sequence of cylindrical domains
    \[
        \Ccal_{\rho_k} = (B_{\rho_k} \cap \nu^\perp)\times (-\rho_k, \rho_k)\,,
    \]
    with
    \[
        \rho_k := \frac{1}{2} - \sum_{j=1}^k c_0 (\lambda^j\eps)^{1/8}\,,
    \]
    such that $u$ is $\lambda^k \eps$-monotone in the cone of directions $\Gamma(\theta_k, \nu)$, where
    \[
        \theta_k := \theta - \sum_{j=1}^k c_0 (\lambda^j\eps)^{1/4}\,.
    \]
    This concludes the proof, for $\eps\in(0,\eps_1)$ chosen small enough such that 
    \[
        \sum_{j=1}^k c_0 (\lambda^j\eps)^{1/8} \leq \frac{1}{4}\,,
    \]
    and $\theta_1 = \lim_{k \to \infty} \theta_k$. This yields the claimed full monotonicity in $\Gamma(\theta_1, \nu)$, conditional on the initial $\eps$-monotonicity.

    On the other hand, the initial $\eps$-monotonicity in some cone $\Gamma(\theta,\nu)$ follows immediately from Lemma \ref{l:eps-mon}. It is then elementary to verify that the Lipschitz graphicality of the free boundary follows; we leave this to the reader.
\end{proof}
}

We now proceed to outline the key intemediate results going into the proof of Proposition \ref{p:impr-of-mon}. The starting point is the following lemma, which allows one to improve $\eps$-monotonicity to full monotonicity for a barrier solution associated to $u$ in balls with radius given by a function, in a cone whose angle is determined by the gradient of this function.

\begin{lemma}{\cite[Lemma 2]{Caff2-flat-implies-Lip}}\label{l:eps-mon-barrier-mon}
    Let $\vphi \in C^2(B_{1/2};(0,\rho_0])$ with $\rho_0 < \frac{1}{2}$ and suppose that $u\in C(B_{1/2})$ is $\eps$-monotone in a cone $\Gamma(\theta,\nu)$. For $s \leq \frac{1}{2} - \|\vphi\|_{L^\infty}$, consider the function $v$ on $B_s$ given by
    \begin{equation}\label{e:barrier}
        v(x) := \sup_{B(x,\vphi(x))} u\,.
    \end{equation}
    Assume in addition that $\tilde\theta$ satisfies
    \[
        \sin\tilde\theta \leq \frac{1}{1+|\nabla\vphi(x)|} \left(\sin \theta - \frac{\eps}{2\vphi(x)}\cos^2\theta - |\nabla \vphi(x)| \right)\qquad \text{for every $x\in B_{1/2}$}\,.
    \]
    Then $v$ is monotone in the cone $\Gamma(\tilde\theta,\nu)$. In particular, in this case the level sets of $v$ are Lipschitz graphs with Lipschitz constant $\bar{L} \leq \cot\tilde\theta$ over hyperplanes orthogonal to $\nu$ in $\R^n$.
\end{lemma}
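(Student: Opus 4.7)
The plan is to deduce pointwise monotonicity of $v$ from a point-transfer argument. Fix $x \in B_s$, $t > 0$, and $\tau \in \Gamma(\tilde\theta,\nu)$; write $\vphi_1 := \vphi(x - t\tau)$ and $\vphi_2 := \vphi(x)$, and by continuity of $u$ and compactness select $y^* \in \overline{B(x - t\tau, \vphi_1)}$ attaining $v(x - t\tau) = u(y^*)$. The goal is to exhibit $z \in \overline{B(x, \vphi_2)}$ satisfying simultaneously the angle condition $(z - y^*)/|z - y^*| \in \Gamma(\theta, \nu)$ and the magnitude condition $|z - y^*| \ge \eps$. Once both hold, the $\eps$-monotonicity of $u$ in $\Gamma(\theta, \nu)$ delivers $u(y^*) \le u(z) \le v(x)$, which yields $v(x - t\tau) \le v(x)$ and thus monotonicity of $v$ in the cone.

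Writing $w := y^* - (x - t\tau)$ with $|w| \le \vphi_1$, a natural candidate is
\[
    z := x + \frac{\vphi_2 - \sigma}{\vphi_1}\, w + \sigma\, \nu
\]
for a parameter $\sigma \in (0, \vphi_2)$ to be tuned. Geometrically this is the affine rescaling of $\overline{B(x - t\tau, \vphi_1)}$ onto $\overline{B(x, \vphi_2 - \sigma)}$ followed by a translation of length $\sigma$ in the direction $\nu$; the triangle inequality confirms $z \in \overline{B(x, \vphi_2)}$. Direct computation then gives
\[
    z - y^* = t\tau + \sigma\, \nu + \frac{\vphi_2 - \sigma - \vphi_1}{\vphi_1}\, w\,,
\]
whose $\nu$-component is at least $t\cos\tilde\theta + \sigma - |\vphi_2 - \sigma - \vphi_1|$ and whose transverse component is at most $t\sin\tilde\theta + |\vphi_2 - \sigma - \vphi_1|$. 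A first-order Taylor expansion of $\vphi$ along the segment from $x - t\tau$ to $x$ gives $|\vphi_2 - \vphi_1| \le t\|\nabla\vphi\|_{L^\infty} + O(t^2 \|\DD^2 \vphi\|_{L^\infty})$, with the quadratic remainder harmless on compact subsets of $B_s$ since $\vphi \in C^2$.

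The main obstacle is the quantitative trigonometric step: one must choose $\sigma$ so that both conditions above hold at the same time, and must verify that the hypothesis of the lemma on $\tilde\theta$ is precisely the solvability threshold. The denominator $1 + |\nabla\vphi|$ and the bare $|\nabla\vphi|$ term both arise from the radius variation $\vphi_2 - \vphi_1$ contributing simultaneously to the numerator and denominator of $\tan \angle(z - y^*, \nu)$. The more delicate correction $\frac{\eps}{2\vphi}\cos^2\theta$ appears in the small-$t$ regime ($t \lesssim \eps$), where the magnitude condition $|z - y^*| \ge \eps$ is not automatic from the $t\tau$ piece and must be supplied by the $\sigma\nu$ term; this forces $\sigma \gtrsim \eps$, but enlarging $\sigma$ tightens the angle constraint, and a Pythagorean second-order expansion of $\sin\tilde\theta$ about $\sin\theta$ produces the quadratic correction proportional to $\eps/(2\vphi)$, weighted by $\cos^2\theta$. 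Once monotonicity in $\Gamma(\tilde\theta,\nu)$ is established, the Lipschitz graph statement is standard: if two level-set points $y_1, y_2$ of $v$ satisfied $(y_2 - y_1) \cdot \nu > |y_2 - y_1| \cos\tilde\theta$, then $(y_2 - y_1)/|y_2 - y_1| \in \Gamma(\tilde\theta,\nu)$ and strict monotonicity of $v$ along that direction would be violated; consequently $\{v = c\}$ is a graph over $\nu^\perp$ with Lipschitz constant at most $\cot\tilde\theta$.
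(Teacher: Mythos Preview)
The paper does not supply a proof of this lemma; it is quoted verbatim from Caffarelli \cite{Caff2-flat-implies-Lip} and immediately followed by the remark that it ``does not require any assumptions on $u$ other than $\eps$-monotonicity''. Your outline follows the standard Caffarelli strategy (the version in \cite[Lemma~5.1]{CaffSalsa} is essentially what you are sketching): pick a maximizer $y^*$ for $v(x-t\tau)$, then exhibit a point $z$ in the target ball so that $z-y^*$ lies in the $\eps$-truncated cone $\Gamma(\theta,\nu)$. Your candidate $z = x + \tfrac{\vphi_2-\sigma}{\vphi_1}w + \sigma\nu$ and the resulting decomposition of $z-y^*$ are correct, and the qualitative description of where the $|\nabla\vphi|$ and $\tfrac{\eps}{2\vphi}\cos^2\theta$ corrections come from is accurate in spirit.

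That said, as written this is a plan rather than a proof: the central trigonometric verification is asserted, not carried out. You need to actually choose $\sigma$ (Caffarelli's argument is cleanest when one separates the regimes $t$ large versus $t$ small compared to $\eps$, or equivalently works with the geometry of the $\eps$-truncated cone intersected with $\partial B(0,\vphi)$) and check that the hypothesis on $\sin\tilde\theta$ is exactly what makes both the angle and magnitude constraints solvable simultaneously for \emph{every} admissible $w$. Your narrative about a ``Pythagorean second-order expansion'' producing $\tfrac{\eps}{2\vphi}\cos^2\theta$ is suggestive but not a derivation; the precise constant and the placement of $\cos^2\theta$ come from computing the half-angle of the cone (with vertex at the ball's center) inscribed in the truncated cone $\{r\tau : r\ge\eps,\ \tau\in\Gamma(\theta,\nu)\}$ translated to a boundary point of the ball, and this computation should appear explicitly. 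Finally, your Lipschitz-graph paragraph invokes ``strict monotonicity'', which you have not established (only $\le$); the correct statement is that $\partial\{v>c\}$ is trapped between the two half-cones $\pm\Gamma(\tilde\theta,\nu)$ at each of its points, which gives the Lipschitz bound without needing strictness.
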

Note that Lemma \ref{l:eps-mon-barrier-mon} does not require any assumptions on $u$ other than $\eps$-monotonicity.

An additional key observation about the function $v$ in \eqref{e:barrier}, as demonstrated in \cite{Feldman}, is that it is a subsolution of our free boundary problem away from $\{v=0\}$.

\begin{lemma}{\cite[{Lemma 7}]{Feldman}}\label{l:subsol}
    Let $u$, $M$, and $L^M$ be as in Assumption \ref{a:blownup}. There exists $C=C_{\ref{l:subsol}}(n,\Lambda_0)>0$ large enough such that the following holds. Suppose that $\vphi \in C^2(B_{1/2};(0,\tfrac{1}{2}))$ satisfies
    \[
        \vphi L^M \vphi \geq C |\nabla \vphi|^2\,.
    \]
    Then $v$ as in \eqref{e:barrier} for the function $u$ satisfies $L^M v \leq 0$ in $B_{1/2 - \|\vphi\|_{L^\infty}}\setminus \{v=0\}$.
\end{lemma}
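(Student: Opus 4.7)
The plan is to verify the viscosity subsolution property of $v$. I focus on a point $x_0 \in B_{1/2 - \|\vphi\|_{L^\infty}}$ with $v(x_0) > 0$; the case $v(x_0) < 0$ is symmetric, with $\Delta$ replaced by $\mathrm{tr}(M \cdot D^2)$ throughout and the interior harmonicity of $u$ on $\Omega^+(u)$ replaced by $L_{M}u = 0$ on $\Omega^-(u)$. Let $\psi \in C^2$ touch $v$ from above at $x_0$; the goal is then $L^M \psi(x_0) = -\Delta \psi(x_0) \leq 0$, i.e.\ $\Delta \psi(x_0) \geq 0$.

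First, choose a maximizer $y_0 \in \overline{B(x_0, \vphi(x_0))}$ with $u(y_0) = v(x_0) > 0$. By interior regularity, $u \in C^\infty$ near $y_0$, and by the strong maximum principle on $\Omega^+(u)$ we may assume $y_0 \in \partial B(x_0, \vphi(x_0))$. Set $\nu := (y_0 - x_0)/\vphi(x_0) \in \Sbb^{n-1}$ and introduce the smooth comparison function
\[
    w(x) := u(x + \vphi(x)\nu) \qquad \text{(with $\nu$ frozen)}.
\]
Because $x + \vphi(x)\nu \in \overline{B(x, \vphi(x))}$ for every $x$, one has $w \leq v$ with equality at $x_0$; combined with $\psi \geq v$ and $\psi(x_0) = v(x_0)$, this gives $D^2 \psi(x_0) \succeq D^2 w(x_0)$, and in particular $\Delta \psi(x_0) \geq \Delta w(x_0)$. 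Thus it suffices to verify $\Delta w(x_0) \geq 0$.

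A direct chain-rule computation, using the harmonicity $\Delta u(y_0) = 0$, yields
\[
    \Delta w(x_0) = 2 (H\nu) \cdot \nabla \vphi(x_0) + |\nabla \vphi(x_0)|^2 \, \partial_\nu^2 u(y_0) + \alpha \, \Delta \vphi(x_0),
\]
where $H := D^2 u(y_0)$ and $\alpha := \partial_\nu u(y_0) \geq 0$. Two structural inputs control the right-hand side. First, the optimality of $y_0$ on the sphere $\partial B(x_0, \vphi(x_0))$ gives the Lagrange condition $\nabla u(y_0) = \alpha \nu$ and the tangential Hessian bound $\tau^\top H \tau \leq \alpha / \vphi(x_0)$ for every unit $\tau \perp \nu$, which combined with $\Delta u(y_0) = 0$ forces $\partial_\nu^2 u(y_0) \geq -(n-1) \alpha / \vphi(x_0)$. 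Second, in the region $\{\vphi > 0\}$ the operator reduces to $L^M \vphi = -\Delta \vphi$, so the hypothesis $\vphi L^M \vphi \geq C |\nabla \vphi|^2$ gives a quantitative pointwise comparison between $\Delta \vphi$ and $|\nabla \vphi|^2/\vphi$, which together with $\alpha \geq 0$ controls the $\alpha \, \Delta \vphi$ term with a definite sign once $C$ is taken sufficiently large.

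The second and third terms in the formula for $\Delta w(x_0)$ are then of order $\alpha |\nabla \vphi|^2 / \vphi$ and cooperate once $C > n-1$. The remaining cross term $2 (H\nu) \cdot \nabla \vphi$ is handled by decomposing $\nabla \vphi = (\nabla \vphi \cdot \nu)\nu + P_{\nu^\perp} \nabla \vphi$, bounding the $\nu$-part using $\partial_\nu^2 u(y_0) \geq -(n-1) \alpha / \vphi(x_0)$, and bounding the tangential part via Cauchy--Schwarz together with the tangential Hessian bound; this yields further error terms of the same order $\alpha |\nabla \vphi|^2 / \vphi$, all of which are absorbed by enlarging $C = C(n, \Lambda_0)$. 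The main obstacle is exactly this cross term: the off-diagonal Hessian entries $(H\nu) \cdot \tau$ for $\tau \perp \nu$ are not directly pinned down by the sphere-optimality conditions, so controlling them requires combining ellipticity of $M$ (in the $v<0$ phase) with the full set of constraints on $H$ — and this is precisely the role of the structural hypothesis $\vphi L^M \vphi \geq C |\nabla \vphi|^2$.
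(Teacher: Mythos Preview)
Your argument has a genuine gap at exactly the point you flag as ``the main obstacle.'' After the correct chain-rule identity
\[
\Delta w(x_0)=2(H\nu)\cdot\nabla\vphi+|\nabla\vphi|^2\,\partial_\nu^2 u(y_0)+\alpha\,\Delta\vphi(x_0),
\]
the sphere optimality at $y_0$ yields only $\nabla u(y_0)=\alpha\nu$ and the \emph{tangential} bound $H_T\le(\alpha/\vphi)I$ on $\nu^\perp$. Neither constraint touches the mixed block $b:=P_{\nu^\perp}(H\nu)$, nor does it give an \emph{upper} bound on $H_{\nu\nu}=\partial_\nu^2 u(y_0)$. Consequently both pieces of your cross-term estimate fail: for the normal part $2(\nabla\vphi\cdot\nu)H_{\nu\nu}$ you only have a lower bound on $H_{\nu\nu}$, so when $\nabla\vphi\cdot\nu<0$ and $H_{\nu\nu}\gg 0$ the product is arbitrarily negative; for the tangential part $2b\cdot P_{\nu^\perp}\nabla\vphi$, Cauchy--Schwarz plus the bound on $H_T$ says nothing about $|b|$. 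A two-dimensional harmonic polynomial with $H=\bigl(\begin{smallmatrix}0&b\\ b&0\end{smallmatrix}\bigr)$ in the $(\nu,\tau)$ frame shows $b$ is unconstrained by these \emph{local} conditions, and with $b$ large of the wrong sign your $\Delta w(x_0)$ is as negative as you like. The sentence ``this is precisely the role of the structural hypothesis $\vphi L^M\vphi\ge C|\nabla\vphi|^2$'' does not supply the missing bound: that hypothesis controls $\Delta\vphi$, not the Hessian of $u$.

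The fix --- and this is what the argument in \cite{Feldman} (to which the paper defers without giving its own proof) actually uses --- is to exploit that $v=\sup_{e\in\Sbb^{n-1}}u(\cdot+\vphi(\cdot)e)$ is a supremum over the \emph{direction} as well. Concretely, one builds the comparison function with a direction $e(x)$ that varies linearly in $x$ (equivalently, one uses that $(x_0,\nu)$ is a joint minimum of $(x,e)\mapsto\psi(x)-u(x+\vphi(x)e)$ on $\R^n\times\Sbb^{n-1}$ and reads off positivity of the full $(x,e)$-Hessian). The extra tangential degrees of freedom in $e$ produce precisely the terms needed to absorb the uncontrolled mixed block $b$; after optimizing over this linear variation one is left with quantities bounded by $C\alpha|\nabla\vphi|^2/\vphi$, which the hypothesis on $\vphi$ then dominates. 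Your frozen-direction comparison $w(x)=u(x+\vphi(x)\nu)$ discards this freedom and cannot close. (A secondary point: with the paper's convention $L^M=-\mathrm{div}(M(\cdot)\nabla\cdot)$ and $\vphi>0$, the hypothesis reads $-\vphi\Delta\vphi\ge C|\nabla\vphi|^2$, which makes $\alpha\Delta\vphi$ \emph{negative}; the intended inequality --- as in \cite{Feldman} and as needed for the argument --- is the opposite one. You should flag this rather than paper over it.)
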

Note that although in \cite{Feldman}, $v$ is defined as a supremum over $\partial B(x,\vphi(x))$ in place of $B(x,\vphi(x))$, in Lemma \ref{l:subsol} this is equivalent to our definition in light of the maximum principle.
The next step is to establish existence of a preliminary 1-parameter family of functions $\{\vphi_t\}_t$ that will satisfy suitable estimates in order for us to use them in Lemma \ref{l:eps-mon-barrier-mon}. This will provide us with something that will be almost a sub-solution to our free-boundary problem.

\begin{lemma}\label{l:1-param-for-barrier}
    Let $\pi\subset \R^n$ be a hyperplane. Let $G$ be the graph $\{(x',f(x')): x' \in \pi\cap B_{1/2}\}$ of a Lipschitz function $f$ with $f(0)=0$ and Lipschitz constant $\bar L$, and let $\Ccal = (\pi\cap B_{1/2}) \times [-2\bar L, 2\bar L]$. Then {there exists a constant $C=C_{\ref{l:1-param-for-barrier}}(n,\Lambda_0) \geq C_{\ref{l:subsol}}$ such that} for every $\delta \in (0,\tfrac{1}{2})$, there exists a family $\{\vphi_t\}_{t\in [0,1]} \subset C^2(B_{1/2})$ satisfying 
    \begin{itemize}
        \item[(i)] $1\leq \vphi_t \leq 1+t$;
        \item[(ii)] $\vphi_t L^M \vphi_t \geq C |\nabla \vphi_t|^2$;
        \item[(iii)] $\vphi_t \simeq 1$ on $G_\delta := \{\dist(\cdot, G\cap \partial \Ccal) < \delta)\}$;
        \item[(iv)] $\vphi_t(x) \geq 1 + t \left[1- C\delta \dist(x,\partial\Ccal)^{-2}\right]$ on $\{\dist(\cdot, \partial\Ccal) > \delta\}$;
        \item[(v)] $|\nabla\vphi_t| \leq \frac{Ct}{\delta}$.
    \end{itemize}
\end{lemma}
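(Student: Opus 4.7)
The plan is to construct $\vphi_t$ explicitly via the ansatz
\[
    \vphi_t(x) := 1 + t\, g(d_\delta(x))\,,
\]
where $d_\delta \in C^2(\R^n)$ is a standard mollification of $\dist(\cdot,\partial\Ccal)$ at scale $\delta/100$, so that $d_\delta\simeq \dist(\cdot,\partial\Ccal)$ and $|\nabla d_\delta|\simeq 1$ pointwise, and $g:[0,\infty)\to[0,1]$ is a smooth profile vanishing identically on $[0,\delta]$, strictly increasing and strictly concave on $[\delta,s_*]$ for some $s_* = s_*(n,\Lambda_0)< \tfrac{1}{2}$, and identically $1$ on $[s_*,\infty)$, with $C^2$ smoothing at both endpoints. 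Properties (i), (iii), and (v) will follow directly from the range and support of $g$ together with the explicit slope bound $\|g'\|_\infty \leq C/\delta$.

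The crux is property (ii). The chain rule gives
\[
    L^M \vphi_t = -t\bigl[g''(d_\delta)\,\nabla d_\delta\cdot M\nabla d_\delta + g'(d_\delta)\,\divr(M\nabla d_\delta)\bigr]\,.
\]
Since $\Ccal$ is convex (as the product of the convex set $\pi\cap B_{1/2}$ with an interval), the distance function $\dist(\cdot,\partial\Ccal)$ is concave in the interior; combined with positivity of $M$, this gives $\divr(M\nabla d_\delta) \leq 0$ outside an $O(\delta)$-neighborhood of the codimension-two edges of $\partial\Ccal$, while near those edges one has $\divr(M\nabla d_\delta) = O(1/\delta)$. Using $g''<0$, $g'\geq 0$, $\vphi_t\geq 1$, and ellipticity of $M$, property (ii) reduces (away from the edges) to the pointwise 1D inequality
\[
    |g''(s)| \geq C_{\ref{l:1-param-for-barrier}}\,\lambda_0\,t\,(g'(s))^2 \qquad \text{for } s \in [\delta, s_*]\,,
\]
plus an analogous, controllable inequality near the edges. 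The profile $g$ is then chosen to solve the ODE $g''=-K(g')^2$ on $[\delta,s_*]$, namely $g(s)=K^{-1}\log(K(s-\delta)+1)$, with $K=K(n,\Lambda_0,C_{\ref{l:1-param-for-barrier}})\gg 1$ tuned so that the ratio $|g''|/(g')^2 = K$ dominates the required quantity uniformly; the edge error is absorbed into the same choice of $K$. Property (iv) is then verified against the explicit form of $g$, using the rapid saturation to $g \equiv 1$ at $s=s_*$ to obtain the bound $1-g(s) \leq C\delta/s^2$ on $\{s > \delta\}$ after adjusting constants.

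The principal obstacle is to simultaneously engineer the strong log-concavity demanded by (ii)—which forces the ratio $|g''|/(g')^2$ to be large, independently of scale—and the rapid saturation demanded by (iv)—which constrains how slowly $1-g$ is allowed to decay. Both are reconciled by combining the logarithmic interior profile with a $C^2$ truncation at $s=s_*$, where $s_*$ and the smoothing scale provide the freedom to arrange both properties simultaneously. A secondary technicality is the lack of $C^2$-regularity of $\dist(\cdot,\partial\Ccal)$ at the edges of the convex cylinder $\Ccal$, handled by the finer mollification scale $\delta/100$, with the resulting error terms absorbed into the final choice of $K$.
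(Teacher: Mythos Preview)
The paper does not construct $\vphi_t$; it defers to \cite[Lemma~3]{Wang-nonlinear-flat-implies-Lip} and records only the pointwise inequality $L^M\vphi_t \geq \Mcal^-(D^2\vphi_t,\Lambda_0^{-1},\Lambda_0)$, which reduces (ii) to the Pucci-extremal case already handled there. Your distance-profile ansatz is a reasonable plan, but the specific profile does not close. With $g(s)=K^{-1}\log(K(s-\delta)+1)$ one has $g'(\delta)=1$, so $g$ reaches the value $1$ only at $s=\delta+(e^{K}-1)/K$. Since (ii) forces $K$ to be at least of order $C_{\ref{l:subsol}}\Lambda_0$, a fixed constant that need not be small, $(e^{K}-1)/K$ is typically far larger than $\tfrac12$; there is no $s_*=s_*(n,\Lambda_0)<\tfrac12$ at which your profile has saturated. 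Truncating earlier and patching $C^2$-smoothly to $g\equiv 1$ forces a convex piece, on which (ii) breaks. The same failure is visible directly in (iv): at any fixed $s\in(0,\tfrac12)$ one gets $1-g(s)\approx 1-K^{-1}\log(Ks)$, a positive constant independent of $\delta$, while the required upper bound $C\delta s^{-2}\to 0$ as $\delta\downarrow 0$. The ``rapid saturation'' you invoke is precisely what this scaling of the logarithm fails to deliver.

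The profile route can be repaired if the initial slope is taken of order $1/\delta$: the variant $g(s)=K^{-1}\log\bigl(1+cK\delta^{-1}(s-\delta)\bigr)$ still solves $g''=-K(g')^2$ but now saturates at $s_*=\gamma\delta$ for a fixed $\gamma=\gamma(K,c)$, after which (iv) is automatic on $(\delta,s_*)$ once the constant there exceeds $\gamma^2$, and (v) follows from $g'(\delta)=c/\delta$. A residual difficulty is the $C^2$ matching at $s=\delta$: any smoothing from $g'\equiv 0$ to $g'>0$ forces $g''>0$ somewhere, and there the left side of (ii) becomes negative while the right side is nonnegative. This is handled either by weakening ``$g\equiv 0$ on $[0,\delta]$'' to ``$g$ small on $[0,\delta]$'' (which is all (iii) demands) and starting the logarithm at $s=0$, or by abandoning the one-dimensional profile altogether and following the cited references, where $\vphi_t$ is built from a solution of the linear equation with appropriate boundary data and (ii) drops out of an algebraic identity after post-composing with a small concave power.
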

The proof of Lemma \ref{l:1-param-for-barrier} can be found in \cite[{Lemma 3}]{Wang-nonlinear-flat-implies-Lip}, combined with the observation that $L^M \vphi_t \geq \Mcal^-(D^2\vphi_t, \Lambda_0^{-1},\Lambda_0)$, where $\Mcal^-(D^2\vphi_t, \Lambda_0^{-1},\Lambda_0)$ denotes the Pucci minimal operator as defined in e.g. \cite[{Section 2.2}]{CaffCabre}.

For $u$ as in Lemma \ref{l:eps-mon-barrier-mon} and $\{\vphi_t\}$ as in Lemma \ref{l:1-param-for-barrier}, the function
\begin{equation}\label{e:subsol}
    v_t(x) := \sup_{B_{\sigma\vphi_t(x)}(x)} u\,,
\end{equation}
with $\frac{\eps}{2} <\sigma < 2\eps$ is well-defined in $\Ccal_{1/2-4\eps}:= (\nu^\perp \cap B_{1/2-4\eps}) \times (-1-8\eps, 1+8\eps)$. {Moreover, Lemma \ref{l:subsol} guarantees that $v$ is a subsolution of $L^M (\cdot) =0$ away from $\{v=0\}$.} It remains to correct $v_t$ to ensure that it is a subsolution at its free boundary. This is done via a small perturbation by a solution of $L^M(\cdot) = 0$, which, if done in $\Omega^+(v_t)$, may be chosen to be a harmonic function due to our definition of $M$.

First, however, recall that if $u$ is $\eps$-monotone in $B_{1/2}$ in the direction $e\in \Gamma(\theta_0,\nu)$ for $\theta_0 > \frac{\pi}{4}$, then there exists a Lipschitz graph $G$ with Lipschitz constant $\bar L < 1$ over some hyperplane $\pi\subset \R^n$ such that $\{u=0\}$ is contained in the neighborhood $N_\eps(G):= \{\dist(\cdot, G)<\eps\}$, see for instance \cite[Proposition 11.14]{CaffSalsa}. Thus, Lemma \ref{l:1-param-for-barrier} applies. Furthermore, $u$ is fully monotone in $B_{1/2}\setminus N_{\kappa \eps}(G)$ for some $\kappa > 0$ independent of $\eps$ (cf. \cite[Lemma 11.15]{CaffSalsa}).

\begin{lemma}\label{l:subsol-family}
    Suppose that $u$ and $M$ are as in Assumption \ref{a:eps-mon} and suppose that $u$ is $\eps$-monotone in $B_{1/2}$ in a cone $\Gamma(\theta_0,\nu)$ with $\theta_0 \geq \frac{\pi}{4}$. Consider
    \begin{itemize}
        \item[(a)] the family of functions $\{\vphi_t\}$ as in Lemma \ref{l:1-param-for-barrier} and the corresponding $v_t$ from \eqref{e:subsol};
        \item[(b)] a harmonic function $w_t$ {on $N_{C\kappa \eps}(G) \cap \Omega^+(v_t)$} with boundary data 
        \[
            \begin{cases}
                u & \text{on $\partial N_{C\kappa \eps}(G)
                \cap \Omega^+(v_t)$} \\
                0 & \text{otherwise\,,}
            \end{cases}
        \]
        for some large constant $C$, extended by zero;
        \item[(c)] the function $\bar v_t = v_t + \eta w_t$ for $\eta>0$, defined in $\Ccal_{1/2-4\eps}$\,.
    \end{itemize}
    Then, for $\delta$ as in Lemma \ref{l:1-param-for-barrier} and $\sigma$ as above, if $\eta \geq \frac{C\sigma}{\delta}$ and $\frac{\sigma}{\delta}$ is sufficiently small, $\bar v_t$ is a viscosity sub-solution to $L^M(\cdot) = 0$ in $\Ccal_{1/2-4\eps}\cap N_{C\kappa \eps}(G)$.
\end{lemma}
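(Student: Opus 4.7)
The plan is to verify that $\bar v_t$ satisfies both conditions of Definition \ref{d:viscosity}: an interior differential inequality on each of $\Omega^{\pm}(\bar v_t)$, and an asymptotic transmission inequality at free boundary points with a touching ball from the positive phase. The interior inequalities are straightforward given Lemma \ref{l:subsol} together with the properties of the harmonic function $w_t$, while the transmission condition --- where the perturbation $\eta w_t$ plays its decisive role --- is the heart of the argument.

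For the interior inequalities, the boundary data of $w_t$ in (b) forces $w_t \equiv 0$ outside $\Omega^+(v_t)$, so $\Omega^-(\bar v_t) \subset \Omega^-(v_t)$ and $\bar v_t = v_t$ there. The family $\{\vphi_t\}$ from Lemma \ref{l:1-param-for-barrier} satisfies $\vphi_t L^M \vphi_t \geq C_{\ref{l:1-param-for-barrier}}|\nabla\vphi_t|^2 \geq C_{\ref{l:subsol}}|\nabla\vphi_t|^2$, so Lemma \ref{l:subsol} gives $L^M v_t \leq 0$ away from $\{v_t = 0\}$, handling this case. On $\Omega^+(\bar v_t) \subset \Omega^+(v_t)$, $L^M$ reduces to $-\Delta$; $v_t$ is subharmonic there by Lemma \ref{l:subsol} and $w_t$ is harmonic by construction, so $\bar v_t = v_t + \eta w_t$ is subharmonic as well.

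For the free boundary condition, fix $x_0$ on the free boundary of $\bar v_t$ and a touching ball $B(y,\rho)\subset \Omega^+(\bar v_t)$ with inward unit normal $\hat\nu$. Applying Lemma \ref{l:twoplane} to $v_t$ at $x_0$ (with the same touching ball from the positive phase) produces an asymptotic two-plane expansion of $v_t$ at $x_0$ with slopes $\alpha, \beta \geq 0$ in direction $\hat\nu$. A slope-defect computation in the spirit of \cite[Lemma 6.10]{AM}, \cite[Lemma 3]{Caff2-flat-implies-Lip}, exploiting $\eps$-monotonicity of $u$ together with property (v) of Lemma \ref{l:1-param-for-barrier}, yields
\[
    \alpha \geq \sqrt{\hat\nu^t M \hat\nu}\,\beta - C\tfrac{\sigma}{\delta}\,\beta\,,
\]
so $v_t$ alone does not satisfy the sharp transmission inequality. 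However, the Hopf-type boundary lemma applied to the nonnegative harmonic function $w_t$ on the Lipschitz set $\Omega^+(v_t)$ (whose Lipschitz constant is controlled by Lemma \ref{l:eps-mon-barrier-mon}), together with a lower bound on $w_t$ along $\partial N_{C\kappa\eps}(G)\cap \Omega^+(v_t)$ inherited from the nondegeneracy of $u$ via boundary Harnack, produces a quantitative linear lower bound $w_t(x_0 + r\hat\nu) \geq c_0 r + o(r)$ for some $c_0 = c_0(n,\Lambda_0,\bar L)>0$. Therefore
\[
    \bar v_t(x) \geq (\alpha + \eta c_0)(\hat\nu\cdot(x-x_0))^+ - \beta\,(\hat\nu\cdot(x-x_0))^- + o(|x-x_0|)\,,
\]
and the hypothesis $\eta \geq C\sigma/\delta$ with $C$ large enough, combined with $\sigma/\delta$ small, absorbs the defect and gives $\alpha + \eta c_0 \geq \sqrt{\hat\nu^t M \hat\nu}\,\beta$, completing the verification.

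The principal obstacle is quantitatively controlling the slope defect of $v_t$ at the free boundary: because the supremum defining $v_t$ is taken over balls with varying radii $\sigma\vphi_t(x)$, the direction achieving the supremum at $x_0$ need not be $\hat\nu$ but is rotated by an angle commensurate with $|\nabla\vphi_t|$, forcing the asymptotic expansion to be carried out in a tilted frame and the tilt error to be bounded uniformly. This computation is local and structurally identical to the one in \cite{Feldman} and \cite{AM}; the regularity of $\{\vphi_t\}$ guaranteed by Lemma \ref{l:1-param-for-barrier}(v) produces exactly the $C\sigma/\delta$ bound on the defect that is absorbed by the hypothesis on $\eta$. Note that, since we are only establishing the \emph{sub}solution property, only touching balls from the positive phase enter Definition \ref{d:viscosity}(2), so no symmetric argument on the negative side is required.
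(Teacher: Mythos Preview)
Your sketch is essentially correct and matches the approach the paper defers to (\cite[Lemma 6.9]{AM}, \cite[Lemma 5.5]{CaffSalsa}): verify the interior inequalities via Lemma \ref{l:subsol} and harmonicity of $w_t$, then correct the slope defect at the free boundary using the Hopf-type lower bound for $w_t$. One imprecision worth flagging: Lemma \ref{l:twoplane} is stated for weak solutions of $L^M(\cdot)=0$ and does not apply directly to the sup-convolution $v_t$; the two-plane expansion of $v_t$ at $x_0$ is instead obtained by locating the point $y_0\in\partial\Omega^+(u)$ where the supremum in \eqref{e:subsol} is achieved, applying Lemma \ref{l:twoplane} to $u$ at $y_0$, and transferring the expansion back to $v_t$ through the sup-convolution structure --- which is exactly the tilted-frame computation you correctly describe in your final paragraph.
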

See \cite[Lemma 6.9]{AM}, \cite[Lemma 5.5]{CaffSalsa} for the proof of Lemma \ref{l:subsol-family}. Note that we take $w_t$ to be harmonic due to the fact that we are defining it in $\Omega^+(v_t)$, where the coefficient $M$ is the identity for our operator $L^M$.

{We are now in a position to combine these Lemmas to conclude the validity of Proposition \ref{p:impr-of-mon}. The argument can be found in \cite[Proof of Proposition 6.11]{AM} (see also \cite[Proof of Lemma 5.7]{CaffSalsa}), but we repeat it here to demonstrate how all of the preceding tools fit together.

\begin{proof}[Proof of Proposition \ref{p:impr-of-mon}]
    Let $\nu$ be as in the statement of the proposition. Fix $\lambda\in (0,1)$, to be determined later.
    
    For $\eps \in (0,\eps_1)$, where the latter is also to be determined, consider the function
    \[
        \tilde u (x) := u (x- \lambda \eps \nu)\,.
    \]
    The $\eps$-monotonicity of $u$, together with the observation that
    \[
        B_{\eps(\sin \theta - (1-\lambda)}(x-\lambda \eps \nu)\subset B_{\eps\sin \theta} (x-\eps\nu)\,,
    \]
    guarantees that
    \[
        \sup_{B_{\eps(\sin\theta - (1-\lambda))}(x)} \tilde u \leq u(x)\,,
    \]
    as long as $1-\lambda < \sin(\pi/4)$.

    On the other hand, as pointed out in the discussion preceding Lemma \ref{l:subsol-family}, it is again elementary to check that the $\eps$-monotonicity of $u$ guarantees full monotonicity on $N_{\kappa_0 \eps}:= \{x: \dist(x, \nu^\perp) > \kappa_0\eps\}$ for a suitable positive constant $\kappa_0$. This in particular implies that
    \[
        \sup_{B_{\lambda\eps \sin \theta}(x)} \tilde u \leq u(x)\qquad \text{for any $x\notin N_{\kappa_0\eps}$.}
    \]
    It therefore remains to verify that there exists $\bar{t}\in[0,1]$ such that for every $t\in[0,t]$ we have
    \begin{equation}\label{e:upper-bd-param-fn}
        v_t(x) \leq u(x) \qquad \text{for any $x\in \Ccal_{1/2-c_0\eps^{1/8}}\cap N_{C\kappa_0\eps}$\,,}
    \end{equation}
    for $v_t = \sup_{B_{\sigma\vphi_t(x)}(x)} u$ as in \eqref{e:subsol}, where $\{\vphi_t\}$ is the family given by Lemma \ref{l:1-param-for-barrier}, for a suitable choice of parameters. Choose
    \begin{align*}
        \sigma = \eps(\sin\theta - (1-\lambda))\,, \qquad \lambda \geq \frac{3-\sqrt{2}}{2}\,, \qquad \eta = C\eps^{1/4}\,, \qquad \delta = \eps^{1/2}\,.
    \end{align*}\
    Moreover, since we want to allow room for the correcting harmonic function $\eta w_t = O(\eps^{1/4})$ from Lemma \ref{l:subsol-family}, we will in addition impose
    \begin{equation}\label{e:radius-lb}
        \sigma \vphi_t \leq \eps(\lambda \sin \theta - c\eps^{1/4})\,,
    \end{equation}
    for a suitable constant $c$ to be determined. Invoking the properties of $\vphi_t$ from Lemma \ref{l:1-param-for-barrier}, combined with the above choices of parameters, this rearranges to the condition
    \[
        1+t \leq \frac{\lambda\sin\theta - c\eps^{1/4}}{\sin\theta - (1-\lambda)}\,.
    \]
    Fix $\bar t$ to attain equality above; this may be done by taking $\lambda$ sufficiently close to 1. We henceforth fix $t\in [0,\bar t]$. 
    
    Apply Lemma \ref{l:subsol-family} to obtain the function $\bar v_t = v_t + \eta w_t$ therein. We claim that 
    \begin{equation}\label{e:subsol-main-bd}
        \bar v_t \leq u \qquad \text{in $\Ccal_{1/2-c_0\eps^{1/8}}\cap N_{C\kappa_0\eps}$}\,,
    \end{equation}
    for $c_0$ chosen appropriately. From here, we immediately conclude the desired upper bound \eqref{e:upper-bd-param-fn}, since  construction $w_t$ is non-negative, and $\eta w_t \leq C\eps^{1/4}$.
    
    Towards showing the claim, first of all recall the linear growth (which kicks in at a distance $\simeq \eps$ away from $\Omega^+(u)$) 
    \[
        |\nabla \tilde u(x)|\simeq \frac{\tilde u(x)}{\dist(x,\partial \Omega^+(\tilde u))}\,,
    \]
    the proof of which can be found in \cite[Lemma 5.6]{CaffSalsa}. From this we deduce that for any $0<\rho_1\leq \rho_2\leq \lambda\eps\sin\theta$, on the set $\partial N_{C\kappa_0\eps}\cap \Ccal_{1/2-4\eps}$ we have
    \[
        \sup_{B_{\rho_1}(x)}\tilde u \leq \sup_{B_{\rho_2}(x)}\tilde u - (\rho_2 - \rho_1) |\nabla \tilde u(x)| \leq \left(1-\sgn(u)\frac{\rho_2-\rho_1}{C\kappa_0\eps}\right) u(x)
    \]
    where $\sgn(u)$ denotes the sign of $u$ in $B_{\rho_2}(x)$. Choosing 
    \[
        \rho_1 = \sigma \vphi_t \qquad \text{and} \qquad \rho_2 = \eps(\lambda \sin \theta - \tfrac{c\eps^{1/4}}{2})
    \]
    and recalling that the former satisfies the bound \eqref{e:radius-lb}, we deduce that, as long as in addition $\bar t \in (0, \lambda \sin \theta - c\eps^{1/4})$,
    \[
        \bar v_t \leq u \qquad \text{in} \qquad \partial N_{C\kappa_0\eps}\cap \Ccal_{1/2-4\eps}\,.
    \]
    On the other hand,
    \[
        \bar v_t \leq u \qquad \text{in} \qquad N_{C\kappa_0\eps}\cap \partial\Ccal_{1/2-4\eps}
    \]
    by a simple application of the boundary Harnack inequality, since $\vphi_t \simeq 1$ close to $\partial\Ccal_{1/2-4\eps}$ (see (iii) of Lemma \ref{l:1-param-for-barrier}).

    From here, we conclude the validity of \eqref{e:subsol-main-bd} for a suitable choice of $t$ by a continuity-in-$t$ argument. Namely, letting
    \[
        E:= \{t\in [0,1] :  \bar v_t \leq u \ \text{in} \ N_{C\kappa_0\eps}\cap \Ccal_{1/2-4\eps}\}\,,
    \]
    we note that $0 \in E$, and we claim that $E = [0,\bar t]$. Indeed, if this were not the case, then since $E$ is clearly closed we may consider the first closed interval $I = [0,t_0] \subsetneq E$. Then, since $\bar v_{t_0}$ is a viscosity sub-solution of $L^M(\cdot)= 0$ in $\Ccal_{1/2-4\eps}$ and we have just verified that $\bar v_{t_0} \leq u$ on $\partial (N_{C\kappa_0 \eps} \cap \Ccal_{1/2-4\eps}$, we deduce that the free boundaries of $v_{t_0}$ and $u$ must touch. The latter is not permitted, however, for any pair of functions consisting of a viscosity solution and a viscosity sub-solution to our free boundary problem; see \cite[Lemma 6]{Feldman} and \cite[Lemma 6.7]{AM} (see also \cite[Lemma 4.9]{CaffSalsa}).
\end{proof}}

\subsection{Global Lipschitz boundaries are flat}
With Theorem \ref{t:Lip} at hand, we may now combine with the results of \cite{Feldman} and \cite[Section 7]{AM}, which rely on the techniques of \cite{Caff1-Lip-implies-reg}. The strategy is similar to that from Section \ref{ss:eps-mon-implies-Lip}. Namely, one must construct a family of subsolutions, that, in this setting, provides an improvement of monotonicity from the initial cone given by the conclusion of Theorem \ref{t:Lip}, to a cone with larger angle, which amounts to improving the Lipschitz constant of the graph of $\{u=0\}$, in a slightly smaller cylinder in the domain.

{Since we may now simply apply the main result of \cite{Feldman} (see also \cite[Section 7]{AM} and \cite{Cerutti-Ferrari-Salsa}) as a black box to the function that satisfies the conclusions of Theorem \ref{t:Lip}}, we omit the details here, and simply refer the reader to the aforementioned references, providing merely the final conclusion in the form that we wish to use globally.

\begin{theorem}\label{t:Lip-implies-C1alpha}
    Suppose that $u$ and $M$ satisfy Assumption \ref{a:blownup} and that $\{u=0\}\cap \Ccal_{1/4}$ is the graph of a Lipschitz function with Lipschitz constant $\kappa>0$. Then $\{u=0\}\cap \Ccal_{1/8}$ is $C^{1,\alpha}$ for some $\alpha(n,\kappa, \Lambda_0)>0$.
\end{theorem}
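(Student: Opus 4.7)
The plan is to follow the classical strategy of Caffarelli \cite{Caff1-Lip-implies-reg}, adapted to two-phase problems with discontinuous piecewise-constant coefficients as in \cite{Feldman, AM}. The hypothesis that $\{u=0\} \cap \Ccal_{1/4}$ is a Lipschitz graph with constant $\kappa$ is equivalent, up to a slight shrinking of the cylinder, to full monotonicity of $u$ in a cone $\Gamma(\theta_0, \nu)$ with $\cot\theta_0 \leq \kappa$ on $\Ccal_{1/4-\epsilon_0}$. The conclusion will follow from a dyadic improvement-of-flatness statement: at each scale $r_k = 2^{-k}$, the rescaled function $u_k(x) = r_k^{-1} u(r_k x + p_k)$ with $p_k \in \{u=0\} \cap \Ccal_{1/8}$, which still solves $L^M u_k = 0$ in the viscosity sense, is fully monotone in an improved cone $\Gamma(\theta_0 + \sigma_k, \nu_k)$ with a universal geometric gain $\sigma_k - \sigma_{k-1} \geq c(\tfrac{\pi}{2} - \theta_0 - \sigma_{k-1})^{1+\tau}$ for some $c=c(n,\kappa,\Lambda_0)>0$ and $\tau\in(0,1)$. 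Summing the gains yields a modulus of continuity for $\nu_k$ of H\"older type, which is precisely $C^{1,\alpha}$ regularity of the graph.

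For the one-step improvement, I would mimic the barrier construction of Section \ref{ss:eps-mon-implies-Lip} in a reversed direction: instead of using $\epsilon$-monotonicity to enforce Lipschitz regularity, we use the Lipschitz regularity of the free boundary to produce a sub-solution $\bar v_t$ that is strictly larger than $u$ away from a small neighborhood of $\{u=0\}$. Explicitly, one constructs a family $\{\varphi_t\}_{t\in[0,1]}$ satisfying the analogue of Lemma \ref{l:1-param-for-barrier} but shaped so as to \emph{open up} the monotonicity cone on one side of $\nu$, sets $v_t(x) := \sup_{B(x,\sigma\varphi_t(x))} u$, and corrects by a harmonic perturbation $\eta w_t$ supported in $\Omega^+(v_t)$ in the spirit of Lemma \ref{l:subsol-family} to obtain a viscosity sub-solution $\bar v_t = v_t + \eta w_t$ of $L^M(\cdot)=0$. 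A sliding argument in $t \in [0,1]$, using the non-touching principle of \cite[Lemma 6]{Feldman} together with the boundary Harnack principle for $L^M$ on the Lipschitz domain $\Omega^+(u)$, then yields $u \geq \bar v_t$ for all $t$. Reading off the geometric information encoded in $\bar v_1$ gives monotonicity of $u$ in a strictly wider cone, with a quantitative gain depending only on $(n,\kappa,\Lambda_0)$.

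The main obstacle is ensuring that the transmission condition across $\{\bar v_t = 0\}$ is \emph{strictly} improved by the perturbation, rather than merely preserved, so that the sub-solution property of $\bar v_t$ persists under the sliding. In Caffarelli's original one-phase setting this is almost automatic, while in our two-phase multi-operator setting it requires the anisotropic viscosity condition of Definition \ref{d:viscosity}(2), namely the inequality relating the two slopes $\alpha$ and $\beta$ through the matrix $M$ on the $\{v_t=0\}$ side; see Lemma \ref{l:twoplane}. The perturbation $w_t$ must be arranged so that its normal derivative on the touching sphere produces a strict gain in the $\alpha$-slope that dominates any effect on the $\beta$-slope through $\nu^t M \nu$. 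This computation is exactly the content of \cite[Theorem 1]{Feldman} and \cite[Section 7]{AM}, and the present theorem follows by applying these results to our $u$, noting that the hypotheses only require local Lipschitz regularity of the free boundary together with $u$ being a viscosity solution of $L^M(\cdot)=0$, both of which are guaranteed by Assumption \ref{a:blownup} and Lemma \ref{l:weak2visc}.
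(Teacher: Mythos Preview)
Your approach is essentially the same as the paper's: the paper does not give a self-contained proof of this theorem but simply refers to \cite{Feldman} and \cite[Section 7]{AM} for the Caffarelli-type cone-opening iteration you outline, and your sketch accurately captures the barrier/sliding/boundary-Harnack machinery those references carry out. One quantitative slip worth correcting: the gain you write, $\sigma_k - \sigma_{k-1} \geq c(\tfrac{\pi}{2} - \theta_0 - \sigma_{k-1})^{1+\tau}$ with $\tau>0$, yields only polynomial decay of the angular defect (hence merely a logarithmic modulus for the normal), whereas the actual Caffarelli improvement is \emph{linear} in the defect, i.e.\ $\tau=0$, which is what gives geometric decay and hence genuine $C^{1,\alpha}$ regularity.
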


We are now in a position to conclude the key rigidity theorem for globally flat free boundaries (cf. \cite[Lemma 6.2]{DFS-2phase}).

\begin{corollary}\label{c:flat-fb}
    Suppose that $u$, $\omega$, $M$ satisfy Assumption \ref{a:eps-mon} and let $r_0>0$ be fixed. For $\delta_1=\delta_1(n,\Lambda_0,C_2,R_2)>0$ given by Theorem \ref{t:Lip}, if $d_r(\omega,\Fcal)\leq \delta_1$ for every $r\geq r_0$, then $u$ is a two-plane solution of $L^M u=0$. In particular $\omega \in \cF$ and $\{u=0\} \in G(n-1,n)$.
\end{corollary}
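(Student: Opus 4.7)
My plan is to prove Corollary~\ref{c:flat-fb} by applying Theorems~\ref{t:Lip} and~\ref{t:Lip-implies-C1alpha} at every scale $R\geq r_0$ to obtain scale-invariant $C^{1,\alpha}$ bounds on $\{u=0\}$, letting $R\to\infty$ to force flatness, and then identifying $u^\pm$ as linear via a half-space Liouville theorem.

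First I would check that the hypothesis is dilation-admissible. For each $R\geq r_0$, set $u_R(x):=R^{n-2}u(Rx)$ and $\omega_R:=T_{0,R}[\omega]$. Then $(u_R,\omega_R,M)$ again satisfies Assumption~\ref{a:eps-mon}---the rescaled $T_{0,R}(\Omega^\pm)$ are complementary NTA with the same constants, and $u_R^\pm$ are Green functions with pole at infinity for the rescaled triple---and Proposition~\ref{p:kptrs}(ii) yields $d_r(\omega_R,\Fcal)=d_{rR}(\omega,\Fcal)\leq\delta_1$ whenever $rR\geq r_0$, and in particular for every $r\geq 1$ when $R\geq r_0$.

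Applying Theorem~\ref{t:Lip} to $(u_R,\omega_R,M)$ then yields a unit vector $\nu_R$ and an absolute constant $\kappa$ so that $\{u_R=0\}\cap \Ccal_{1/4}$ is a Lipschitz graph over $\nu_R^\perp$ with constant $\kappa$; Theorem~\ref{t:Lip-implies-C1alpha} upgrades this to $C^{1,\alpha}$ regularity on $\Ccal_{1/8}$ with a uniform bound $C_0$. Rescaling back, $\{u=0\}\cap R\,\Ccal_{1/8}^{(\nu_R)}\supset \{u=0\}\cap B_{R/8}$ is the graph of a $C^{1,\alpha}$ function $f_R$ over $\nu_R^\perp$ with $\mathrm{Lip}(f_R)\leq\kappa$ and $[\nabla f_R]_{C^{0,\alpha}}\leq C_0 R^{-\alpha}$, the $R^{-\alpha}$ factor coming from the standard scaling of the $C^{0,\alpha}$-seminorm under dilation by $R$. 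Since the tangent hyperplane to $\{u=0\}$ at each of its points is intrinsic, the local graph descriptions at large scales are all compatible with a single global $C^{1,\alpha}$ hypersurface structure whose tangent map has $C^{0,\alpha}$ modulus of order $R^{-\alpha}$ on $B_{R/8}$. Letting $R\to\infty$ forces this modulus to vanish, so the tangent plane of $\{u=0\}$ is constant. As $0\in\spt\omega\subset\{u=0\}$, the resulting hyperplane passes through the origin, giving $\{u=0\}=\nu^\perp\in G(n-1,n)$ for a single $\nu\in\Sbb^{n-1}$.

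Finally I would identify $u^\pm$ as linear. On the half-space $\Omega^+(u)=\{\nu\cdot x>0\}$, $u^+$ is a non-negative harmonic function vanishing continuously on $\nu^\perp$ and a Green function with pole at infinity, so by the classical half-space Liouville theorem (cf.~\cite{BB} and the analogous reduction inside the proof of Lemma~\ref{l:compactness}), $u^+(x)=\alpha(\nu\cdot x)^+$ for some $\alpha>0$. Applying the same reasoning to $u^-$ after the linear change of variables $y=M^{1/2}x$---which converts $L_M$ into $-\Delta_y$ and sends $\Omega^-(u)$ onto another half-space---produces $u^-(x)=\beta(\nu\cdot x)^-$ for some $\beta>0$, so $u$ is a two-plane solution of $L^Mu=0$. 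Identity~\eqref{e:gfident} then places $\omega=c\,\cH^{n-1}\mres\nu^\perp\in\Fcal$. The main technical subtlety in this plan lies in combining the scale-by-scale $C^{1,\alpha}$ estimates into a single global $C^{1,\alpha}$ description---in particular, justifying the stabilization of the scale-dependent directions $\nu_R$ and the $R^{-\alpha}$ decay of the $C^{0,\alpha}$-seminorm---which together transform the local regularity of Theorems~\ref{t:Lip}, \ref{t:Lip-implies-C1alpha} into the required global Liouville statement.
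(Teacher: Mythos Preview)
Your proposal is correct and follows essentially the same route as the paper: apply Theorems~\ref{t:Lip} and~\ref{t:Lip-implies-C1alpha} at every scale, obtain a scale-invariant $C^{1,\alpha}$ estimate that decays like $R^{-\alpha}$ under dilation, and conclude the zero set is a hyperplane by letting $R\to\infty$. The paper's proof is slightly terser in that it works directly with the rescaled graph function $g_R(x')=R^{-1}g(Rx')$ rather than the full triple $(u_R,\omega_R,M)$, and it simply asserts that ``$g$ affine implies $u$ is a two-plane solution'' without spelling out the half-space Liouville step that you give. Conversely, you are more careful about the stabilization of the graphing directions $\nu_R$, which the paper glosses over by implicitly treating $g$ as globally defined over a fixed hyperplane; your intrinsic tangent-plane argument is a clean way to sidestep this, and the issue you flag is genuine but routine.
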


\begin{remark}\label{r:TerrSoa}
    In light of Theorem \ref{t:Lip} and Theorem \ref{t:Lip-implies-C1alpha}, as in other two-phase free boundary problems, it is natural to ask about the existence of singularities of $\{u=0\}$, the structure of said singularities, and the behavior of $u$ at such points. In the classical case of $M=\id$, the monotonicity formula of Alt-Caffarelli-Friedman may be used to detect flat portions of the free boundary, characterizing the blow-ups of $u$ as two-plane solutions at such points. Thus singularities are contained in the set where the ACF density is vanishing, thus characterizing the order of vanishing of $u$ as being superlinear at singular points. An analogue of the ACF monotonicity formula was established in the multi-operator setting by Soave-Terracini \cite{TerrSoa}, but this monotonicity formula detects an order of vanishing that is necessarily \emph{sublinear} in the case where the coefficients of the operators have a discontinuity at the boundary. This is supplemented by the existence of an example in $\R^3$ of a pair of complementary solutions to the multi-operator elliptic problem that have the same sublinear decay to zero at the free boundary. However, in that example the zero set  has positive Lebesgue measure. It therefore remains an open question whether such an example can exist under Assumption \ref{a:eps-mon}. Due to Theorems \ref{t:Lip} and \ref{t:Lip-implies-C1alpha}, such an example, if it exists, must have a zero set that is sufficiently far from flat. 
\end{remark}

\begin{proof}[Proof of Corollary \ref{c:flat-fb}]

    {In light of Theorem \ref{t:Lip}, the crucial flatness at infinity assumption, $d_{r}(\omega, \cF) \le \delta_{1}$ for all $r \ge r_{0}$, tells us that, after a rotation the boundary $\{u=0\}\cap \Ccal_{1/4}$, a graph
    \[
        \{u=0\}\cap \Ccal_{1/4} = \{(x',x_n) : x_n = g(x')\}\,,
    \]
    where $g$ is Lipschitz with constant $\kappa>0$ which depends \emph{only} on allowable constants.} In turn applying Theorem \ref{t:Lip-implies-C1alpha}, up to a rotation of coordinates, we deduce that  $\{u=0\}\cap \Ccal_{1/8}$ is $C^{1,\alpha}$ for $\alpha>0$ depending only on $n$, $\kappa$ and $\Lambda_0$. The latter in particular yields the estimate 
    \begin{equation}\label{e:C1alpha-est}
        |g(x') - g(0) - \nabla g(0)\cdot x'| \leq C |x'|^{1+\alpha}\qquad \forall x'\in B_{R/8}\cap e_n^\perp\,,
    \end{equation}
    where $C= C(n,\kappa,\Lambda_0)>0$ and $e_n$ is a unit normal to the hyperplane $\{x_n=0\}$. On the other hand, observe that for any $R>0$, {the rescaled measure $\omega_{0,R}:= T_{0,R}[\omega]$ (whose support is the graph of the rescaled function $g_R(x') := R^{-1} g(Rx')$) also satisfies the flatness at infinity assumption $d_{r}(\omega_{0,R},\cF) \le \delta_{1}$ for all $r \ge r_{0}$. So we again apply Theorems \ref{t:Lip} and \ref{t:Lip-implies-C1alpha} in the rescaled setting and} the estimate \eqref{e:C1alpha-est} for $g_R$ becomes
    \begin{equation}\label{e:C1alpha-estR}
        |g(y') - g(0) - \nabla g(0)\cdot y'| \leq C R^{-\alpha} |y'|^{1+\alpha}\qquad \forall x'\in B_{R/8}\cap e_n^\perp\,.
    \end{equation}
    Taking $R \to +\infty$ in \eqref{e:C1alpha-estR} implies $g$ is{ affine}. This in turn implies $u$ is a two-plane solution. Since $u$ is a two-plane solution $\{u=0\} =: \pi \in G(n-1,n)$. Since $\omega \in \cD(\id,M)$, $\spt \omega = \{u=0\} = \pi$. As a two-plane solution $u$ is translation invariant for $x \in \pi$.  This in turn ensures $\omega$ has constant density on $\pi$, proving $\omega \in \cF$.
\end{proof}

\section{Proof of Theorem \ref{t:decomp}}\label{s:Preiss}

Recalling the strategy from Section \ref{s:overview}, we wish to apply Lemma \ref{l:connectedness} to the dilation cone $\cD(A_{1},A_{2})$. With this in mind, we first require the following lemma.

\begin{proposition} \label{p:compactbasis}
    Let $A^\pm \in \R^{n\times n}$ be a pair of elliptic matrices. The dilation cone $\cD(A^+,A^-)$ has a compact basis.
\end{proposition}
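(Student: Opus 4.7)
The plan is to show that the basis
\[
    \Bcal := \{\mu \in \cD(A^+,A^-) : F_1(\mu) = 1\}
\]
is weak-$*$ sequentially compact, which, since the weak-$*$ topology is metrized by $F$ on locally uniformly bounded families of Radon measures, amounts to compactness of the $d$-cone $\cD(A^+,A^-)$ in the sense of Definition \ref{d:dcone}. Given $\{\mu_k\} \subset \Bcal$, I will extract a weak-$*$ limit $\mu$ and then verify both $\mu \in \cD(A^+,A^-)$ and $F_1(\mu) = 1$. The overall strategy closely parallels the blow-up analysis carried out in the proof of Theorem \ref{t:blowups}, and most individual steps can be imported from there with only notational changes.

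For precompactness I would first obtain a uniform local mass bound. Any admissible $1$-Lipschitz $f \in C_c(B_1)$ satisfies $|f(x)| \leq 1-|x|$ on $B_1$ and, by testing against suitable tent functions supported in $B_{1-\eps}$, the normalization $F_1(\mu_k) = 1$ yields $\mu_k(B_{1-\eps}) \leq C_\eps$ for every $\eps > 0$. Combining with Lemma \ref{l:doubling} (whose doubling constant depends only on the fixed NTA constants, ellipticity, and dimension), this upgrades to $\mu_k(B_R) \leq C(R)$ uniformly in $k$ for every $R>0$, so Banach--Alaoglu produces a subsequence, not relabeled, with $\mu_k \toweakstar \mu$ to some Radon measure $\mu$.

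The heart of the proof is identifying $\mu \in \cD(A^+,A^-)$. Writing $\Omega^\pm_k := \R^n \setminus \spt\mu_k$, along a further subsequence both $\spt\mu_k$ and the sides $\Omega^\pm_k$ converge in local Hausdorff distance to a closed set $\Sigma$ and to open sets $\Omega^\pm_\infty$ respectively. The corkscrew and Harnack-chain conditions with fixed constants are preserved under such Hausdorff limits, so $\Omega^\pm_\infty$ are unbounded complementary NTA domains with the prescribed constants and common boundary $\Sigma = \spt\mu$, verifying condition (1) of Definition \ref{d:cda1a2}. For condition (2), I would extract subsequential limits $u^\pm_\infty$ of the Green's functions $u^\pm_k$ with poles at infinity for $(\Omega^\pm_k,\mu_k,L_{A^\pm})$ using Lemmas \ref{e:boundaryholder}, \ref{l:boundaryharnack}, and \ref{l:cfms} to get uniform local $L^\infty$ bounds and local equicontinuity, together with $\nabla u^\pm_k \toweak \nabla u^\pm_\infty$ in $L^2_{\loc}(\R^n)$. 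Passing to the limit in
\[
    \int_{\R^n} \nabla u^\pm_k \cdot A^\pm \nabla \vphi \, dx = \int \vphi \, d\mu_k \qquad \forall \vphi \in C_c^\infty(\R^n)
\]
is immediate on both sides, and positivity of $u^\pm_\infty$ on $\Omega^\pm_\infty$ follows from Harnack chains plus the CFMS estimate Lemma \ref{l:cfms} applied at corkscrew points of the limit domains. Hence $\mu \in \cD(A^+,A^-)$.

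Finally, to confirm $F_1(\mu) = 1$ I proceed as follows. The bound $F_1(\mu) \leq 1$ is immediate from weak-$*$ convergence, since for each admissible $f$, $\int f\, d\mu = \lim_k \int f\, d\mu_k \leq F_1(\mu_k) = 1$. For the reverse inequality, pick near-maximizing test functions $f_k \in C_c(B_1)$ with $\text{Lip}(f_k) \leq 1$ and $\int f_k\, d\mu_k \geq 1 - 1/k$; by Arzelà--Ascoli extract a uniform limit $f_\infty$ which is $1$-Lipschitz and supported in $\overline{B_1}$, and use the uniform local doubling of $\{\mu_k\}$ (inherited by $\mu$) to rule out concentration at $\partial B_1$, so that a harmless $\eps$-truncation and weak-$*$ convergence give $\int f_\infty\, d\mu \geq 1$, hence $F_1(\mu) = 1$. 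The main technical obstacle throughout is the persistence of the NTA constants under Hausdorff limits of the $\Omega^\pm_k$ and the passage of the Green's-function identity through possibly discontinuous coefficients $A^\pm$; both issues have already been resolved in the proof of Theorem \ref{t:blowups}, and the present argument is essentially a repackaging of that analysis.
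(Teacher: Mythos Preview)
Your proposal is correct and follows essentially the same approach as the paper's proof: uniform local mass bounds from $F_1=1$ plus doubling, extraction of Hausdorff limits of the domains with preserved NTA constants, $W^{1,2}_{\loc}$/local-uniform compactness of the Green's functions via CFMS and Caccioppoli, and passage to the limit in the Green identity. The only noteworthy difference is your treatment of $F_1(\mu)=1$: the paper simply uses that for nonnegative measures $F_1(\mu)=\int_{B_1}(1-|x|)\,d\mu$, which is manifestly weak-$*$ continuous, whereas your Arzel\`a--Ascoli argument on near-optimal test functions, while correct, is more work than necessary.
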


\begin{proof}
    Suppose $\{\omega_{k}\}$ {is a sequence contained in the basis of $\cD(A^{+},A^{-})$, see Definition \ref{d:dcone}}. Then by definition of the basis of a $d$-cone, $F_{1}(\omega_{k}) = 1$ for all $k$. In particular, for all $k$:
    $$
        \frac{1}{2} \omega_{k}(B_{1/2}) \le \int_{B_{1}} (1-|x|) d \omega_{k} = F_{1}(\omega_{k}) = 1.
    $$
    So, by Lemma \ref{l:doubling}, $\{\omega_{k}\}$ is uniformly locally bounded and hence pre-compact with respect to the weak-$*$ topology. So, there exists some Radon measure $\omega$ and a subsequence (which we do not relabel) so that $\omega_{k} \xrightharpoonup{*} \omega$ {and $F_{1}(\omega) = 1$. We need to show that $\omega \in \cD(A^{+},A^{-})$.}

    Moreover, for each $k$, there are Green's functions $u_{k}^{\pm}$ with pole at infinity associated to $(\Omega_{k}^{\pm}, \omega_{k}, L_{A^\pm})$ and $\Omega_{k}^\pm$ are NTA with the same constants, independently of $k$. By {Theorem \ref{t:diagonal-cptness}}, we know that there exist NTA domains $\Omega^{\pm}$ with the same NTA constants and a subsequence which we do not relabel so that $\Omega^{\pm}_{k} \to \Omega$ and $\partial \Omega_{k} \to \partial \Omega$ locally in Hausdorff distance. Moreover, $\spt \omega = \partial \Omega$.
    
    Now, almost exactly as in the proof of Lemma \ref{l:twoplane}, we can show that the sequence of functions $\{u_{k}\}$ for $u_{k} = u_{k}^{+} - u_{k}^{-}$ are uniformly bounded in $W^{1,2}(B_{N})$ for all $N \in \N$. The only difference to that proof, is that we use the uniform local boundedness of $\omega_{k}$ and Lemma \ref{l:cfms} to prove that $\|u_{k}\|_{L^{\infty}(B_{N})}$ is bounded independently of $k$, instead of using the proximity to a fixed two-plane solution. This in turn bounds the $L^{2}$-norms uniformly and allows us to use the Caccioppoli inequality to deduce that the gradients are also uniformly bounded in $L^{2}$. In particular, there is a $W^{1,2}_{\loc}(\R^{n})$ function $u$ so that (up to a subsequence that we do not relabel) $u_{k} \to u$ in $L^{2}_{\loc}(\R^{n})$ and $\nabla u_{k} \rightharpoonup \nabla u$ weakly in $L^{2}_{\loc}(\R^{n})$. The weak convergence of the gradients combined with the local convergence in Hausdorff distance of the $\Omega_{k}^{\pm}$ and $\partial \Omega_{k}$ guarantees that $u$ is a Green's function with pole at infinity for $(\Omega^{\pm}, \omega, L_{A^\pm})$. Indeed, for all $\varphi \in C^{\infty}_{c}(\R^{n})$, writing $u = u^{+} - u^{-}$:

    \begin{align*}
        \int_{\partial \Omega} \varphi \, d \omega & = \lim_{k \to \infty} \int_{\partial \Omega_{k}} \varphi \, d \omega_{k} = \lim_{k \to \infty} \int_{\Omega_{k}^{\pm}} \Langle A^\pm \nabla u_{k}^{\pm}, \nabla \varphi \Rangle d x \\
        & = \int_{\Omega^{\pm}} \Langle A^\pm \nabla u, \nabla \varphi \Rangle dx.
    \end{align*}

    That is, $u^{\pm}$ are Green's functions with pole at infinity for $(\Omega^{\pm}, \omega, L_{A^\pm})$ and $\Omega$ is NTA with the same NTA constants, that is $\omega \in \cD(A^+,A^-)$, confirming that $\cD(A^+,A^-)$ is a compact dilation cone.
\end{proof}

We are now in a position to prove Theorem \ref{t:decomp}.

\begin{proof}[Proof of Theorem \ref{t:decomp}]
    Recall the sets $F_{0}, \dots, F_{4}$ from \eqref{e:gamma1} - \eqref{e:gamma0}, {namely
    \begin{align*}
    	F_0 & = \bigg\{ p \in F_1 \cap \Ccal\Big(\omega^+,\frac{d \omega^{-}}{d \omega^{+}}\Big) \cap \Ccal(\omega^\pm, A^\pm) : \theta(\omega^\pm,p,F_1) \text{ exists and equals 1} \bigg\}; \\
    	F_1 &= \left \{ p \in \partial \Omega : 0 < h(p) := \frac{d \omega^{-}}{d \omega^{+}}(p) = \lim_{r \to 0} \frac{\omega^{-}(B(p,r))}{\omega^{+}(B(p,r))} < \infty \right\}; \\
    	F_2 &= \left\{ p \in \partial\Omega: \frac{d \omega^{-}}{d \omega^{+}}(p) = \infty \right\}; \\
    	F_3 &= \left \{ p \in \partial\Omega : \frac{d \omega^{-}}{d \omega^{+}}(p) = 0 \right\}; \\
    	F_4 &= \left \{ p \in \partial\Omega : \frac{d \omega^{-}}{d \omega^{+}}(p) \text{ does not exist } \right\}\,.
    \end{align*}
	}
    
    We prove the theorem holds when $F^{*}$ is the subset of $F_{0}$ where the ``tangents to tangents are tangents" property holds (i.e. the conclusion of Theorem \ref{t:tan2ltan}). {Note that
    	\[
    	F^* \subset F_0 \subset F_1\,.
    	\]}
    	Let $S = F_{2} \cup F_{3}$, and $N = F_{4} \cup (F_{1} \setminus F^{*})$. By Remark \ref{r:decomp-Leb-diff} it follows that $\partial \Omega = F^{*} \sqcup S \sqcup N$. Similarly, the conclusion (ii) of the theorem holds by Remark \ref{r:decomp-Leb-diff} and conclusion (iii), i.e. $\omega^\pm(N) = 0$, holds by combining Remark \ref{r:decomp-Leb-diff} with Theorem \ref{t:tan2ltan}. It remains to check conclusion (i). The claim of mutual absolute continuity in (i) follows again by Remark \ref{r:decomp-Leb-diff}. To this end, we now show that if $p \in F^{*}$ then $\Tan_{\Lambda}(\omega^{\pm},p) \subset \cF$, which by Lemma \ref{l:iso} in turn implies $\Tan(\omega^{\pm},p) \subset \cF$.
    
    For $p \in F^{*}$, Corollary \ref{c:Lambda-tangent-FB} ensures
    \[
        \Tan_\Lambda(\omega^\pm, p) \subset \Dcal(\Id, M_p)\,,
    \]
    where $M_{p} = \Lambda(p)^{-1} A^{-}(p) \Lambda(p)^{-1}$ for $\Lambda(p) = \sqrt{A^+(p)}$. From Proposition \ref{p:compactbasis}, we know that $\cM=\cD(\id,M_{p})$ is a compact dilation cone. Thus combining Corollary \ref{c:flat-fb} and Lemma \ref{l:connectedness}, we deduce that either $\Tan_\Lambda(\omega^\pm,p) \subset \Fcal$ or $\Tan_\Lambda(\omega^\pm,p) \cap \Fcal = \emptyset$, where $\Fcal$ is as in \eqref{e:flat-meas}. 

    It therefore remains to rule out the possibility that $\Tan_\Lambda(\omega^\pm,p) \cap \Fcal = \emptyset$. 
    Indeed, for any $\omega \in \cD(\id,M_p)$ we know that there exists a ball $B \subset \Omega^{+}(u)$ with $\partial B \cap \partial \Omega = \{p_{0}\}$ at some point $p_0\in \{u=0\}$. Then, by Lemma \ref{l:twoplane}, $\Tan(\omega,p_{0}) \subset \cF$. On the other hand, since $p\in F^*$, Theorem \ref{t:tan2ltan} ensures that $\Tan(\omega,p_{0}) \subset \Tan_{\Lambda}(\omega^{\pm},p)$, thus implying $\Tan_{\Lambda}(\omega^{\pm},p) \cap \cF \neq \emptyset$ as desired.

Finally, we show $\dim_{\cH}(F^{*}) \le n-1$. We wish to use Lemma \ref{l:beta-decay-dim}. Recalling the definition of $\Theta_{\partial\Omega}$ and $\beta_{F^{*}}$ from \eqref{e:Theta} and \eqref{e:beta}, we claim that $\lim_{r \to 0} \Theta_{\partial \Omega}(p,r) = 0$ for all $p \in F^{*}$. Indeed, suppose not. Then there exists $p \in F^{*}$, $\delta > 0$, and a sequence $r_{i} \to 0$ so that
    \begin{equation} \label{e:ca1}
        \Theta_{\partial \Omega}(p,r_{i}) \ge \delta \qquad \forall i.
    \end{equation}
    By Theorem \ref{t:blowups} we know there exists a subsequence so that $\partial \Omega_{i}$ as in \eqref{e:domainseq} and $\omega^{\pm}_{i}$ as in \eqref{e:measureseq} converge simultaneously to some $\partial \Omega_{\infty}$ and $\omega_{\infty}$ respectively (in local Hausdorff distance and in the weak-$*$ topology respectively). Moreover, we know that $\spt \omega_{\infty} = \partial \Omega_{\infty}$. We already showed that since $p \in F^{*}$, it follows $\omega_{\infty} \in \cF$ so that $\spt \omega_{\infty} = \partial \Omega_{\infty} \in G(n-1,n)$. But starting from \eqref{e:ca1} this implies 
    $$
        \delta \le \Theta_{\partial \Omega}(p, r_{i}) = \Theta_{\partial \Omega_{i}}(0,1) \xrightarrow{i \to \infty} \Theta_{\partial \Omega_{\infty}}(0,1) = 0,
    $$
    yielding a contradiction.
    In particular, since $F^{*} \subset \partial \Omega$, it follows that for all $p \in F^{*}$ we have
    \begin{equation*} 
    0 \le \limsup_{r \to 0} \beta_{F^{*}}(p,r) \le \lim_{r \to 0} \Theta_{\partial \Omega}(p,r) = 0.
    \end{equation*}
    Now Lemma \ref{l:beta-decay-dim} implies $\dim_{\cH}(F^{*}) \le n-1$, verifying the final piece of (i).
\end{proof}

\bibliographystyle{alpha}
\bibdata{references}
\bibliography{references}

@article{azzam2023uniform,
  title={Uniform rectifiability, elliptic measure, square functions, and $\varepsilon$-approximability via an ACF monotonicity formula},
  author={Azzam, Jonas and Garnett, John and Mourgoglou, Mihalis and Tolsa, Xavier},
  journal={International Mathematics Research Notices},
  volume={2023},
  number={13},
  pages={10837--10941},
  year={2023},
  publisher={Oxford University Press}
}

@article{AzzMou,
  title={Tangent measures of elliptic measure and applications},
  author={Azzam, Jonas and Mourgoglou, Mihalis},
  journal={Analysis \& PDE},
  volume={12},
  number={8},
  pages={1891--1941},
  year={2019},
  publisher={Mathematical Sciences Publishers}
}

@book{EvansGariepy,
    AUTHOR = {Evans, Lawrence C. and Gariepy, Ronald F.},
     TITLE = {Measure theory and fine properties of functions},
    SERIES = {Textbooks in Mathematics},
   EDITION = {Revised},
 PUBLISHER = {CRC Press, Boca Raton, FL},
      YEAR = {2015},
     PAGES = {xiv+299},
      ISBN = {978-1-4822-4238-6},
   MRCLASS = {28-01},
  MRNUMBER = {3409135},
}

@article{kenig1999free,
  title={Free boundary regularity for harmonic measures and Poisson kernels},
  author={Kenig, Carlos E and Toro, Tatiana},
  journal={Annals of Mathematics},
  volume={150},
  number={2},
  pages={369--454},
  year={1999},
  publisher={JSTOR}
}

@article{kenig2006free,
 author = {Kenig, Carlos and Toro, Tatiana},
 title = {Free boundary regularity below the continuous threshold: 2-phase problems},
 fjournal = {Journal f{\"u}r die Reine und Angewandte Mathematik},
 journal = {J. Reine Angew. Math.},
 issn = {0075-4102},
 volume = {596},
 pages = {1--44},
 year = {2006},
 language = {English},
 doi = {10.1515/CRELLE.2006.050},
 keywords = {35R35,31C05,35B65},
 zbMATH = {5080503},
 Zbl = {1106.35147}
}

@book{kenig1994harmonic,
 author = {Kenig, Carlos E.},
 title = {Harmonic analysis techniques for second order elliptic boundary value problems: dedicated to the memory of {Professor} {Antoni} {Zygmund}},
 fseries = {Regional Conference Series in Mathematics},
 series = {Reg. Conf. Ser. Math.},
 issn = {0160-7642},
 volume = {83},
 isbn = {0-8218-0309-3},
 year = {1994},
 publisher = {Providence, RI: American Mathematical Society},
 language = {English},
 keywords = {35-02,42B20,35J25},
 zbMATH = {638941},
 Zbl = {0812.35001}
}

@article{KPT,
    AUTHOR = {Kenig, C. and Preiss, D. and Toro, T.},
     TITLE = {Boundary structure and size in terms of interior and exterior
              harmonic measures in higher dimensions},
   JOURNAL = {J. Amer. Math. Soc.},
  FJOURNAL = {Journal of the American Mathematical Society},
    VOLUME = {22},
      YEAR = {2009},
    NUMBER = {3},
     PAGES = {771--796},
      ISSN = {0894-0347,1088-6834},
   MRCLASS = {28A33 (28A75 31A15)},
  MRNUMBER = {2505300},
MRREVIEWER = {Vladimir\ Eiderman},
       DOI = {10.1090/S0894-0347-08-00601-2},
       URL = {https://doi.org/10.1090/S0894-0347-08-00601-2},
}

@article{bourgain1987hausdorff,
 author = {Bourgain, J.},
 title = {On the {Hausdorff} dimension of harmonic measure in higher dimension},
 fjournal = {Inventiones Mathematicae},
 journal = {Invent. Math.},
 issn = {0020-9910},
 volume = {87},
 pages = {477--483},
 year = {1987},
 language = {English},
 doi = {10.1007/BF01389238},
 keywords = {31B99},
 url = {https://eudml.org/doc/143431},
 zbMATH = {3998199},
 Zbl = {0616.31004}
}

@article{girela2018riesz,
  title={The Riesz transform and quantitative rectifiability for general Radon measures},
  author={Girela-Sarri{\'o}n, Daniel and Tolsa, Xavier},
  journal={Calculus of Variations and Partial Differential Equations},
  volume={57},
  number={1},
  pages={16},
  year={2018},
  publisher={Springer}
}

@article{perstneva2025good,
  title={Good elliptic operators on snowflakes},
  author={Perstneva, Polina},
  journal={International Mathematics Research Notices},
  volume={2025},
  number={6},
  pages={rnaf050},
  year={2025},
  publisher={Oxford University Press}
}

@article{david2021good,
  title={Good elliptic operators on Cantor sets},
  author={David, Guy and Mayboroda, Svitlana},
  journal={Advances in Mathematics},
  volume={383},
  pages={107687},
  year={2021},
  publisher={Elsevier}
}

@article{azzam2017mutual,
 author = {Azzam, Jonas and Mourgoglou, Mihalis and Tolsa, Xavier},
 title = {Mutual absolute continuity of interior and exterior harmonic measure implies rectifiability},
 fjournal = {Communications on Pure and Applied Mathematics},
 journal = {Commun. Pure Appl. Math.},
 issn = {0010-3640},
 volume = {70},
 number = {11},
 pages = {2121--2163},
 year = {2017},
 language = {English},
 doi = {10.1002/cpa.21687},
 keywords = {28A75},
 zbMATH = {6802189},
 Zbl = {1376.28002}
}

@incollection{wolff1995counterexamples,
 author = {Wolff, Thomas H.},
 title = {Counterexamples with harmonic gradients in {{\(\mathbb{R}^ 3\)}}},
 booktitle = {Essays on Fourier analysis in honor of Elias M. Stein. Proceedings of the Princeton conference in harmonic analysis held at Princeton University, Princeton, NJ, May 13-17, 1991 in honor of Elias M. Stein's sixtieth birthday},
 isbn = {0-691-08655-9},
 pages = {321--384},
 year = {1995},
 publisher = {Princeton, NJ: Princeton Univ. Press},
 language = {English},
 keywords = {31B15,30C85,31B20},
 zbMATH = {799747},
 Zbl = {0836.31004}
}

@incollection{bishop1992questions,
 author = {Bishop, Christopher J.},
 title = {Some questions concerning harmonic measure},
 booktitle = {Partial differential equations with minimal smoothness and applications. Proceedings of an IMA Participating Institutions (PI) conference, held Chicago, IL, USA, March 21-25, 1990},
 isbn = {0-387-97774-0},
 pages = {89--97},
 year = {1992},
 publisher = {New York etc.: Springer-Verlag},
 language = {English},
 keywords = {30C35,30C55},
 zbMATH = {233058},
 Zbl = {0792.30005}
}

@article{makarov1985distortion,
 author = {Makarov, N. G.},
 title = {On the distortion of boundary sets under conformal mappings},
 fjournal = {Proceedings of the London Mathematical Society. Third Series},
 journal = {Proc. Lond. Math. Soc. (3)},
 issn = {0024-6115},
 volume = {51},
 pages = {369--384},
 year = {1985},
 language = {English},
 doi = {10.1112/plms/s3-51.2.369},
 keywords = {30C85,31A15},
 zbMATH = {3914673},
 Zbl = {0573.30029}
}

@article{jones1988hausdorff,
 author = {Jones, Peter W. and Wolff, Thomas H.},
 title = {Hausdorff dimension of harmonic measures in the plane},
 fjournal = {Acta Mathematica},
 journal = {Acta Math.},
 issn = {0001-5962},
 volume = {161},
 number = {1-2},
 pages = {131--144},
 year = {1988},
 language = {English},
 doi = {10.1007/BF02392296},
 keywords = {30C85,31A15},
 zbMATH = {4091817},
 Zbl = {0667.30020}
}

@article{carleson1985support,
 author = {Carleson, Lennart},
 title = {On the support of harmonic measure for sets of {Cantor} type},
 fjournal = {Annales Academiae Scientiarum Fennicae. Series A I. Mathematica},
 journal = {Ann. Acad. Sci. Fenn., Ser. A I, Math.},
 issn = {0066-1953},
 volume = {10},
 pages = {113--123},
 year = {1985},
 language = {English},
 doi = {10.5186/aasfm.1985.1014},
 keywords = {31A15,28A75},
 zbMATH = {3953208},
 Zbl = {0593.31004}
}

@article{KT03-Poisson,
    AUTHOR = {Kenig, Carlos E. and Toro, Tatiana},
     TITLE = {Poisson kernel characterization of {R}eifenberg flat chord arc
              domains},
   JOURNAL = {Ann. Sci. \'Ecole Norm. Sup. (4)},
  FJOURNAL = {Annales Scientifiques de l'\'Ecole Normale Sup\'erieure.
              Quatri\`eme S\'erie},
    VOLUME = {36},
      YEAR = {2003},
    NUMBER = {3},
     PAGES = {323--401},
      ISSN = {0012-9593},
   MRCLASS = {31B25 (31A15 31B15 35R35)},
  MRNUMBER = {1977823},
MRREVIEWER = {Luca\ Lorenzi},
       DOI = {10.1016/S0012-9593(03)00012-0},
       URL = {https://doi.org/10.1016/S0012-9593(03)00012-0},
}

@article{AM,
    AUTHOR = {Andersson, J. and Mikayelyan, H.},
     TITLE = {The zero level set for a certain weak solution, with
              applications to the {B}ellman equations},
   JOURNAL = {Trans. Amer. Math. Soc.},
  FJOURNAL = {Transactions of the American Mathematical Society},
    VOLUME = {365},
      YEAR = {2013},
    NUMBER = {5},
     PAGES = {2297--2316},
      ISSN = {0002-9947,1088-6850},
   MRCLASS = {35R35 (35B65 35J60)},
  MRNUMBER = {3020099},
MRREVIEWER = {Alberto\ Valli},
       DOI = {10.1090/S0002-9947-2012-05593-0},
       URL = {https://doi.org/10.1090/S0002-9947-2012-05593-0},
}

@article{Allen-Kriv,
    AUTHOR = {Allen, Mark and Kriventsov, Dennis},
     TITLE = {A spiral interface with positive {A}lt-{C}affarelli-{F}riedman
              limit at the origin},
   JOURNAL = {Anal. PDE},
  FJOURNAL = {Analysis \& PDE},
    VOLUME = {13},
      YEAR = {2020},
    NUMBER = {1},
     PAGES = {201--214},
      ISSN = {2157-5045,1948-206X},
   MRCLASS = {35R35 (30C30 35J05)},
  MRNUMBER = {4047645},
MRREVIEWER = {Rodica\ Luca},
       DOI = {10.2140/apde.2020.13.201},
       URL = {https://doi.org/10.2140/apde.2020.13.201},
}

@article {Caff2-flat-implies-Lip,
    AUTHOR = {Caffarelli, Luis A.},
     TITLE = {A {H}arnack inequality approach to the regularity of free
              boundaries. {II}. {F}lat free boundaries are {L}ipschitz},
   JOURNAL = {Comm. Pure Appl. Math.},
  FJOURNAL = {Communications on Pure and Applied Mathematics},
    VOLUME = {42},
      YEAR = {1989},
    NUMBER = {1},
     PAGES = {55--78},
      ISSN = {0010-3640,1097-0312},
   MRCLASS = {35R35 (35B65 35J25 49A21)},
  MRNUMBER = {973745},
MRREVIEWER = {Erich\ Miersemann},
       DOI = {10.1002/cpa.3160420105},
       URL = {https://doi.org/10.1002/cpa.3160420105},
}

@article {Caff1-Lip-implies-reg,
    AUTHOR = {Caffarelli, Luis A.},
     TITLE = {A {H}arnack inequality approach to the regularity of free
              boundaries. {I}. {L}ipschitz free boundaries are
              {$C^{1,\alpha}$}},
   JOURNAL = {Rev. Mat. Iberoamericana},
  FJOURNAL = {Revista Matem\'atica Iberoamericana},
    VOLUME = {3},
      YEAR = {1987},
    NUMBER = {2},
     PAGES = {139--162},
      ISSN = {0213-2230},
   MRCLASS = {35R35 (35J05)},
  MRNUMBER = {990856},
MRREVIEWER = {Rolando\ Magnanini},
       DOI = {10.4171/RMI/47},
       URL = {https://doi.org/10.4171/RMI/47},
}

@article{Caff3,
    AUTHOR = {Caffarelli, Luis A.},
     TITLE = {A {H}arnack inequality approach to the regularity of free
              boundaries. {III}.\ {E}xistence theory, compactness, and
              dependence on {$X$}},
   JOURNAL = {Ann. Scuola Norm. Sup. Pisa Cl. Sci. (4)},
  FJOURNAL = {Annali della Scuola Normale Superiore di Pisa. Classe di
              Scienze. Serie IV},
    VOLUME = {15},
      YEAR = {1988},
    NUMBER = {4},
     PAGES = {583--602},
      ISSN = {0391-173X,2036-2145},
   MRCLASS = {35R35 (35B65 35J25 49J10)},
  MRNUMBER = {1029856},
MRREVIEWER = {Erich\ Miersemann},
       URL = {http://www.numdam.org/item?id=ASNSP_1988_4_15_4_583_0},
}

@article{DFS-2phase,
    AUTHOR = {De Silva, Daniela and Ferrari, Fausto and Salsa, Sandro},
     TITLE = {Two-phase problems with distributed sources: regularity of the
              free boundary},
   JOURNAL = {Anal. PDE},
  FJOURNAL = {Analysis \& PDE},
    VOLUME = {7},
      YEAR = {2014},
    NUMBER = {2},
     PAGES = {267--310},
      ISSN = {2157-5045,1948-206X},
   MRCLASS = {35R35 (35A09 35B65 35D40 35J05)},
  MRNUMBER = {3218810},
MRREVIEWER = {Erich\ Miersemann},
       DOI = {10.2140/apde.2014.7.267},
       URL = {https://doi.org/10.2140/apde.2014.7.267},
}

@article{Feldman,
    AUTHOR = {Feldman, Mikhail},
     TITLE = {Regularity for nonisotropic two-phase problems with
              {L}ipschitz free boundaries},
   JOURNAL = {Differential Integral Equations},
  FJOURNAL = {Differential and Integral Equations. An International Journal
              for Theory \& Applications},
    VOLUME = {10},
      YEAR = {1997},
    NUMBER = {6},
     PAGES = {1171--1179},
      ISSN = {0893-4983},
   MRCLASS = {35R35 (35J15 35R05 80A22)},
  MRNUMBER = {1608061},
MRREVIEWER = {Li\ Shang\ Jiang},
}

@misc{GTW-Lambda,
      title={Rectifiability and tangents in a rough Riemannian setting}, 
      author={Emily Casey and Max Goering and Tatiana Toro and Bobby Wilson},
      year={2025},
      eprint={2311.00589},
      archivePrefix={arXiv},
      primaryClass={math.AP},
      url={https://arxiv.org/abs/2311.00589}, 
}

@article {BET,
    AUTHOR = {Badger, Matthew and Engelstein, Max and Toro, Tatiana},
     TITLE = {Structure of sets which are well approximated by zero sets of
              harmonic polynomials},
   JOURNAL = {Anal. PDE},
  FJOURNAL = {Analysis \& PDE},
    VOLUME = {10},
      YEAR = {2017},
    NUMBER = {6},
     PAGES = {1455--1495},
      ISSN = {2157-5045,1948-206X},
   MRCLASS = {31A15 (28A75 33C55 35R35 49J52)},
  MRNUMBER = {3678494},
MRREVIEWER = {Jonas\ Aziz\ Azzam},
       DOI = {10.2140/apde.2017.10.1455},
       URL = {https://doi.org/10.2140/apde.2017.10.1455},
}

@article {CSS-2d,
    AUTHOR = {Caffarelli, L. and De Silva, D. and Savin, O.},
     TITLE = {Two-phase anisotropic free boundary problems and applications
              to the {B}ellman equation in 2{D}},
   JOURNAL = {Arch. Ration. Mech. Anal.},
  FJOURNAL = {Archive for Rational Mechanics and Analysis},
    VOLUME = {228},
      YEAR = {2018},
    NUMBER = {2},
     PAGES = {477--493},
      ISSN = {0003-9527,1432-0673},
   MRCLASS = {35R35 (35J60 49L20 76T10)},
  MRNUMBER = {3766982},
MRREVIEWER = {Alberto\ Valli},
       DOI = {10.1007/s00205-017-1198-9},
       URL = {https://doi.org/10.1007/s00205-017-1198-9},
}

@article {KLS1,
    AUTHOR = {Kim, Sunghan and Lee, Ki-Ahm and Shahgholian, Henrik},
     TITLE = {An elliptic free boundary arising from the jump of
              conductivity},
   JOURNAL = {Nonlinear Anal.},
  FJOURNAL = {Nonlinear Analysis. Theory, Methods \& Applications. An
              International Multidisciplinary Journal},
    VOLUME = {161},
      YEAR = {2017},
     PAGES = {1--29},
      ISSN = {0362-546X,1873-5215},
   MRCLASS = {35R35 (35D30 35J62)},
  MRNUMBER = {3672993},
MRREVIEWER = {H\'ector\ Chang-Lara},
       DOI = {10.1016/j.na.2017.05.010},
       URL = {https://doi.org/10.1016/j.na.2017.05.010},
}

@article {KLS2,
    AUTHOR = {Kim, Sunghan and Lee, Ki-Ahm and Shahgholian, Henrik},
     TITLE = {Nodal sets for ``broken'' quasilinear {PDE}s},
   JOURNAL = {Indiana Univ. Math. J.},
  FJOURNAL = {Indiana University Mathematics Journal},
    VOLUME = {68},
      YEAR = {2019},
    NUMBER = {4},
     PAGES = {1113--1148},
      ISSN = {0022-2518,1943-5258},
   MRCLASS = {35J62 (35B05)},
  MRNUMBER = {4001467},
MRREVIEWER = {Juli\'an\ Toledo},
       DOI = {10.1512/iumj.2019.68.7711},
       URL = {https://doi.org/10.1512/iumj.2019.68.7711},
}

@article{JK,
  author  = {Jerison, David S. and Kenig, Carlos E.},
  title   = {Boundary behavior of harmonic functions in non-tangentially accessible domains},
  journal = {Advances in Mathematics},
  year    = {1982},
  volume  = {46},
  pages   = {80--147},
  doi     = {10.1016/0001-8708(82)90055-X}
}

@article {GW,
    AUTHOR = {Gr\"uter, Michael and Widman, Kjell-Ove},
     TITLE = {The {G}reen function for uniformly elliptic equations},
   JOURNAL = {Manuscripta Math.},
  FJOURNAL = {Manuscripta Mathematica},
    VOLUME = {37},
      YEAR = {1982},
    NUMBER = {3},
     PAGES = {303--342},
      ISSN = {0025-2611,1432-1785},
   MRCLASS = {35J25 (31B35)},
  MRNUMBER = {657523},
MRREVIEWER = {V.\ M.\ Isakov},
       DOI = {10.1007/BF01166225},
       URL = {https://doi.org/10.1007/BF01166225},
}

@article {LSW,
    AUTHOR = {Littman, W. and Stampacchia, G. and Weinberger, H. F.},
     TITLE = {Regular points for elliptic equations with discontinuous
              coefficients},
   JOURNAL = {Ann. Scuola Norm. Sup. Pisa Cl. Sci. (3)},
  FJOURNAL = {Annali della Scuola Normale Superiore di Pisa. Classe di
              Scienze. Serie III},
    VOLUME = {17},
      YEAR = {1963},
     PAGES = {43--77},
      ISSN = {0391-173X},
   MRCLASS = {35.42},
  MRNUMBER = {161019},
MRREVIEWER = {C.\ B.\ Morrey, Jr.},
}

@article {TZ,
    AUTHOR = {Toro, Tatiana and Zhao, Zihui},
     TITLE = {Boundary rectifiability and elliptic operators with
              {$W^{1,1}$} coefficients},
   JOURNAL = {Adv. Calc. Var.},
  FJOURNAL = {Advances in Calculus of Variations},
    VOLUME = {14},
      YEAR = {2021},
    NUMBER = {1},
     PAGES = {37--62},
      ISSN = {1864-8258,1864-8266},
   MRCLASS = {35J25 (31B35 42B37)},
  MRNUMBER = {4193433},
       DOI = {10.1515/acv-2017-0044},
       URL = {https://doi.org/10.1515/acv-2017-0044},
}

@article {Wie,
    AUTHOR = {Wiener, Norbert},
     TITLE = {Discontinuous boundary conditions and the {D}irichlet problem},
   JOURNAL = {Trans. Amer. Math. Soc.},
  FJOURNAL = {Transactions of the American Mathematical Society},
    VOLUME = {25},
      YEAR = {1923},
    NUMBER = {3},
     PAGES = {307--314},
      ISSN = {0002-9947,1088-6850},
   MRCLASS = {35J25 (31B25)},
  MRNUMBER = {1501246},
       DOI = {10.2307/1989291},
       URL = {https://doi.org/10.2307/1989291},
}

@book{garnett2005harmonic,
 author = {Garnett, John B. and Marshall, Donald E.},
 title = {Harmonic measure},
 fseries = {New Mathematical Monographs},
 series = {New Math. Monogr.},
 volume = {2},
 isbn = {0-521-47018-8},
 year = {2005},
 publisher = {Cambridge: Cambridge University Press},
 language = {English},
 keywords = {31-02,31A15,30C85},
 zbMATH = {2174317},
 Zbl = {1077.31001}
}

@article {Preiss,
    AUTHOR = {Preiss, David},
     TITLE = {Geometry of measures in {${\bf R}^n$}: distribution,
              rectifiability, and densities},
   JOURNAL = {Ann. of Math. (2)},
  FJOURNAL = {Annals of Mathematics. Second Series},
    VOLUME = {125},
      YEAR = {1987},
    NUMBER = {3},
     PAGES = {537--643},
      ISSN = {0003-486X,1939-8980},
   MRCLASS = {28A75},
  MRNUMBER = {890162},
MRREVIEWER = {K.\ J.\ Falconer},
       DOI = {10.2307/1971410},
       URL = {https://doi.org/10.2307/1971410},
}

@book {Mattila,
    AUTHOR = {Mattila, Pertti},
     TITLE = {Geometry of sets and measures in {E}uclidean spaces},
    SERIES = {Cambridge Studies in Advanced Mathematics},
    VOLUME = {44},
      NOTE = {Fractals and rectifiability},
 PUBLISHER = {Cambridge University Press, Cambridge},
      YEAR = {1995},
     PAGES = {xii+343},
      ISBN = {0-521-46576-1; 0-521-65595-1},
   MRCLASS = {28A75 (49Q20)},
  MRNUMBER = {1333890},
MRREVIEWER = {Harold\ Parks},
       DOI = {10.1017/CBO9780511623813},
       URL = {https://doi.org/10.1017/CBO9780511623813},
}

@book{CaffSalsa,
    AUTHOR = {Caffarelli, Luis and Salsa, Sandro},
     TITLE = {A geometric approach to free boundary problems},
    SERIES = {Graduate Studies in Mathematics},
    VOLUME = {68},
 PUBLISHER = {American Mathematical Society, Providence, RI},
      YEAR = {2005},
     PAGES = {x+270},
      ISBN = {0-8218-3784-2},
   MRCLASS = {35R35 (35-01 35J25 35K20)},
  MRNUMBER = {2145284},
MRREVIEWER = {Luca\ Lorenzi},
       DOI = {10.1090/gsm/068},
       URL = {https://doi.org/10.1090/gsm/068},
}

@article{Wang-nonlinear-flat-implies-Lip,
author = {Pei-Yong Wang},
title = {REGULARITY OF FREE BOUNDARIES OF TWO-PHASE PROBLEMS FOR FULLY NONLINEAR ELLIPTIC EQUATIONS OF SECOND ORDER. II. FLAT FREE BOUNDARIES ARE LIPSCHITZ},
journal = {Communications in Partial Differential Equations},
volume = {27},
number = {7-8},
pages = {1497--1514},
year = {2002},
publisher = {Taylor \& Francis},
doi = {10.1081/PDE-120005846},
URL = {https://doi.org/10.1081/PDE-120005846},
eprint = {https://doi.org/10.1081/PDE-120005846}
}

@book{CaffCabre,
    AUTHOR = {Caffarelli, Luis A. and Cabr\'e, Xavier},
     TITLE = {Fully nonlinear elliptic equations},
    SERIES = {American Mathematical Society Colloquium Publications},
    VOLUME = {43},
 PUBLISHER = {American Mathematical Society, Providence, RI},
      YEAR = {1995},
     PAGES = {vi+104},
      ISBN = {0-8218-0437-5},
   MRCLASS = {35J60 (35-01 35B45 35B65 35Dxx)},
  MRNUMBER = {1351007},
MRREVIEWER = {P.\ Lindqvist},
       DOI = {10.1090/coll/043},
       URL = {https://doi.org/10.1090/coll/043},
}

@article{BB,
    AUTHOR = {Boas, H. P. and Boas, R. P.},
     TITLE = {Short proofs of three theorems on harmonic functions},
   JOURNAL = {Proc. Amer. Math. Soc.},
  FJOURNAL = {Proceedings of the American Mathematical Society},
    VOLUME = {102},
      YEAR = {1988},
    NUMBER = {4},
     PAGES = {906--908},
      ISSN = {0002-9939,1088-6826},
   MRCLASS = {31B05 (30D30)},
  MRNUMBER = {934865},
MRREVIEWER = {\"U.\ Kuran},
       DOI = {10.2307/2047332},
       URL = {https://doi.org/10.2307/2047332},
}

@article{DeSilva,
    AUTHOR = {De Silva, D.},
     TITLE = {Free boundary regularity for a problem with right hand side},
   JOURNAL = {Interfaces Free Bound.},
  FJOURNAL = {Interfaces and Free Boundaries. Mathematical Analysis,
              Computation and Applications},
    VOLUME = {13},
      YEAR = {2011},
    NUMBER = {2},
     PAGES = {223--238},
      ISSN = {1463-9963,1463-9971},
   MRCLASS = {35R35 (35B45 35B65 35D40 35J15)},
  MRNUMBER = {2813524},
MRREVIEWER = {Alain\ Brillard},
       DOI = {10.4171/IFB/255},
       URL = {https://doi.org/10.4171/IFB/255},
}

@book{HKT,
    AUTHOR = {Heinonen, Juha and Kilpel\"ainen, Tero and Martio, Olli},
     TITLE = {Nonlinear potential theory of degenerate elliptic equations},
      NOTE = {Unabridged republication of the 1993 original},
 PUBLISHER = {Dover Publications, Inc., Mineola, NY},
      YEAR = {2006},
     PAGES = {xii+404},
      ISBN = {0-486-45050-3},
   MRCLASS = {31C45 (35J60 35J70)},
  MRNUMBER = {2305115},
MRREVIEWER = {Piotr\ Haj\l asz},
}

@article{LVV,
    AUTHOR = {Lewis, John L. and Verchota, Gregory C. and Vogel, Andrew L.},
     TITLE = {Wolff snowflakes},
   JOURNAL = {Pacific J. Math.},
  FJOURNAL = {Pacific Journal of Mathematics},
    VOLUME = {218},
      YEAR = {2005},
    NUMBER = {1},
     PAGES = {139--166},
      ISSN = {0030-8730,1945-5844},
   MRCLASS = {31B05 (28A80 31B15)},
  MRNUMBER = {2224593},
MRREVIEWER = {A.\ \`E.\ Djrbashian},
       DOI = {10.2140/pjm.2005.218.139},
       URL = {https://doi.org/10.2140/pjm.2005.218.139},
}

@article {TerrSoa,
    AUTHOR = {Soave, Nicola and Terracini, Susanna},
     TITLE = {An anisotropic monotonicity formula, with applications to some
              segregation problems},
   JOURNAL = {J. Eur. Math. Soc. (JEMS)},
  FJOURNAL = {Journal of the European Mathematical Society (JEMS)},
    VOLUME = {25},
      YEAR = {2023},
    NUMBER = {9},
     PAGES = {3727--3765},
      ISSN = {1435-9855,1435-9863},
   MRCLASS = {35R35},
  MRNUMBER = {4634682},
       DOI = {10.4171/jems/1270},
       URL = {https://doi.org/10.4171/jems/1270},
}

@article {Badger12,
    AUTHOR = {Badger, Matthew},
     TITLE = {Null sets of harmonic measure on {NTA} domains: {L}ipschitz
              approximation revisited},
   JOURNAL = {Math. Z.},
  FJOURNAL = {Mathematische Zeitschrift},
    VOLUME = {270},
      YEAR = {2012},
    NUMBER = {1-2},
     PAGES = {241--262},
      ISSN = {0025-5874,1432-1823},
   MRCLASS = {31B15 (28A75 31B05)},
  MRNUMBER = {2875832},
MRREVIEWER = {Leonid\ V.\ Kovalev},
       DOI = {10.1007/s00209-010-0795-1},
       URL = {https://doi.org/10.1007/s00209-010-0795-1},
}

@article{azzam2019two,
  title={On a two-phase problem for harmonic measure in general domains},
  author={Azzam, Jonas and Mourgoglou, Mihalis and Tolsa, Xavier and Volberg, Alexander},
  journal={American Journal of Mathematics},
  volume={141},
  number={5},
  pages={1259--1279},
  year={2019},
  publisher={Johns Hopkins University Press}
}

@article {AMT-counterex,
    AUTHOR = {Azzam, Jonas and Mourgoglou, Mihalis and Tolsa, Xavier},
     TITLE = {Singular sets for harmonic measure on locally flat domains
              with locally finite surface measure},
   JOURNAL = {Int. Math. Res. Not. IMRN},
  FJOURNAL = {International Mathematics Research Notices. IMRN},
      YEAR = {2017},
    VOLUME = {2017},
    NUMBER = {12},
     PAGES = {3751--3773},
      ISSN = {1073-7928,1687-0247},
   MRCLASS = {31C15},
  MRNUMBER = {3693664},
MRREVIEWER = {Jana\ Bj\"orn},
}

@article{Ferrari-Salsa,
title = {Regularity of the free boundary in two-phase problems for linear elliptic operators},
journal = {Advances in Mathematics},
volume = {214},
number = {1},
pages = {288-322},
year = {2007},
issn = {0001-8708},
doi = {https://doi.org/10.1016/j.aim.2007.02.004},
url = {https://www.sciencedirect.com/science/article/pii/S0001870807000515},
author = {Fausto Ferrari and Sandro Salsa},
}

@article{Cerutti-Ferrari-Salsa,
	author = {Cerutti, M. Cristina and Ferrari, Fausto and Salsa, Sandro},
	doi = {10.1007/s00205-003-0290-5},
	id = {Cerutti2004},
	isbn = {1432-0673},
	journal = {Archive for Rational Mechanics and Analysis},
	number = {3},
	pages = {329--348},
	title = {Two-Phase Problems for Linear Elliptic Operators with Variable Coefficients: Lipschitz Free Boundaries are $C^{1,\gamma}$},
	url = {https://doi.org/10.1007/s00205-003-0290-5},
	volume = {171},
	year = {2004},
    }

@article{HofmannKim,
	author = {Hofmann, Steve and Kim, Seick},
	journal = {manuscripta mathematica},
	number = {2},
	pages = {139--172},
	title = {The Green function estimates for strongly elliptic systems of second order},
	volume = {124},
	year = {2007}}

@misc{MMP-layer-ptls,
      title={Layer potentials for elliptic operators with DMO-type coefficients: big pieces $Tb$ theorem, quantitative rectifiability, and free boundary problems}, 
      author={Andrea Merlo and Mihalis Mourgoglou and Carmelo Puliatti},
      year={2025},
      eprint={2505.23478},
      archivePrefix={arXiv},
      primaryClass={math.AP},
      url={https://arxiv.org/abs/2505.23478}, 
}

\end{document}